\theoremstyle{plain}
\newtheorem{theorem}{Theorem}[section]
\newtheorem{proposition}[theorem]{Proposition}
\newtheorem{lemma}[theorem]{Lemma}
\newtheorem{corollary}[theorem]{Corollary}
\theoremstyle{definition}
\theoremstyle{remark}
\DeclareFontFamily{U}{ntxmia}{}
\DeclareFontShape{U}{ntxmia}{m}{it}{<-> ntxmia }{}
\DeclareFontShape{U}{ntxmia}{b}{it}{<-> ntxbmia }{}
\DeclareSymbolFont{lettersA}{U}{ntxmia}{m}{it}
\NewDocumentCommand{\varmathbb}{m}
 {
  \tl_map_inline:nn { #1 }
  {
    \use:c { varbb##1 }
  }
 }
\DeclareMathSymbol{varbb#1}{\mathord}{lettersA}{\int_eval:n { `#1+67 }}
\DeclareMathSymbol{varbbk}{\mathord}{lettersA}{169}
\newcommand{\bv}{\mathbf{v}}
\newcommand{\ve}{\mathcal{I}}
\newcommand{\RR}{\mathbb{R}}
\newcommand{\expec}{\mathbb{E}}
\newcommand{\mvar}{\mathrm{Var}_M}
\newcommand{\tr}{{\mathrm{tr}}} 
\newcommand{\prox}{\mathrm{Prox}}
\newcommand{\Proj}{{\mathrm{Proj}}}
\newcommand{\range}{\mathrm{range}\,} 
\DeclareMathOperator*{\argmin}{argmin}
\newcommand{\dist}{\mathrm{dist}}
\newcommand{\reals}{\mathbb{R}}
\newcommand{\opC}{{\varmathbb{C}}}
\newcommand{\opG}{{\varmathbb{G}}}
\newcommand{\opH}{{\varmathbb{H}}}
\newcommand{\opI}{{\varmathbb{I}}}
\newcommand{\opN}{{\varmathbb{N}}}
\newcommand{\opS}{{\varmathbb{S}}}
\newcommand{\opT}{{\varmathbb{T}}}
\newcommand{\vg}{{\mathbf{g}}}
\newcommand{\vu}{{\mathbf{u}}}
\newcommand{\vv}{{\mathbf{v}}}
\newcommand{\vw}{{\mathbf{w}}}
\newcommand{\vx}{{\mathbf{x}}}
\newcommand{\vy}{{\mathbf{y}}}
\newcommand{\vC}{{\mathbf{C}}}
\newcommand{\cF}{{\mathcal{F}}}
\newcommand{\cH}{{\mathcal{H}}}
\newcommand{\bigO}{\mathcal{O}}
\newcommand\norm[1]{\left\lVert #1 \right\rVert}
\newcommand\Mnorm[1]{\left\lVert #1 \right\rVert_M}
\journal{Journal of Mathematical Analysis and Applications}
\begin{document}

\begin{frontmatter}



\title{Coordinate-Update Algorithms can Efficiently Detect Infeasible Optimization Problems}


\author{Jinhee Paeng\fnref{label1}}
\ead{jhpaeng@stanford.edu}

\author{Jisun Park\fnref{label2}}
\ead{jisunpark@princeton.edu}

\author{Ernest K. Ryu\corref{cor1}\fnref{label3}}
\ead{eryu@math.ucla.edu}

\cortext[cor1]{Corresponding author.}

\affiliation[label1]{organization={Institute for Computational and Mathematical Engineering, Stanford University},
            addressline={\\475 Via Ortega}, 
            city={Stanford},
            postcode={94305}, 
            state={CA},
            country={United States}}

\affiliation[label2]{organization={Department of Operations Research and Financial Engineering, Princeton University},
            addressline={\\98 Charlton street}, 
            city={Princeton},
            postcode={08544}, 
            state={NJ},
            country={United States}}

\affiliation[label3]{organization={Department of Mathematics, University of California, Los Angeles},
            addressline={\\520 Portola Plaza}, 
            city={Los Angeles},
            postcode={90095}, 
            state={CA},
            country={United States}}

\begin{abstract}
Coordinate update/descent algorithms are widely used in large-scale optimization due to their low per-iteration cost and scalability, but their behavior on infeasible or misspecified problems has not been much studied compared to the algorithms that use full updates. For coordinate-update methods to be as widely adopted to the extent so that they can be used as engines of general-purpose solvers, it is necessary to also understand their behavior under pathological problem instances. In this work, we show that the normalized iterates of randomized coordinate-update fixed-point iterations (RC-FPI) converge to the infimal displacement vector and use this result to design an efficient infeasibility detection method. We then extend the analysis to the setup where the coordinates are defined by non-orthonormal basis using the Friedrichs angle and then apply the machinery to decentralized optimization problems.
\end{abstract}



\begin{keyword}
convex optimization \sep monotone operator theory \sep fixed-point iterations



\end{keyword}

\end{frontmatter}



\section{Introduction}\label{Introduction}
Coordinate update/descent algorithms are widely used in large-scale optimization due to their low per-iteration cost and scalability. These algorithms update only a single block of coordinates of an optimization variable per iteration in contrast to full or stochastic gradient algorithms, which update all variables every iteration. The convergence of coordinate update algorithms has been analyzed extensively, and they have been shown to achieve strong practical and theoretical performance in many large-scale machine learning and optimization problems \citep{nesterov2012efficiency} for non-pathological problem instances.

However, the behavior of coordinate update algorithms on infeasible or misspecified problems has not been analyzed, which sharply contrasts with algorithms that use full (deterministic) updates. The recent interest in building general-purpose optimization solvers with first-order algorithms has led to much work analyzing the behavior of full-update first-order algorithms on pathological problem instances so that the solvers can robustly detect such instances. For coordinate-update methods to be as widely adopted, to the extent that they can be used as engines of general-purpose solvers, it is necessary to also understand their behavior under pathological problem instances.

\subsection{Summary of results, contribution, and organization}
In this work, we analyze the behavior of randomized coordinate-update fixed-point iterations (RC-FPI) applied to inconsistent problem instances. Analogous to the classical results of the full-update fixed-point iterations, we show that the normalized iterate ${x^k}/{k}$ generated by RC-FPI converges toward the infimal displacement vector, which serves as a certificate of infeasibility, in the sense of both $L^2$ and almost sure convergence. We then bound the asymptotic bias and variance of the estimator, thereby establishing an asymptotic convergence rate. Finally, we extend the analysis to the setup where the coordinates are defined by non-orthonormal basis using the Friedrichs angle and then apply the machinery to decentralized optimization problems.

The paper is organized as follows. \cref{sec:pre} sets up notations and reviews known results and notions. \cref{sec:rcupdate} provides clear definition of randomized-coordinate update setting. \cref{sec:linearrate} presents the $L^2$ and almost sure convergence of the normalized iterate. \cref{sec:variance} provides the asymptotic upper bound for the normalized iterate. \cref{sec:infeas_detect} then uses these results to build the infeasibility detection in \eqref{RC-FPI}. \cref{sec:decent_opt} extends our result to the non-orthogonal basis, allowing application to the optimization methods such as \eqref{iter:PG-EXTRA}. \cref{sec:conclusion} concludes the paper.

\subsection{Prior work}

\subsubsection{FPI of inconsistent case.} 
Behavior of the inconsistent fixed-point iteration has been first characterized by \citet{Browder1966}, who showed that the iterates are not bounded.
Later, \citet{Pazy1971AsymptoticBO} showed that the iterates actually diverge in a sense that $\lim_{k\to\infty} \frac{\opT^k x^0}{k} = -\bv $, and this work also led to the similar results in more general Banach space settings \citep{reich1973asymptotic,Reich1981asymptotic,Reich1982asymptotic,PlantReich1983asymptotics} or geodesic spaces \citep{ariza2014firmly,Nicolae2013asymptotic}.
If the operator is more than just non-expansive, then the difference of iterates is also convergent to $\bv$; see \citet{bruck1977weak,bailion1978asymptotic,ReichSharfrir1987}.

There are also in-depth analyses on the characteristics of infimal displacement vector, regarding its direction
\citep{BauschkeDouglasMoursi2016,Ryu2018,Gutierrez2021comments} and the composition and convex combinations of non-expansive operators \citep{BauschkeMoursi2018magnitude,BauschkeMoursi2020minimal}.

\subsubsection{Infeasibility detection and numerical solvers.}
Fixed-point iteration covers a broad range of optimization algorithms, including Douglas-Rachford splitting (DRS) \citep{lions1979splitting} or alternating direction method of multipliers (ADMM) \citep{gabay1976dual,Glowinski1975admm}, which are commonly used as first-order methods for solving general convex optimization problems.
The infimal displacement vector of DRS and ADMM operator have been recently studied \citep{BauschkeHareMoursi2014,BauschkeDaoMoursi2016douglas,BauschkeMoursi2016douglas,BauschkeMoursi2020,BauschkeMoursi2021,Moursi2022douglas}, and it was proven to have meaning in terms of primal and dual problems as well \citep{BanjacLygeros2021asymptotic,Banjac2021}.
Related to such behaviors, ADMM-based infeasibility detecting algorithms have been suggested \citep{banjac2019infeasibility,liu2019new,ryu2019douglas}, which led to the first-order numerical solvers like OSQP \citep{osqp} and COSMO \citep{Garstka_2021}.
Apart from above, SCS \citep{scs,sopasakis2019superscs} uses homogeneous self-dual embedding \citep{o2016conic,o2021operator}.


\subsubsection{Randomized coordinate update and RC-FPI.} 

{

The randomized coordinate update method is an iterative method that updates only a subset of randomly chosen coordinates or blocks of coordinates at each iteration.
Coordinate descent \cite{wright2015coordinate} is a coordinate-update gradient method \citep{hildreth1957quadratic,d1959convex,warga1963minimizing,garkavi1980method,luo1992convergence,grippo2000convergence,tseng2001convergence}, and such method is also popular in proximal setup \citep{auslender1992asymptotic,razaviyayn2013unified,xu2013block}, prox-linear setup \citep{tseng2009coordinate,yun2011coordinate,nesterov2012efficiency,beck2013convergence,shi2014sparse,xu2015alternating,zhou2016global,xu2017globally,hong2017iteration}, distributed (or asynchronous) optimization setup \citep{fercoq2014fast,liu2014asynchronous,liu2015asynchronous,peng2016arock}, and even in discrete optimization \citep{farsa2020discrete,hazimeh2020fast,jager2020blockwise}.
Furthermore, there are attempts to hybrid coordinate update with full update in primal-dual algorithms
\citep{ChambolleEhrhardtRichtarikSchonlieb2018_stochastic, FercoqBianchi2019_coordinatedescent}

Regarding how the coordinates and blocks to update are chosen, cyclic coordinate-update \cite{oswald2017random,lee2019random,wright2020analyzing,gurbuzbalaban2020randomness} has gained attention due to its good practical performance.
There are in-depth complexity analysis and accelerated variants of coordinate descent method as well \citep{lin2014accelerated,richtarik2014iteration,fercoq2015accelerated,allen2016even,qu2016coordinate,nesterov2017efficiency,hong2017iteration,hanzely2019accelerated,bertrand2021anderson,alacaoglu2022complexity,nutini2022let,cai2023cyclic}.

Randomized coordinate-update for fixed-point iteration has been first proposed by \citet{verkama1996random}.
General framework for randomized block-coordinate fixed-point iteration was suggested by \citet{combettes2015stochastic,combettes2019stochastic}, followed by similar line of works including block-coordinate update fixed-point iteration in asynchronous parallel setup \citep{peng2016arock}, forward-backward splitting \citep{chouzenoux2016block,salzo2022parallel}, Douglas-Rachford splitting \citep{briceno2019random}, and so on.
It also led to the refined analysis in cyclic fixed-point iterations \citep{chow2017cyclic,peng2019cyclic}, and the iteration complexity of coordinate update fixed-point iterations and their variants \citep{lu2015complexity,combettes2018linear,sun2021worst,tran2023accelerated}.

}

\subsubsection{Friedrichs angle and splitting methods.}

Friedrichs angle \citep{Friedrichs1937OnCI,deutsch1985rate,deutsch1995angle} measures an angle between a number of subspaces,
and is often used to characterize the convergence rate of projection methods.
\citep{aronszajn1950theory,kayalar1988error,bauschke1996projection,badea2010generalization,nishihara2014convergence,falt2017optimal,reich2017optimal,aragon2018new,aragon2019optimal,falt2021generalized}.
This kind of approach has been extended to cover splitting methods such as DRS and ADMM as well \citep{bauschke2014rate,raghunathan2014admm,bauschke2016optimal,liang2017local,banjac2018tight,behling2018circumcentering,liang2018local}.

\section{Preliminaries and notations} \label{sec:pre}
In this section, we set up notations and review known results. First, let's clarify the underlying space. Throughout this paper, a Hilbert space refers to a real Hilbert space. The underlying space is a real Hilbert space $\cH$, which is consisted of $m$ real Hilbert spaces.
\[
\cH = \cH_1 \oplus \cH_2 \oplus \dots \cH_m.
\]
An element $u \in \cH$ can be decomposed into $m$ blocks as
\[
u=\left( u_1, u_2, \dots , u_m\right), \quad u_i \in \cH_i,
\]
and $u_i$ is called the $i$th block coordinates of $u$.
  
The Hilbert space $\cH$ has its induced norm and inner product as
\[
\|x\|^2=\sum_{i=1}^m \|x_i\|_i^2,
\qquad
\langle x,y\rangle=\sum_{i=1}^m \langle x_i,y_i\rangle_i,
\]
for all $x, y \in \cH$, where $\| \cdot\|_i$ and $\langle \cdot, \cdot\rangle_i$ are the norm and inner product of $\cH_i$ and $x_i, y_i$ are $i$th block coordinates of $x, y$, respectively. 

Consider a linear, bounded, self-adjoint and positive definite operator $M:\cH \to \cH$. The $M$-norm and $M$-inner product of $\cH$ are defined as
\[
\|x\|_M=\sqrt{\langle x, Mx \rangle},
\qquad
\langle x,y\rangle_M=\langle x, My \rangle,
\]
which can also be a pair of norm and inner product of the space $\cH$. $\| \cdot\|$ and $\langle \cdot, \cdot\rangle$ are simply the instances of $M$-norm and $M$-inner product with $M$ as an identity map. For the remark, the map $M$ can be expressed as a symmetric positive definite matrix if $\cH = \RR^n$. In this case, $M$-inner product and $M$-norm are 
\[
\|x\|_M=\sqrt{x^T M x},
\qquad
\langle x,y\rangle_M=x^T M y.
\]

Define the $M$-variance of a random variable $X$ with the domain $\cH$ as
\[
    \mvar [X] = \expec [\|X\|^2_M] - \| \expec [X] \|^2_M.
\]
We develop the theory of Sections~\ref{sec:linearrate} and \ref{sec:variance} with the general $M$-norm so that the theory is applicable to the applications of Section~\ref{sec:decent_opt}.

\subsection{Operators}
Denote $\opI\colon\cH\to\cH$ as the identity operator.
For an operator $\opT\colon\cH\to\cH$, let $\range \opT$ be a {range} of $\opT$.
If $x_\star\in\cH$ is a point such that $x_\star = \opT x_\star$, it is called a fixed point of $\opT$.

We say an operator $\opT\colon\cH \to \cH$ is {non-expansive} with respect to $\Mnorm{\cdot}$ if
\[
\Mnorm{\opT x - \opT y} \leq \Mnorm{x-y}, \qquad \forall x, y \in \cH,
\]
and is $\theta$-{averaged} for $\theta\in(0,1)$ if $\opT = (1-\theta) \opI + \theta \opC$ for some non-expansive operator $\opC$.
For notational convenience, we will refer to non-expansive operators as $\theta$-{averaged} operators with $\theta=1$, even though, strictly speaking, $\theta=1$ means the operator is not averaged.
An operator $\opS\colon\cH\to\cH$ is $({1}/{2})$-{cocoercive} with respect to ${\Mnorm{\cdot}}$ if
\[
\left< \opS x - \opS y,\, x - y \right>_M\geq \frac{1}{2} \Mnorm{\opS x - \opS y}^2, \qquad \forall x, y \in \cH.
\]

$\opT$ is non-expansive if and only if $\opS = \opI - \opT$ is $({1}/{2})$-{cocoercive}.
Also, $\opT$ is $\theta$-averaged for some $\theta\in(0,1)$ if and only if $\opS = \theta^{-1}{(\opI-\opT)}$ is $({1}/{2})$-{cocoercive}.



\subsection{Inconsistent operators and infimal displacement vector}
We say an operator $\opT\colon\cH\to\cH$ is consistent if it has a fixed point, and inconsistent if it does not have a fixed point.
$\opT$ is consistent if and only if $0\in\range(\opI-\opT)$.
When $\opT$ is non-expansive, the closure $\overline{\range \left(\opI-\opT\right)}$ is a nonempty closed convex set, so it has a unique minimum-norm element, which we denote by $\bv$ \citep{Pazy1971AsymptoticBO}.

We call $\bv$ the  \emph{infimal displacement vector} of $\opT$ \citep{BauschkeBorweinLewis1997,BauschkeHareMoursi2014}.
Alternatively, $\bv$ is the projection of $0$ onto $\overline{\range \left(\opI-\opT\right)}$. Equivalently, $\bv \in \overline{\range \left(\opI-\opT\right)}$ is the infimal displacement vector of $\opT$ if and only if
\begin{equation}
    \left< y - \bv,\, \bv \right>_M\geq 0, \qquad \forall y \in \range(\opI-\opT).
    \label{eq:inf-char}
\end{equation}

For a convex optimization problem, let $\opT$ be an operator corresponding to an iterative first-order method, such as the  Douglas-Rachford splitting (DRS) operator \citep{lions1979splitting}, and let $\bv$ be its infimal displacement vector. Loosely speaking, if the optimization problem is feasible and the problem is well-behaved, then $\bv=0$. (However, it is possible for ``weakly infeasible'' problems to have $\bv=0$, so $\bv=0$ does not guarantee feasibility.) On the other hand, $\bv\neq0$ implies that the problem or its dual problem is infeasible, so $\bv\ne 0$ serves as a certificate of infeasibility \citep{liu2019new,banjac2019infeasibility}.

\subsection{Fixed point iteration and normalized iterate}
The fixed-point iteration \eqref{FPI} with respect to an operator $\opT\colon\cH\to \cH$ is defined as
\begin{equation} \label{FPI} \tag{FPI}
    x^{k+1} = \opT x^{k}, \quad k=0,1,2,\dots,
\end{equation}
where $x^0\in\cH$ is a starting point.

Let $x^0,x^1,x^2,\ldots$ be the iterates of \eqref{FPI}.
We call ${x^k}/{k}$ the $k$th \emph{normalized iterate} of \eqref{FPI} for $k=1,2,\dots$. The seminal work of \citet{Pazy1971AsymptoticBO} characterizes the dynamics of normalized iterates of \eqref{FPI}.
\begin{theorem}[\citet{Pazy1971AsymptoticBO}]
\label{thm:pazy}
Let $\opT\colon\cH\to\cH$ be non-expansive.
Let $x^0,x^1,x^2,\ldots$ be the iterates of \eqref{FPI}.
    Then, the normalized iterate ${x^k}/{k}$ converges strongly,
    \begin{align*}
        \frac{x^k}{k} \rightarrow  -\bv
    \end{align*}
    as $k\to\infty$, 
    where $\bv$ is the infimal displacement vector of $\opT$.
\end{theorem}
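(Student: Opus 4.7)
The plan is to decouple the statement into a norm-convergence step, $\Mnorm{x^k/k} \to \Mnorm{\bv}$, and a direction step that upgrades norm convergence to strong convergence; the latter hinges on the characterization \eqref{eq:inf-char}. The central object is the averaged displacement
\[
y_k := \frac{x^0 - x^k}{k} = \frac{1}{k}\sum_{j=0}^{k-1}(\opI - \opT)x^j,
\]
which lies in the convex hull of $\range(\opI - \opT)$ and whose convergence $y_k \to \bv$ is equivalent to $x^k/k \to -\bv$, since $x^0/k \to 0$.

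For the lower bound, \eqref{eq:inf-char} rewrites as $\langle y, \bv\rangle_M \geq \Mnorm{\bv}^2$ for every $y \in \range(\opI - \opT)$, and this inequality passes through the Ces\`aro average to give $\langle y_k, \bv\rangle_M \geq \Mnorm{\bv}^2$; Cauchy--Schwarz in the $M$-inner product then yields $\Mnorm{y_k} \geq \Mnorm{\bv}$ for all $k$.

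For the matching upper bound, fix $\epsilon > 0$ and pick $z \in \cH$ with $\Mnorm{z - \opT z} < \Mnorm{\bv} + \epsilon$, which is possible because $\bv$ lies in the closure of $\range(\opI - \opT)$. Run a shadow iteration $z^0 := z$, $z^{k+1} := \opT z^k$. Non-expansivity delivers two inequalities: $\Mnorm{z^{k+1} - z^k} \leq \Mnorm{z - \opT z}$ (so $\Mnorm{z^k - z^0} < k(\Mnorm{\bv}+\epsilon)$ after telescoping) and $\Mnorm{x^k - z^k} \leq \Mnorm{x^0 - z^0}$ uniformly in $k$. A triangle-inequality sandwich then gives $\Mnorm{y_k} \leq \Mnorm{\bv} + \epsilon + 2\Mnorm{x^0 - z^0}/k$, so $\limsup_k \Mnorm{y_k} \leq \Mnorm{\bv} + \epsilon$. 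Sending $\epsilon \downarrow 0$ completes $\Mnorm{y_k} \to \Mnorm{\bv}$.

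To upgrade to strong convergence, expand
\[
\Mnorm{y_k - \bv}^2 = \Mnorm{y_k}^2 - 2\langle y_k, \bv\rangle_M + \Mnorm{\bv}^2 \leq \Mnorm{y_k}^2 - \Mnorm{\bv}^2,
\]
using $\langle y_k, \bv\rangle_M \geq \Mnorm{\bv}^2$ from the lower-bound step. The right-hand side tends to $0$ by the norm convergence just established, so $y_k \to \bv$ strongly, and hence $x^k/k \to -\bv$. I expect the shadow-iteration step to be the main obstacle: it relies simultaneously on the density of $\range(\opI - \opT)$ in its closure (to exhibit near-infimal displacements $z - \opT z$) and on the boundedness in $k$ of the $M$-distance between two orbits of a non-expansive map, and together these are what pin $\Mnorm{y_k}$ down to its floor $\Mnorm{\bv}$.
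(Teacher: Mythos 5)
Your proof is correct and complete: the Ces\`aro average $y_k=(x^0-x^k)/k$ of the displacements, the lower bound $\langle y_k,\bv\rangle_M\ge\Mnorm{\bv}^2$ from \eqref{eq:inf-char}, the shadow orbit started from a near-infimal displacement combined with non-expansivity to get the matching upper bound on $\limsup_k\Mnorm{y_k}$, and the final identity $\Mnorm{y_k-\bv}^2\le\Mnorm{y_k}^2-\Mnorm{\bv}^2$ all check out. The paper states this result as a citation to Pazy and gives no proof of its own, so there is nothing to compare against; your argument is the standard one for this classical theorem.
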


Since the underlying space is a Hilbert space, we clarify that the convergence in the space $\cH$ throughout this paper refers to the strong convergence of Hilbert space.


\section{Randomized-coordinate update setup}
\label{sec:rcupdate}

In this section, we focus on a variant of \eqref{FPI} with randomized coordinate updates. 
Consider a {$\theta$-averaged} operator $\opT\colon\cH\to \cH$ with its corresponding {$({1}/{2})$-{cocoercive}} operator $\opS=\theta^{-1}{(\opI-\opT)}$ with $\theta\in(0,1]$.
To clarify, we will refer to non-expansive operators as $\theta$-{averaged} operators with $\theta=1$.
Define $\opS_i\colon\cH\to \cH$ for $i=1, 2, \dots, m$ as $\opS_i x=(0,\dots,0,(\opS x)_i,0,\dots,0)$, where $(\opS x)_i \in \cH_i$.

We call $\ve=\left(\ve_1, \ve_2, \dots , \ve_m \right) \in \left[ 0,1\right]^m\subset \RR^m$ a \emph{selection vector} and use it as follows.
Define $\opS_\ve\colon\cH\to\cH$ and $\opT_\ve\colon\cH\to\cH$ as 
\[
\opS_\ve = \sum_{i=1}^{m} \ve_i \opS_i ,
\qquad
\opT_\ve = \opI - \theta \opS_\ve.
\]
We can think of $\opS_\ve$ as the selection of blocks based on $\ve$ and $\opT_\ve$ as the update based on the selected blocks.
The randomized coordinate fixed-point iteration \eqref{RC-FPI} is defined as
\begin{equation} \label{RC-FPI} \tag{RC-FPI}
    x^{k+1} = \opT_{\ve^k} x^k, \qquad k=0,1,2,\dots,
\end{equation}
where $\ve^0, \ve^1, \ldots$ is sampled IID and $x^0\in\cH$ is a starting point.
\eqref{RC-FPI} is a randomized variant of \eqref{FPI}.

{
Throughout this paper, we assume that $\ve$ is randomly sampled from a distribution on $\left[ 0,1\right]^m$ that satisfies the \emph{uniform expected step-size condition}
\begin{equation} \label{condition:P} 
    \expec_{\ve  } \left[ \ve \right] =\alpha\mathbf{1}
\end{equation}
for some $\alpha \in (0,1]$, where $\mathbf{1}\in \mathbb{R}^m$ is the vector with all entries equal to $1$.
(Note, $\ve \in \left[ 0,1\right]^m$ already implies $\alpha\in [0,1]$. We are additionally assuming that $\alpha>0$.)
}

The uniform expected step-size condition \eqref{condition:P} allows us to view one step of \eqref{RC-FPI} to be corresponding to a step of \eqref{FPI} with $\Bar{\opT}\colon \cH\to\cH$  defined as
\[
\Bar{\opT}x= \expec_{\ve } \left[ \opT_\ve x \right], \qquad \forall x \in \cH.
\]
Equivalently, $\Bar{\opT} =\opI - \alpha \theta \opS$.

{
For any $u\in \cH$ and selection vector $\ve$, define
\[
u_\ve =
\sum_{i=1}^m 
\underbrace{\ve_i }_{\in \mathbb{R}}
\underbrace{
(0,\dots,0,u_i,0,\dots,0)}_{\in \cH},
\]
where $u_i\in \cH_i$ for $i=1,\dots,m$.
If $\ve$ satisfies the uniform expected step-size condition \eqref{condition:P} with $\alpha\in (0,1]$, then clearly $\expec_{\ve} \left[  u_\ve \right] = \alpha u$.
Let $\beta>0$ be a coefficient such that
\begin{equation}
 \expec_{\ve} \left[  \Mnorm{u_\ve}^2 \right] \leq \beta \Mnorm{u}^2,
 \qquad\forall\,u\in \cH.
 \label{eq:beta-def}
\end{equation}

\begin{lemma} \label{lem:expec_norm}
Consider a Hilbert space $\cH$ with its norm $\|\cdot\|$.
If $\ve$ satisfies the uniform expected step-size condition \eqref{condition:P} with $\alpha\in (0,1]$, then $\beta=\alpha$ satisfies \eqref{eq:beta-def}.
\end{lemma}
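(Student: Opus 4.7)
The plan is to compute $\Mnorm{u_\ve}^2$ directly for the unweighted norm $\|\cdot\|$ and then take the expectation, exploiting the fact that the different blocks in the direct-sum decomposition $\cH = \cH_1 \oplus \cdots \oplus \cH_m$ are orthogonal and that $\ve_i \in [0,1]$ forces $\ve_i^2 \le \ve_i$.

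First I would write
\[
u_\ve = \sum_{i=1}^m \ve_i (0,\dots,0,u_i,0,\dots,0),
\]
and observe that the $m$ summands live in pairwise orthogonal subspaces of $\cH$. Hence, using the norm identity $\|x\|^2 = \sum_i \|x_i\|_i^2$ from the preliminaries, the cross terms vanish and
\[
\|u_\ve\|^2 = \sum_{i=1}^m \ve_i^2 \, \|u_i\|_i^2.
\]
Taking expectation over $\ve$ and pulling the deterministic $\|u_i\|_i^2$ out of the expectation gives
\[
\expec_\ve \!\left[\|u_\ve\|^2\right] = \sum_{i=1}^m \expec_\ve[\ve_i^2]\, \|u_i\|_i^2.
\]

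Next I would bound $\expec_\ve[\ve_i^2]$. Since $\ve_i \in [0,1]$, we have $\ve_i^2 \le \ve_i$ pointwise, so $\expec_\ve[\ve_i^2] \le \expec_\ve[\ve_i] = \alpha$ by the uniform expected step-size condition \eqref{condition:P}. Substituting back yields
\[
\expec_\ve\!\left[\|u_\ve\|^2\right] \le \alpha \sum_{i=1}^m \|u_i\|_i^2 = \alpha \|u\|^2,
\]
which is exactly \eqref{eq:beta-def} with $\beta = \alpha$.

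There is no real obstacle here; the only thing to be careful about is that the argument uses the unweighted norm $\|\cdot\|$ essentially, because orthogonality of the blocks is what kills the cross terms. For a general $M$-norm the blocks need not be $M$-orthogonal, so an analogous identity would involve $\langle u_i, M u_j\rangle$ cross terms and the simple bound $\expec[\ve_i\ve_j]\le\alpha$ would no longer suffice; this is presumably why the lemma is stated specifically for $M = \opI$.
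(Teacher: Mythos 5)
Your proof is correct and follows essentially the same route as the paper's: expand $\|u_\ve\|^2$ blockwise using orthogonality of the $\cH_i$, bound $\ve_i^2\le\ve_i$ since $\ve_i\in[0,1]$, and apply \eqref{condition:P} to get the factor $\alpha$. The closing remark about why the argument is tied to $M=\opI$ is accurate and consistent with the paper's later use of the Friedrichs angle to handle general $M$-norms.
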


\begin{proof}
Since $\ve_i \in \left[0,1 \right]$,
\begin{align*}
     \expec_{\ve  } \left[ \norm{ \sum_{i=1}^m \ve_i (0,\dots,0,u_i,0,\dots,0)}^2 \right] 
 &= \expec_{\ve  } \left[ \sum_{i=1}^m  \norm{\ve_i u_i}^2_i \right] \\
 & \leq \expec_{\ve  } \left[ \sum_{i=1}^m \ve_i \norm{ u_i}^2_i \right]
 =  \sum_{i=1}^m \alpha \norm{ u_i}^2 = \alpha \norm{u}^2.
\end{align*}
Thus, choose $\alpha=\beta$ to satisfy the inequality.
\end{proof}

As one can see from \cref{eq:beta-def}, choosing large $\beta$ is not a problem. The question is to choose the smallest $\beta$ for a given setting. Note that it is impossible to choose $\beta$ smaller than $\alpha^2$ since  $\beta \geq \alpha^2$ due to the Jensen's inequality. 

The assumption of $\beta \leq \alpha/\theta$ or $\beta <\alpha/\theta$ will be frequently required in this paper. Such assumptions are satisfied when the setting allows to choose small enough $\beta$. In the setting when the given space uses the Euclidean norm $\norm{\cdot}$, the \cref{lem:expec_norm} guarantees that $\beta$ can be chosen as $\alpha$. This directly guarantees $\beta \leq \alpha /\theta$, and $\beta < \alpha \theta $ if $\theta<1$. 

For the space with general $M$-norm, the smallest choice of $\beta$ gets inflated as the block coordinates are no longer orthogonal. Later on, the \cref{lem:f.1} validates the existence of  $\beta$ that satisfies $\beta \leq \alpha /\theta$ with the restricted Friedrichs angle.
Furthermore, smaller $\theta$ allows larger $\beta$ to satisfy $\beta \leq \alpha / \theta$. Since $\alpha$ and $\beta$ only depends on the setting of the underlying space and the distribution of $\ve$, user may further average $\opT$ until $\beta \leq \alpha / \theta$ is satisfied. 
}

\section{Convergence of normalized iterates}
\label{sec:linearrate}

In this section, we show that \eqref{RC-FPI} exhibits behavior similar to \cref{thm:pazy}.
Let $x^0,x^1,x^2,\ldots$ be the iterates of \eqref{RC-FPI}.
For each $k\in \mathbb{N}$, we likewise call ${x^k}/{k}$ the $k$th \emph{normalized iterate} of \eqref{RC-FPI} for $k=1,2,\dots$. Then, the normalized iterate converges to $-\alpha \bv$  both in $L^2$ and almost surely.
\[
\frac{ x^k} {k} \stackrel{L^2}{\rightarrow} -\alpha \bv, \quad \frac{ x^k} {k} \stackrel{\mathrm{a.s.}}{\rightarrow} -\alpha \bv.
\]
To clarify,  $\stackrel{L^2}{\rightarrow}$ and 
$\stackrel{\mathrm{a.s.}}{\rightarrow}$ respectively denote $L^2$ and almost sure convergence of random variables. The almost sure convergence means that ${x^k}/{k}$ being strongly convergent to $-\alpha \bv$ happens with the probability $1$.

\subsection{Properties of expectation on {RC-FPI}} \label{subsec:props}
We first characterize certain aspects of \eqref{RC-FPI} before establishing our main results.
First, we present a lemma exhibiting a non-expansiveness.
\begin{lemma}
\label{lem:reduce_expec}
Let $\opT\colon\cH\to\cH$ be $\theta$-averaged with respect to $\Mnorm{\cdot}$ with $\theta\in (0,1]$. Let $\ve$ be a random selection vector with distribution satisfying the uniform expected step-size condition \eqref{condition:P} with $\alpha\in (0,1]$. Assume \eqref{eq:beta-def} holds with some $\beta$ such that $\beta\leq\alpha/\theta$. Let $X$ and $Y$ be random variables on $\cH$ that are independent of $\ve$.
(However, $X$ and $Y$ need not be independent.) Then,
    \[
\mathop{\expec}_{\ve,X,Y} \left[ \Mnorm{\opT_\ve X - \opT_\ve Y}^2 \right]
\leq \mathop{\expec}_{X,Y} \left[ \Mnorm{ X - Y}^2 \right].
    \]
    
\end{lemma}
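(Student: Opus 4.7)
The plan is to expand the squared $M$-norm via the decomposition $\opT_\vepsilon = \opI - \theta \opS_\vepsilon$ and exploit the three ingredients already in hand: linearity of the $\vepsilon$-expectation against the condition \eqref{condition:P}, the hypothesis \eqref{eq:beta-def}, and the $\frac{1}{2}$-cocoercivity of $\opS$ with respect to $\Mnorm{\cdot}$ that is built into $\opT$ being $\theta$-averaged.

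First, I would condition on $(X,Y)$. Since $\ve$ is independent of $(X,Y)$, I can treat $X-Y$ as a deterministic element of $\cH$ inside $\expec_\ve[\,\cdot\,\mid X,Y]$. Writing $u := \opS X - \opS Y$ and using $\opS_\ve = \sum_{i=1}^m \ve_i \opS_i$ gives $\opT_\ve X - \opT_\ve Y = (X-Y) - \theta\, u_\ve$, where $u_\ve$ is the componentwise selection introduced in Section~\ref{subsec:props}. Expanding the $M$-norm yields
\[
\Mnorm{\opT_\ve X - \opT_\ve Y}^2 = \Mnorm{X-Y}^2 - 2\theta \langle X - Y, u_\ve \rangle_M + \theta^2 \Mnorm{u_\ve}^2.
\]

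Next, I take the $\ve$-expectation of each term. By \eqref{condition:P}, $\expec_\ve[u_\ve] = \alpha u$, so the cross term becomes $-2\theta\alpha \langle X-Y, \opS X - \opS Y\rangle_M$. By \eqref{eq:beta-def}, the quadratic term is bounded by $\theta^2 \beta \Mnorm{u}^2$. The $\frac{1}{2}$-cocoercivity of $\opS$, which is equivalent to $\opT$ being $\theta$-averaged, gives $\langle X-Y, \opS X - \opS Y\rangle_M \geq \tfrac{1}{2}\Mnorm{\opS X - \opS Y}^2 = \tfrac{1}{2}\Mnorm{u}^2$. Combining these and invoking the hypothesis $\beta \leq \alpha/\theta$ gives
\[
\expec_\ve\!\left[ \Mnorm{\opT_\ve X - \opT_\ve Y}^2 \,\middle|\, X, Y \right] \leq \Mnorm{X-Y}^2 - \theta(\alpha - \theta\beta)\Mnorm{u}^2 \leq \Mnorm{X-Y}^2,
\]
since the coefficient $\theta(\alpha - \theta \beta)$ is non-negative. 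Finally, taking expectation over $(X,Y)$ and applying the tower property produces the claimed inequality.

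I do not anticipate a serious obstacle here: the argument is essentially the standard averaged-operator non-expansiveness bound with the stochastic selection folded in through the two moment identities for $u_\ve$. The only delicate point is bookkeeping: one must respect the conditioning on $(X,Y)$ so that $\opS X - \opS Y$ can be pulled out of $\expec_\ve$, and one must verify that the sign of $\alpha - \theta\beta$ is indeed forced to be non-negative by the hypothesis $\beta \leq \alpha/\theta$, which is precisely what allows the residual term to be discarded rather than merely estimated.
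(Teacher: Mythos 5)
Your proof is correct and follows essentially the same route as the paper's: expand $\Mnorm{\opT_\ve X-\opT_\ve Y}^2$ using $\opT_\ve=\opI-\theta\opS_\ve$, apply \eqref{condition:P} to the cross term and \eqref{eq:beta-def} to the quadratic term with $u=\opS X-\opS Y$, and absorb the residual via $\tfrac12$-cocoercivity together with $\beta\theta\le\alpha$. Your explicit conditioning on $(X,Y)$ before taking the $\ve$-expectation is a slightly cleaner bookkeeping of the independence assumption than the paper's one-shot expectation, but the argument is the same.
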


\begin{lemma}
\label{lem:L2_bound}
Let $\opT\colon\cH\to\cH$ be {$\theta$-averaged} respect to $\Mnorm{\cdot}$ with $\theta\in (0,1]$. Let $\ve$ be a random selection vector with distribution satisfying the uniform expected step-size condition \eqref{condition:P} with $\alpha\in (0,1]$. Assume \eqref{eq:beta-def} holds with some $\beta$.
For any $x, z\in\cH$, 
    \begin{align*}
            \mathop{\expec}_{\ve}\left[ \Mnorm{\opT_{\ve} x - \Bar{\opT}z }^2\right] 
            &\leq \Mnorm{x-z}^2
            + \theta^2 \left( \beta - \alpha^2\right) \Mnorm{\opS x}^2 
            -\alpha\theta \left( 1-\alpha \theta \right) \Mnorm{\opS x - \opS z}^2.
    \end{align*}
\end{lemma}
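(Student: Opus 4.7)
The plan is to expand the squared $M$-norm $\Mnorm{\opT_\ve x - \bar\opT z}^2$ by writing $\opT_\ve = \opI - \theta \opS_\ve$ and $\bar\opT = \opI - \alpha\theta \opS$, then take the expectation over $\ve$ and match the three terms against those on the right-hand side. So I would first write
\[
\opT_\ve x - \bar\opT z = (x-z) - \theta \opS_\ve x + \alpha\theta \opS z
\]
and expand
\[
\Mnorm{\opT_\ve x - \bar\opT z}^2 = \Mnorm{x-z}^2 + \theta^2 \Mnorm{\opS_\ve x - \alpha \opS z}^2 - 2\theta \langle x - z, \opS_\ve x - \alpha \opS z\rangle_M.
\]

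Next I would take $\expec_\ve$ of each piece. Observing that $\opS_\ve x = (\opS x)_\ve$ in the notation of the preceding subsection, the hypothesis $\expec_\ve[\ve] = \alpha \mathbf{1}$ gives $\expec_\ve[\opS_\ve x] = \alpha \opS x$, and \eqref{eq:beta-def} gives $\expec_\ve[\Mnorm{\opS_\ve x}^2] \leq \beta \Mnorm{\opS x}^2$. Expanding the middle squared norm and using these two facts yields
\[
\expec_\ve \Mnorm{\opS_\ve x - \alpha \opS z}^2 \leq \beta \Mnorm{\opS x}^2 - 2\alpha^2 \langle \opS x, \opS z\rangle_M + \alpha^2 \Mnorm{\opS z}^2,
\]
and the cross term contributes $-2\alpha\theta \langle x-z, \opS x - \opS z\rangle_M$.

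The key algebraic step is to regroup the right-hand side of the squared-norm expansion into the form claimed in the lemma. Writing $\beta = (\beta - \alpha^2) + \alpha^2$ and completing a square in $\opS x - \opS z$ gives
\[
\beta \Mnorm{\opS x}^2 - 2\alpha^2 \langle \opS x, \opS z\rangle_M + \alpha^2 \Mnorm{\opS z}^2 = (\beta - \alpha^2)\Mnorm{\opS x}^2 + \alpha^2 \Mnorm{\opS x - \opS z}^2,
\]
which accounts for the $\theta^2(\beta - \alpha^2)\Mnorm{\opS x}^2$ term and produces an extra $\alpha^2\theta^2 \Mnorm{\opS x - \opS z}^2$.

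Finally, applying the $\tfrac12$-cocoercivity of $\opS$ with respect to $\Mnorm{\cdot}$ to the cross term gives
\[
-2\alpha\theta \langle x-z, \opS x - \opS z\rangle_M \leq -\alpha\theta \Mnorm{\opS x - \opS z}^2,
\]
and combining this with the $\alpha^2\theta^2 \Mnorm{\opS x - \opS z}^2$ produced above yields precisely the coefficient $-\alpha\theta(1-\alpha\theta)$ on $\Mnorm{\opS x - \opS z}^2$ in the statement. I do not anticipate any real obstacle here: the proof is pure algebraic expansion followed by a single invocation of cocoercivity. The only place to be a bit careful is identifying $\opS_\ve x$ with $(\opS x)_\ve$ so that \eqref{eq:beta-def} can legitimately be applied with $u = \opS x$.
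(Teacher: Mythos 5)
Your proposal is correct and follows essentially the same route as the paper's proof: expand via $\opT_\ve=\opI-\theta\opS_\ve$, $\bar{\opT}=\opI-\alpha\theta\opS$, bound $\expec_\ve\Mnorm{\opS_\ve x-\alpha\opS z}^2$ using $\expec_\ve[\opS_\ve x]=\alpha\opS x$ together with \eqref{eq:beta-def}, and apply $\tfrac12$-cocoercivity to the cross term; your regrouping $\beta\Mnorm{\opS x}^2-2\alpha^2\langle\opS x,\opS z\rangle_M+\alpha^2\Mnorm{\opS z}^2=(\beta-\alpha^2)\Mnorm{\opS x}^2+\alpha^2\Mnorm{\opS x-\opS z}^2$ is the same algebra the paper performs by writing $\opS_\ve x-\alpha\opS z=(\opS_\ve x-\alpha\opS x)+\alpha(\opS x-\opS z)$. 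No gaps.
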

Proofs of \cref{lem:reduce_expec} and \cref{lem:L2_bound} are presented in the \ref{appendix:4.1}.

\subsection{$L^2$ convergence of normalized iterate} \label{subsec:L2}

\begin{theorem}
\label{thm:L2}
Let $\opT\colon\cH\to\cH$ be $\theta$-averaged with respect to $\| \cdot\|$-norm with $\theta\in \left( 0,1 \right]$.
Assume $\ve^0, \ve^1, \ldots$ is sampled IID from a distribution satisfying the uniform expected step-size condition \eqref{condition:P} with $\alpha\in (0,1]$.
Let $x^0,x^1,x^2,\ldots$ be the iterates of \eqref{RC-FPI}.
    Then
    \[
    \frac{ x^k} {k} \stackrel{L^2}{\rightarrow} -\alpha \bv
    \]
    as $k\to\infty$, where $\bv$ is the infimal displacement vector of $\opT$.
\end{theorem}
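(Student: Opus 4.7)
The plan is to compare the stochastic iterate $x^k$ with the deterministic iterate $y^k := \Bar{\opT}^k x^0$ of the averaged operator $\Bar{\opT} = \opI - \alpha\theta\opS$, and then invoke \cref{thm:pazy} on $\Bar{\opT}$. Since $\opI - \Bar{\opT} = \alpha(\opI - \opT)$, the ranges satisfy $\range(\opI - \Bar{\opT}) = \alpha\,\range(\opI - \opT)$, so the infimal displacement vector of $\Bar{\opT}$ is exactly $\alpha\bv$. As $\opS$ is $\tfrac{1}{2}$-cocoercive and $\alpha\theta \in (0,1]$, $\Bar{\opT}$ is $\alpha\theta$-averaged, hence non-expansive, so \cref{thm:pazy} applied to $\Bar{\opT}$ yields $y^k/k \to -\alpha\bv$ strongly. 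By the triangle inequality in $L^2$, it then suffices to prove $a_k := \expec\bigl[\norm{x^k - y^k}^2\bigr] = o(k^2)$.

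To obtain a recursion for $a_k$, I would apply \cref{lem:L2_bound} with $x = x^k$ and $z = y^k$, using $\beta = \alpha$ (from \cref{lem:expec_norm}) and the identity $\Bar{\opT} y^k = y^{k+1}$. Taking full expectations gives
\begin{equation*}
a_{k+1} \leq a_k + \theta^2\alpha(1-\alpha)\,\expec\bigl[\norm{\opS x^k}^2\bigr] - \alpha\theta(1-\alpha\theta)\,\expec\bigl[\norm{\opS x^k - \opS y^k}^2\bigr].
\end{equation*}
The key observation is that $\norm{\opS y^k}$ is bounded: because $\Bar{\opT}$ is averaged, $\norm{y^{k+1} - y^k} = \alpha\theta\norm{\opS y^k}$ is non-increasing in $k$. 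This lets me absorb $\norm{\opS x^k}^2$ via the splitting $\norm{\opS x^k}^2 \leq (1+\lambda)\norm{\opS x^k - \opS y^k}^2 + (1+\tfrac{1}{\lambda})\norm{\opS y^k}^2$, choosing $\lambda>0$ small enough to make the coefficient of $\norm{\opS x^k - \opS y^k}^2$ in the recursion non-positive. This leaves $a_{k+1} \leq a_k + O(1)$, hence $a_k = O(k) = o(k^2)$, which together with Pazy's theorem finishes the proof via the triangle inequality.

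The main obstacle, and the most delicate step, is the parameter tuning. A direct computation shows the cancellation requires $\lambda \leq (1-\theta)/[\theta(1-\alpha)]$, which is strictly positive precisely when $\theta < 1$ and $\alpha < 1$. The case $\alpha = 1$ is trivial because then $\ve \equiv \mathbf{1}$, so $\opT_\ve = \opT$ and $x^k = y^k$ deterministically. The edge case $\theta = 1$ with $\alpha < 1$ is more subtle since the above tuning breaks down; here I would fall back on a limiting argument, perturbing $\opT$ to $\opT_\delta := (1-\delta)\opI + \delta\opT$ with $\delta \in (0,1)$, applying the result to the $\delta$-averaged operator, and passing $\delta \to 1$ using the non-expansive dependence of the iterates on $\delta$.
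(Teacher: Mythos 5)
Your overall architecture (compare $x^k$ with the deterministic iterates of $\Bar{\opT}$, show $\expec[\norm{x^k-y^k}^2]=O(k)$, invoke \cref{thm:pazy}) is exactly the paper's, and your identification of the infimal displacement vector of $\Bar{\opT}$ as $\alpha\bv$ is correct. Where you diverge is in how the residual terms of \cref{lem:L2_bound} are killed: you absorb $\theta^2\alpha(1-\alpha)\norm{\opS x^k}^2$ into the negative term $-\alpha\theta(1-\alpha\theta)\norm{\opS x^k-\opS y^k}^2$ via Young's inequality, which (as you correctly compute) requires $\theta(1-\alpha)(1+\lambda)\le 1-\alpha\theta$, i.e.\ strict averagedness $\theta<1$. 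This is in fact precisely the mechanism the paper uses for the \emph{almost sure} statement (\cref{lem:as}, where the hypothesis is the strict inequality $\beta\theta<\alpha$ and \cref{thm:as} is accordingly stated only for $\theta\in(0,1)$). So for $\theta<1$, and for the trivial case $\alpha=1$, your argument is complete and correct.

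The genuine gap is the case $\theta=1$, $\alpha<1$, which \cref{thm:L2} explicitly includes. Your perturbation fallback $\opT_\delta=(1-\delta)\opI+\delta\opT$ does not close: the RC-FPI driven by $\opT_\delta$ is a \emph{different stochastic process} ($x_\delta^{k+1}=x_\delta^k-\delta\opS_{\ve^k}x_\delta^k$), and to pass $\delta\to1$ you must interchange the limits $k\to\infty$ and $\delta\to1$. Controlling $\expec[\norm{x_\delta^k-x^k}^2]$ uniformly enough to do this requires a bound on $\sup_k\expec[\norm{\opS x^k}^2]$ (the per-step discrepancy is $(1-\delta)\opS_{\ve^k}x^k$ and it accumulates over $k$ steps), and no such bound is available at this stage of the argument — individual-step Lipschitz estimates only give geometric growth, and deriving boundedness from the convergence of $x^k/k$ would be circular. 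The paper handles $\theta=1$ by a different decomposition that needs only the non-strict inequality $\beta\theta\le\alpha$: write the residual as
\[
-\theta(\alpha-\beta\theta)\norm{\opS x}^2+2\alpha\theta(1-\alpha\theta)\left<\opS x,\opS z\right>-\alpha\theta(1-\alpha\theta)\norm{\opS z}^2,
\]
drop the first term, bound the third using $\norm{\bv}\le\norm{\theta\opS z}$, and — after taking full expectation — bound the cross term by $\norm{\expec[\opS x^{k-1}]}\,\norm{\opS z^{k-1}}$, where $\norm{\expec[\opS x^k]}\le\beta^{1/2}\alpha^{-1}\norm{\opS x^0}$ is established by a shift/stationarity coupling plus Jensen (\cref{lem:bound_expec}). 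That bound on the \emph{norm of the expectation} (rather than the expectation of the squared norm) is the missing ingredient you would need; with it, the $\theta=1$ case goes through without any perturbation.
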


Before presenting the full proof, here is the key outline for the proof. 
Define another sequence $z^0,z^1,z^2,\dots$ with
\begin{equation} \label{FPI_T} \tag{FPI with $\Bar{\opT}$}
    z^{k+1} = \Bar{\opT} z^k, \qquad k=0,1,2,\dots.
\end{equation}
Apply \cref{lem:L2_bound} on the iterates of \eqref{RC-FPI} starting from $x^0$ and the iterates of \eqref{FPI_T} starting from $z^0=x^0$.
In \cref{appendix:L2}, we obtain a bound on the last two terms of \cref{lem:L2_bound} that is independent of $k$.
More specifically, for all $k=1,2,\dots$,
\[
\expec\left[ \Mnorm{ x^k - z^k }^2\right] 
\leq \expec\left[\Mnorm{x^{k-1}-z^{k-1}}^2 \right]+A,
\]
where $A =\left( 1-\alpha \theta \right) \left[ 2\sqrt{\alpha\theta}  \Mnorm{\opS x^0}^2
- \frac{\alpha}{\theta}  \Mnorm{\bv}^2 \right]$.
 Dividing by $k^2$ to get
\[
\expec\left[ \Mnorm{ \frac{ x^k} {k} - \frac{ z^k} {k} }^2 \right] \leq \frac{A}{k}
\]
and appealing to \cref{thm:pazy}, we conclude with the $L^2$ convergence.
We defer the detailed proof to \cref{appendix:L2}.


\subsection{Proof of \cref{thm:L2}}
\label{appendix:L2}

In the proof, the norm $\|\cdot\|$ and the inner product $\langle \cdot, \cdot \rangle$ are not used until the final part.
Lemmas prior to the main proof of \cref{thm:L2} uses the general $M$-norm and $M$-inner product.

We start the proof of \cref{thm:L2} by presenting two lemmas to upper bound the terms $\Mnorm{\opS z^k}$ and $\Mnorm{\expec \left[ \opS x^k\right] }$.
{
Proofs of Lemmas~\ref{lem:bound_FPI} and \ref{lem:bound_expec} are deferred to \ref{appendix:4.2}.
}

\begin{lemma} \label{lem:bound_FPI}
    $\opT\colon\cH\to\cH$ is a {$\theta$-averaged} with $\theta \in \left( 0,1\right]$ and choose any starting point $z^0\in\cH$ for \eqref{FPI} with $\opT$. When $\opS = \theta^{-1}{(\opI-\opT)}$,
    \[
    \Mnorm{\opS  z^k} \leq \Mnorm{\opS z^ {k-1}} \leq \dots \leq \Mnorm{\opS z^0}.
    \]
\end{lemma}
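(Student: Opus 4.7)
The plan is to reduce everything to the non-expansiveness of $\Bar{\opT} = \opI - \alpha\theta\opS$ with respect to $\Mnorm{\cdot}$, together with an identity that expresses $\opS z^k$ as a rescaled increment of \eqref{FPI_T}. Since $\opT$ is $\theta$-averaged with respect to $\Mnorm{\cdot}$, the preliminaries give that $\opS = (\opI-\opT)/\theta$ is $\tfrac12$-cocoercive in $\Mnorm{\cdot}$. Reading the same equivalence in the reverse direction, $\Bar{\opT} = \opI - \alpha\theta\opS$ is therefore $(\alpha\theta)$-averaged, and in particular non-expansive, with respect to $\Mnorm{\cdot}$ whenever $\alpha\theta \in (0,1]$.

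Next I would exploit the identity
\[
z^{k+1} - z^k \;=\; \Bar{\opT}z^k - z^k \;=\; -\alpha\theta\,\opS z^k,
\qquad\text{so}\qquad
\Mnorm{\opS z^k} \;=\; \tfrac{1}{\alpha\theta}\Mnorm{z^{k+1}-z^k}.
\]
Applying non-expansiveness of $\Bar{\opT}$ to the pair $(z^{k-1},z^k)$ yields
\[
\Mnorm{z^{k+1}-z^k} \;=\; \Mnorm{\Bar{\opT}z^k-\Bar{\opT}z^{k-1}} \;\le\; \Mnorm{z^k-z^{k-1}},
\]
and dividing through by $\alpha\theta$ gives $\Mnorm{\opS z^k} \le \Mnorm{\opS z^{k-1}}$. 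Chaining this one-step bound from $k$ down to $1$ produces the stated monotonicity.

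I do not anticipate a real obstacle here: the result is essentially the classical observation that the fixed-point residual along the orbit of a non-expansive map is non-increasing. The only minor point worth flagging is the boundary case $\alpha\theta = 1$, but the paper's convention that a ``$1$-averaged'' operator simply means a non-expansive one makes the argument go through verbatim.
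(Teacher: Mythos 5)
Your proof is correct. At heart it runs on the same mechanism as the paper's argument --- both come down to the $\tfrac{1}{2}$-cocoercivity of $\opS$ applied to a pair of consecutive iterates --- but the packaging is genuinely different. The paper expands the cocoercivity inequality $2\left<\opS\opT z-\opS z,\,\opT z-z\right>_M\ge\Mnorm{\opS\opT z-\opS z}^2$ directly, substitutes $\opT z-z=-\theta\opS z$, and manipulates the resulting expression into $\Mnorm{\opS\opT z}^2\le\Mnorm{\opS z}^2$. You instead invoke the averaged/cocoercive equivalence from the preliminaries to conclude that $\Bar{\opT}=\opI-\alpha\theta\opS$ is non-expansive, and note that $\opS z^k$ is a fixed multiple of the displacement $z^{k+1}-z^k$, reducing the lemma to the classical fact that the fixed-point residual of a non-expansive map is non-increasing along its orbit. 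Your route buys two things: it avoids the explicit algebra, and it works directly with the actual iteration map $\Bar{\opT}$ of \eqref{FPI_T} --- the paper's written proof establishes monotonicity along the orbit of $\opT$ itself (it uses $\opT z - z = -\theta\opS z$), which yields the stated lemma about $\Bar{\opT}$ only after the harmless substitution $\theta\mapsto\alpha\theta$; your version needs no such adjustment. Your remark on the boundary case $\alpha\theta=1$, handled by the paper's convention that $1$-averaged means non-expansive, is also correct.
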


For the remark, same inequality holds for the \eqref{FPI_T}, since $\opS= \theta^{-1}\alpha^{-1}(\opI - \bar{\opT})$.

\begin{lemma}
\label{lem:bound_expec}
    $\opT\colon\cH\to\cH$ is a {$\theta$-averaged} with $\theta \in \left( 0,1 \right]$ and choose any starting point $x^0\in\cH$ for \eqref{RC-FPI}. When $\opS = \theta^{-1}{(\opI-\opT)}$,
    \[
\Mnorm{ \expec \left[ \opS \opT_{\ve^k}  \dots \opT_{\ve^0} x^0 \right]} \leq \beta^{1/2} \alpha^{-1} \Mnorm{\opS x^0}
    \]
    holds if $\ve^0, \ve^1, \dots , \ve^k$ follow IID distribution with the condition \eqref{condition:P} with $\alpha \in \left( 0,1\right]$ and \eqref{eq:beta-def} holds with some $\beta$ such that $\beta\leq\alpha/\theta$.
\end{lemma}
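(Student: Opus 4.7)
The plan is to reduce the claim to a uniform-in-$k$ second-moment bound on $\opS x^{k+1}$ and then establish that bound by a one-step monotonicity in the spirit of \cref{lem:bound_FPI}. The key observation is that, for any independent fresh copy $\ve$ of the selection vector (drawn from the same distribution and independent of $x^{k+1}$), the law of total expectation combined with $\expec[\opS_\ve u \mid u]=\alpha \opS u$ yields the identity
\[
\alpha\, \expec\!\left[\opS\, \opT_{\ve^k}\cdots \opT_{\ve^0} x^0\right] = \expec\!\left[\opS_\ve\, \opT_{\ve^k}\cdots \opT_{\ve^0} x^0\right].
\]
Applying Jensen's inequality to the Hilbert-space norm of the right-hand side and then invoking the pointwise bound \eqref{eq:beta-def} gives
\[
\alpha^2\, \Mnorm{\expec[\opS x^{k+1}]}^2 \leq \expec\!\left[\Mnorm{\opS_\ve x^{k+1}}^2\right] \leq \beta\, \expec\!\left[\Mnorm{\opS x^{k+1}}^2\right],
\]
so the target reduces to proving $\expec[\Mnorm{\opS x^{k+1}}^2] \leq \Mnorm{\opS x^0}^2$.

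Next I would establish the pointwise one-step monotonicity
\[
\expec_\ve\!\left[\Mnorm{\opS\, \opT_\ve u}^2\right] \leq \Mnorm{\opS u}^2 \qquad \forall\, u \in \cH,
\]
after which the tower property and induction on $k$ deliver $\expec[\Mnorm{\opS x^{k+1}}^2] \leq \Mnorm{\opS x^0}^2$. To prove the monotonicity, I would mimic the argument of \cref{lem:bound_FPI} in expectation. Applying the $\tfrac{1}{2}$-cocoercivity of $\opS$ at the pair $u$ and $\opT_\ve u = u - \theta \opS_\ve u$ gives
\[
\Mnorm{\opS \opT_\ve u - \opS u}^2 \leq -2\theta \left\langle \opS\opT_\ve u - \opS u,\, \opS_\ve u \right\rangle_M,
\]
after which I would expand $\Mnorm{\opS \opT_\ve u}^2$ by polarization about $\opS u$, take expectation over $\ve$, and invoke $\expec[\opS_\ve u] = \alpha \opS u$ together with $\expec[\Mnorm{\opS_\ve u}^2] \leq \beta \Mnorm{\opS u}^2$. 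The monotonicity should close exactly when $\theta\beta \leq \alpha$, the standing hypothesis of the lemma.

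The main obstacle is controlling the cross-term $\expec[\langle \opS \opT_\ve u, \opS_\ve u\rangle_M]$ that arises from the expansion: a naive Cauchy-Schwarz bound is too loose to deliver a contraction. The fix parallels the derivations in \cref{lem:reduce_expec} and \cref{lem:L2_bound}, namely, to retain the cocoercivity-derived quadratic $\Mnorm{\opS \opT_\ve u - \opS u}^2$ rather than discard it, so that the leftover variance $\theta^2 \beta \Mnorm{\opS u}^2$ coming from $\expec[\Mnorm{\opS_\ve u}^2]$ is precisely absorbed by the favorable $2\alpha\theta \Mnorm{\opS u}^2$ arising from $\expec[\opS_\ve u] = \alpha \opS u$, with the inequality closing under $\theta\beta \leq \alpha$. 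Once the one-step monotonicity is in hand, combining it with the identity above yields $\Mnorm{\expec[\opS x^{k+1}]} \leq (\sqrt{\beta}/\alpha)\, \Mnorm{\opS x^0}$ as claimed.
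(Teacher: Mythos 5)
Your reduction in the first display is fine, but it shifts the burden onto the claim $\expec\left[\Mnorm{\opS x^{k+1}}^2\right] \leq \Mnorm{\opS x^0}^2$, and that claim is false under the hypotheses of the lemma. The paper's own example in \cref{sec:experiments} is a counterexample: for the $\tfrac12$-averaged operator $\opT(x,y) = \left( x-\frac{1+x-y}{2},\, y-\frac{1+y-x}{2}\right)$ on $\RR^2$ with $\theta=\tfrac12$, $\alpha=\beta=\tfrac12$ (so $\beta\theta\leq\alpha$ holds), one has $\opS(x,y)=(1+x-y,\,1+y-x)$ and hence $\norm{\opS(x^k,y^k)}^2 = 2 + 2(x^k-y^k)^2$; starting from the origin, the paper computes $\expec\left[(x^k-y^k)^2\right]=\tfrac13\left(1-4^{-k}\right)>0$, so $\expec\left[\norm{\opS x^k}^2\right] = 2+\tfrac23\left(1-4^{-k}\right) > 2 = \norm{\opS x^0}^2$ for every $k\geq 1$. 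Correspondingly, your proposed one-step monotonicity $\expec_\ve\left[\Mnorm{\opS\opT_\ve u}^2\right]\leq\Mnorm{\opS u}^2$ fails already at $u=0$ in this example. The reason the argument of \cref{lem:bound_FPI} does not transfer is structural: there, the displacement $\opT z - z = -\theta\opS z$ is aligned with $\opS z$, so cocoercivity applied to the pair $(z,\opT z)$ directly kills the cross term; in the randomized step the displacement is $-\theta\opS_\ve u$, and cocoercivity only controls $\left\langle \opS\opT_\ve u-\opS u,\,\opS_\ve u\right\rangle_M$, not $\left\langle \opS\opT_\ve u-\opS u,\,\opS u\right\rangle_M$. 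These cannot be exchanged in expectation because $\opS\opT_\ve u$ is correlated with $\ve$. Moreover, the cancellation you describe (a $\theta^2\beta\Mnorm{\opS u}^2$ term absorbed by $2\alpha\theta\Mnorm{\opS u}^2$) presupposes an expansion of the form $\Mnorm{\opS u - \theta\opS_\ve u}^2$, but $\opS\opT_\ve u$ is $\opS$ evaluated at the updated point, not $\opS u - \theta\opS_\ve u$; no such algebraic identity is available for a general nonlinear $\opS$.

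The paper's proof avoids the second moment of $\opS x^k$ entirely. It writes $\alpha\theta\,\expec\left[\opS x^k\right] = \expec\left[(\opI-\opT_{\ve^k})x^k\right]$, recognizes this as the expected difference between the trajectory $\opT_{\ve^k}\cdots\opT_{\ve^0}x^0$ and the coupled trajectory $\opT_{\ve^k}\cdots\opT_{\ve^1}x^0$ that omits only the very first update (using the IID shift), and then controls that difference by iterating the non-expansiveness in expectation of \cref{lem:reduce_expec} together with Jensen's inequality, reducing everything to $\expec\left[\Mnorm{\theta\opS_{\ve^0}x^0}^2\right]\leq\beta\Mnorm{\theta\opS x^0}^2$. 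That coupling/telescoping step is the essential idea missing from your proposal, and without it the route through $\expec\left[\Mnorm{\opS x^{k+1}}^2\right]$ cannot be repaired.
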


\begin{proposition}
\label{lem:L2}
    Let $\opT\colon\cH\to\cH$ be $\theta$-averaged with respect to $\Mnorm{\cdot}$ with $\theta\in(0,1]$, and let $x^0, x^1, x^2, \dots$ be the iterates of \eqref{RC-FPI} and let $z^0, z^1, z^2, \dots$ be the iterates of \eqref{FPI_T}. Assume that the distribution of $\ve$ satisfies the uniform expected step-size condition \eqref{condition:P} with $\alpha\in (0,1]$ and \eqref{eq:beta-def} holds with some $\beta$ such that $\beta\leq\alpha/\theta$.
    Then
    \begin{equation*}
        \begin{split}
&\expec\left[ \Mnorm{\frac{ x^k} {k} - \frac{ z^k} {k} }^2 \right] \leq \frac{1}{k^2}\Mnorm{x^0 - z^0}^2\\
&\hspace{0.2cm}+ \frac{1} {k} \left( 1-\alpha \theta \right) \left[ 2\sqrt{\alpha\theta}  \Mnorm{\opS x^0} \Mnorm{\opS z^0}
- \frac{\alpha}{\theta}  \Mnorm{\bv}^2 \right], \\
        \end{split}
    \end{equation*}
    where $\bv$ is the infimal displacement vector of $\opT$.
\end{proposition}

\begin{proof}
    The key step of proving \cref{lem:L2} is to bound the last two terms in \cref{lem:L2_bound}. To achieve this, rewrite the terms as
    \begin{equation*}
        \begin{split}
            &-\alpha\theta \left( 1-\alpha \theta \right) \Mnorm{\opS x - \opS z}^2 
            + \theta^2 \left( \beta - \alpha^2\right) \Mnorm{\opS x}^2 \\
            &\hspace{0.5cm} = - \theta \left(\alpha - \beta \theta  \right)\Mnorm{\opS x}^2
            +2\alpha\theta \left( 1-\alpha \theta \right) \left< \opS x , \opS z \right>_M
            -\alpha\theta \left( 1-\alpha \theta \right) \Mnorm{ \opS z}^2 \\
            &\hspace{0.5cm} \leq - \theta^{-1} \alpha \left( 1- \alpha \theta \right)\Mnorm{\bv}^2
            +2\alpha\theta \left( 1-\alpha \theta \right) \left< \opS x , \opS z \right>_M,
        \end{split}
    \end{equation*}
    where the last inequality is from $\bv$ being infimal displacement vector, i.e. $\Mnorm{\bv} \leq \Mnorm{\theta \opS x}, \Mnorm{\theta \opS z}$.

    Now use \cref{lem:L2_bound} with $x$ as $ x^k$ and $z$ as $ z^k$. Take a full expectation among $\ve^0, \ve^1, \dots , \ve^{k-1}$, then we get
    \begin{equation*}
        \begin{split}
            &\expec\left[ \Mnorm{ x^k -  z^k }^2 \right]\\
            &\hspace{0.5cm} \leq \expec\left[ \Mnorm{x^ {k-1}-z^ {k-1}}^2  \right]
            - \theta^{-1} \alpha \left( 1- \alpha \theta \right) \Mnorm{\bv}^2
            +2\alpha\theta \left( 1-\alpha \theta \right) \expec \left[ \left< \opS x^ {k-1} , \opS z^ {k-1} \right>_M\right] \\
            &\hspace{0.5cm} \leq \expec\left[ \Mnorm{x^ {k-1}-z^ {k-1}}^2  \right]
            - \theta^{-1} \alpha \left( 1- \alpha \theta \right) \Mnorm{\bv}^2
            +2\alpha\theta \left( 1-\alpha \theta \right) \Mnorm{\expec \left[ \opS x^ {k-1} \right] } \Mnorm{\opS z^ {k-1} } \\
            &\hspace{0.5cm} \leq \expec\left[ \Mnorm{x^ {k-1}-z^ {k-1}}^2  \right]
            - \theta^{-1} \alpha \left( 1- \alpha \theta \right) \Mnorm{\bv}^2
            +2\beta^{1/2}\theta \left( 1-\alpha \theta \right) \Mnorm{ \opS x^{0}  } \Mnorm{\opS z^{0} } \\
            &\hspace{0.5cm} \leq \expec\left[ \Mnorm{x^ {k-1}-z^ {k-1}}^2  \right]
            - \theta^{-1} \alpha \left( 1- \alpha \theta \right) \Mnorm{\bv}^2
            +2\sqrt{\alpha \theta} \left( 1-\alpha \theta \right) \Mnorm{ \opS x^{0}  } \Mnorm{\opS z^{0} },
        \end{split}
    \end{equation*}
    where the third inequality uses \cref{lem:bound_FPI} and \cref{lem:bound_expec}. Note that the term 
    \[
- \theta^{-1} \alpha \left( 1- \alpha \theta \right) \Mnorm{\bv}^2
            +2\sqrt{\alpha \theta} \left( 1-\alpha \theta \right) \Mnorm{ \opS x^{0}  } \Mnorm{\opS z^{0} }
    \]
    is independent from the random process and iterations. Thus, above inequality can be applied through iterations, resulting in
    \[
\expec\left[ \Mnorm{ x^k -  z^k }^2 \right]
\leq \Mnorm{x^0 - z^0 }^2 
+ k \left( - \frac{\alpha}{\theta} \left(1 - \alpha \theta \right)\Mnorm{\bv}^2
            +2\sqrt{\alpha \theta} \left( 1-\alpha \theta \right) \Mnorm{ \opS x^{0}  } \Mnorm{\opS z^{0} } \right) .
    \]
    Divide both sides by $k^2$ to obtain the desired result
    \begin{align*}
        &\expec\left[ \Mnorm{\frac{ x^k} {k} - \frac{ z^k} {k} }^2 \right] \\
&\hspace{0.5cm}\leq 
\frac{1} {k} \left( 
2\sqrt{\alpha \theta}  \left( 1-\alpha \theta \right) \Mnorm{ \opS x^{0}  } \Mnorm{\opS z^0}
- \frac{\alpha}{\theta} \left(1 - \alpha \theta \right)\Mnorm{\bv}^2
\right) +\frac{1}{k^2}\Mnorm{x^0 - z^0}^2.
    \end{align*}

\end{proof}

\begin{proof} [Proof of \cref{thm:L2}]
Since the $M=\opI$, we have $\beta = \alpha \leq \alpha/\theta$ due to \cref{lem:expec_norm}. Thus, we may apply \cref{lem:L2} with $z^0 = x^0$.
\[
\expec\left[ \norm{\frac{ x^k} {k} - \frac{ z^k} {k} }^2 \right]
\leq 
\frac{1} {k} \left( 
2\sqrt{\alpha \theta}  \left( 1-\alpha \theta \right) \norm{ \opS x^{0}  }^2
- \frac{\alpha}{\theta} \left(1 - \alpha \theta \right)\norm{\bv}^2
\right) .
    \]
When the limit $k\to \infty$ is taken, 
    \[
\lim_{k\to \infty} \expec\left[ \norm{\frac{ x^k} {k} - \frac{ z^k} {k} }^2 \right]=0, \quad \lim_{k\to\infty} \norm{\frac{ z^k} {k} +\alpha \bv } = 0,
    \]
    where the second equation is from \cref{thm:pazy}. These two limits provide $L^2$ convergence of normalized iterate, namely
    \[
 \frac{ x^k} {k}  \stackrel{L^2}{\rightarrow} -\alpha \bv,
    \]
    as $k\to \infty$.
\end{proof}

\subsection{Almost sure convergence of normalized iterate} \label{subsec:as}
\begin{theorem}
\label{thm:as}
Under the conditions of Theorem~\ref{thm:L2} with $\theta \in \left( 0,1\right)$, ${x^k}/{k}$ is strongly convergent to $-\alpha \bv$ in probability $1$. In other words,
    \[
 \frac{x^k}{k} \stackrel{\mathrm{a.s.}}{\rightarrow} -\alpha \bv
    \]
    as $k\to\infty$.
\end{theorem}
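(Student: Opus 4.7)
The plan is to upgrade the $L^2$ convergence of \cref{thm:L2} to almost sure convergence by combining a subsequence argument via Borel--Cantelli with an interpolation step, with the strict-averagedness assumption $\theta<1$ entering to control the size of the iterate increments.

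First, let $e_k := \expec[\|x^k/k + \alpha\bv\|^2]$. The proof of \cref{thm:L2} (specifically \cref{lem:L2} applied with $z^0 = x^0$) gives $\expec[\|x^k/k - z^k/k\|^2] = \bigO(1/k)$, and \cref{thm:pazy} gives $z^k/k \to -\alpha \bv$ deterministically, so $e_k \to 0$. Since $e_k \to 0$, I would pick an increasing subsequence $\{k_n\}$ with $k_n \asymp n^2$ and $e_{k_n} \le n^{-2}$; such a choice always exists. Chebyshev's inequality then yields $P(\|x^{k_n}/k_n + \alpha\bv\| > \varepsilon) \le \varepsilon^{-2} e_{k_n}$, which is summable in $n$, and Borel--Cantelli produces $x^{k_n}/k_n \to -\alpha\bv$ almost surely along $\{k_n\}$.

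Next, I would fill in the gaps between consecutive $k_n$. For $k_n \le k < k_{n+1}$, the decomposition
\[
\left\|\frac{x^k}{k}-\frac{x^{k_n}}{k_n}\right\|
\le \frac{1}{k}\sum_{j=k_n}^{k-1}\|x^{j+1}-x^j\| + \frac{k-k_n}{k}\left\|\frac{x^{k_n}}{k_n}\right\|
\]
reduces matters to controlling the two summands. The second is $\bigO((k_{n+1}-k_n)/k_n) = \bigO(1/n)$ once we know $x^{k_n}/k_n$ is a.s.\ bounded (which follows from the subsequence convergence). For the first, the increment bound $\|x^{j+1}-x^j\| = \theta\|\opS_{\ve^j}x^j\| \le \theta \|\opS x^j\|$ shows that everything reduces to bounding $\|\opS x^j\|$ along the iteration.

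The main obstacle, and precisely the point where the hypothesis $\theta<1$ is essential, is to show that $\|\opS x^k\|$ stays almost surely bounded. In analogy with \cref{lem:bound_FPI}, which uses cocoercivity of $\opS$ together with the strict positivity of $(1-\theta)$ to prove the deterministic monotonicity $\|\opS z^{k+1}\| \le \|\opS z^k\|$ for \eqref{FPI_T}, I would apply the cocoercivity inequality to $x^k$ and $\opT_{\ve^k} x^k$, substitute $\opT_{\ve^k} x^k - x^k = -\theta \opS_{\ve^k} x^k$, and take conditional expectation over $\ve^k$. The factor $(1-\theta)>0$ should yield an almost-supermartingale inequality of the form $\expec[\|\opS x^{k+1}\|^2 \mid \cF_k] \le \|\opS x^k\|^2 + b_k$ with a summable residual $b_k$, from which Robbins--Siegmund's theorem produces almost sure convergence, hence boundedness, of $\|\opS x^k\|$. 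Given this bound, $\sum_{j=k_n}^{k-1}\|\opS x^j\| = \bigO(k-k_n) = \bigO(n)$ almost surely, so the first interpolation term is also $\bigO(1/n) \to 0$, and combining with the subsequence convergence yields $x^k/k \to -\alpha\bv$ almost surely.
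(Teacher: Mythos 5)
Your subsequence-plus-interpolation strategy is a reasonable template, but the step on which everything hinges --- almost sure boundedness of $\norm{\opS x^k}$ via an almost-supermartingale inequality $\expec\left[\norm{\opS x^{k+1}}^2 \mid \cF_k\right] \leq \norm{\opS x^k}^2 + b_k$ with summable $b_k$ --- is not available, and the paper's own example in \cref{sec:experiments} refutes it. Take $\opT(x,y) = \left(x - \tfrac{1+x-y}{2},\, y - \tfrac{1+y-x}{2}\right)$ with $\theta = \tfrac12$, so $\opS(x,y) = (1+x-y,\, 1+y-x)$ and $\norm{\opS(x,y)}^2 = 2 + 2s^2$ with $s = x-y$. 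Starting from the origin, a single coordinate update sends $s^0=0$ to $s^1 = \pm\tfrac12$, so $\norm{\opS x^1}^2 = 2.5 > 2 = \norm{\opS x^0}^2$ \emph{with probability one}; more generally $\expec\left[\norm{\opS x^{k+1}}^2 \mid \cF_k\right] - \norm{\opS x^k}^2 = \tfrac12 - \tfrac32 s_k^2$, which is positive on a set of iterations of asymptotic density bounded away from zero (the recursion $s_{k+1} = \tfrac12 s_k \pm \tfrac12$ keeps returning near $0$), so the excess is not almost surely summable and Robbins--Siegmund cannot be applied to $\norm{\opS x^k}^2$. The reason the deterministic argument of \cref{lem:bound_FPI} does not transfer is that cocoercivity controls the component of $\opS x^{k+1} - \opS x^k$ along the actual step direction $\opS_{\ve^k} x^k$, not along $\opS x^k$; these differ precisely because only some coordinates move. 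Without a uniform a.s.\ bound on the increments, your interpolation fails: the best available bound $\norm{\opS x^j} \leq \norm{\opS z^j} + 2\norm{x^j - z^j}$ combined with $\expec\left[\norm{x^j-z^j}^2\right] = \bigO(j)$ gives $\frac{1}{k}\sum_{j=k_n}^{k-1}\norm{x^{j+1}-x^j} = \bigO(1)$ over gaps of length $k_{n+1}-k_n \asymp n$, which does not vanish. (A secondary issue: $\expec\left[\Mnorm{x^k/k + \alpha\bv}^2\right]$ is only $o(1)$, not $\bigO(1/k)$, since \cref{thm:pazy} carries no rate; you should run Borel--Cantelli on $\norm{x^{k_n}/k_n - z^{k_n}/k_n}$ instead, which is fixable.)

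The paper avoids this obstruction entirely by applying Robbins--Siegmund (\cref{lem:quasimartigale}) not to $\norm{\opS x^k}^2$ but to the comparison quantity $\Mnorm{\tfrac{x^k}{k} - \tfrac{z^k}{k}}^2$, where $z^k$ is the deterministic sequence \eqref{FPI_T}. \cref{lem:L2_bound} plus a completion of squares yields
\[
\expec\left[ \Mnorm{ \tfrac{ x^k} {k} - \tfrac{ z^k} {k} }^2 \,\middle|\, \cF_{k-1} \right] \leq \Mnorm{ \tfrac{x^{k-1}} {k-1} - \tfrac{z^{k-1}} {k-1} }^2 + \tfrac{B}{k^2} \Mnorm{\opS z^0}^2,
\]
where the additive error is deterministic and summable because \cref{lem:bound_FPI} applies to the \emph{deterministic} sequence $z^k$. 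The hypothesis $\theta<1$ enters there to guarantee $\beta\theta = \alpha\theta < \alpha$, which makes the constant $B = \frac{\alpha\theta^2(1-\alpha\theta)(\beta-\alpha^2)}{\alpha-\beta\theta}$ finite and nonnegative --- a different role from the one you assign it. If you want to keep your architecture, you would need to replace the boundedness claim for $\norm{\opS x^k}$ with a martingale-type control of $\max_{k_n \le k < k_{n+1}} \norm{x^k - x^{k_n}}$, which in effect reproduces the paper's supermartingale argument.
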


While the full proof is presented in the next subsection, here is the outline of the proof of the theorem. Let $z^0,z^1,z^2,\dots$ be the iterates of \eqref{FPI_T}. Assume $\beta < \alpha / \theta$.
From \cref{lem:L2_bound} and further analysis in \cref{appendix:as},
we obtain
\begin{equation*}
    \begin{split}
        &\expec \left[ \Mnorm{ \frac{ x^k} {k} - \frac{ z^k} {k} }^2 \middle| \cF_ {k-1} \right] 
        \leq \Mnorm{ \frac{x^ {k-1}} {k-1} - \frac{z^ {k-1}} {k-1} }^2 + \frac{B}{k^2} \Mnorm{\opS z^0}^2
    \end{split}
\end{equation*}
for $k=2,3,\dots$,
where $B={\alpha \theta^2 (1 - \alpha\theta) (\beta - \alpha^2)}/{(\alpha - \beta \theta)} \geq 0$
and $\cF_k$ is a filtration consisting of information up to the $k$th iterate.

We then apply the Robbins--Siegmund quasi-martingale theorem \citep{robbins1971convergence},
restated as \cref{lem:quasimartigale}, 
to conclude that the random sequence $\Mnorm{ \frac{ x^k} {k} - \frac{ z^k} {k} }^2 $ converges almost surely to some random variable.
Then, by Fatou's lemma and the $L^2$ convergence of \cref{thm:L2}, we have
\[
\expec \left[ \lim_{k\to\infty} \Mnorm{ \frac{ x^k} {k} - \frac{ z^k} {k} }^2 \right] \leq 
\lim_{k\to\infty} \expec \left[  \Mnorm{ \frac{ x^k} {k} - \frac{ z^k} {k} }^2 \right] = 0.
\]

Thus, 
as $k\rightarrow\infty$,
\[
 \Mnorm{ \frac{ x^k} {k} - \frac{ z^k} {k} }^2  \stackrel{\mathrm{a.s.}}{\rightarrow} 0,
\]
and appealing to \cref{thm:pazy}, we conclude the almost sure convergence. Finally, in the case of $\| \cdot\|$-norm, the assumption $\beta<\alpha/\theta$ is satisfied by \cref{lem:expec_norm}.
We defer the detailed proof to \cref{appendix:as}.


\subsection{Proof of \cref{thm:as}} \label{appendix:as}

First, let's recall the Robbins-Siegmund quasi-martingale theorem \citep{robbins1971convergence}.
\begin{lemma} [\citet{robbins1971convergence}] \label{lem:quasimartigale}
    $\cF_1 \subset \cF_2 \subset \dots$ is a sequence of sub-$\sigma$-algebras of $\cF$ where   $\left( \Omega, \cF, P\right)$ is a probability space. When $X_k, b_k, \tau _k, \zeta_k$ are non-negative $\cF_k$-random variables such that
    \[
    \expec \left[ X_{k+1}\mid \cF_k\right] \leq \left( 1+ b_k \right) X_k + \tau _k - \zeta_k,
    \]
    $\lim_{k\to \infty} X_k$ exists and is finite and $\sum_{k=1}^\infty \zeta_k < \infty$ almost surely if  
     $\sum_{k=1}^\infty b_k < \infty, \sum_{k=1}^\infty \tau _k < \infty$.
\end{lemma}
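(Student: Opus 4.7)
My plan is to prove the Robbins--Siegmund quasi-martingale theorem by a standard two-step reduction to the Doob non-negative supermartingale convergence theorem, which lets us exchange the multiplicative drift contributed by $b_k$ and the additive perturbation contributed by $\tau_k$ for a clean supermartingale relation.

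First I would absorb the multiplicative drift. Let $c_k = \prod_{i=1}^{k-1}(1+b_i)$, which is $\cF_{k-1}$-measurable, nondecreasing, and converges almost surely to some $c_\infty \in [1,\infty)$ because $b_i \geq 0$ and $\sum_i b_i < \infty$ (using $\log(1+b_i)\leq b_i$). Dividing the hypothesis by $c_{k+1} = (1+b_k)c_k$, and writing $\tilde X_k = X_k/c_k$, $\tilde\tau_k = \tau_k/c_{k+1}$, $\tilde\zeta_k = \zeta_k/c_{k+1}$, yields
\[
\expec[\tilde X_{k+1}\mid \cF_k] \;\leq\; \tilde X_k + \tilde\tau_k - \tilde\zeta_k,
\]
with $\sum_k \tilde\tau_k \leq \sum_k \tau_k < \infty$ a.s.\ since $c_{k+1}\geq 1$. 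Second, I would absorb the additive term by defining
\[
V_k \;=\; \tilde X_k + \sum_{i=k}^\infty \tilde\tau_i,
\]
which is almost-surely finite, nonnegative, and $\cF_k$-measurable. A direct computation gives $\expec[V_{k+1}\mid\cF_k] \leq V_k - \tilde\zeta_k \leq V_k$, so $(V_k)$ is a nonnegative supermartingale.

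Doob's nonnegative supermartingale convergence theorem then gives $V_k \to V_\infty$ almost surely with $V_\infty$ finite a.s. Since $\sum_{i=k}^\infty \tilde\tau_i \to 0$ a.s., $\tilde X_k$ converges a.s.\ to a finite limit, and multiplying by the convergent $c_k$ shows $X_k$ converges a.s.\ too. For the summability of $\zeta_k$, rearrange the supermartingale inequality to $\tilde\zeta_k \leq V_k - \expec[V_{k+1}\mid \cF_k]$ and sum: telescoping plus monotone convergence yields $\sum_{k=1}^\infty \tilde\zeta_k \leq V_1 < \infty$ a.s., whence $\sum_k \zeta_k \leq c_\infty \sum_k \tilde\zeta_k < \infty$ a.s.

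The main technical subtlety is that no integrability is assumed on $X_1$, so $V_1$ need not be integrable and the conditional-expectation manipulations require care. The standard remedy is localization: apply the above to the stopped process $V_k^{\sigma_N}$ with $\sigma_N = \inf\{k : V_k > N\}$, each of which is bounded so that Doob applies directly, and on $\{\sigma_N=\infty\}$ both the a.s.\ convergence and the $\zeta$-summability are immediate. Letting $N\to\infty$ and using $V_1 < \infty$ a.s.\ (which follows from $X_1$ being almost-surely finite and $\sum_i\tilde\tau_i<\infty$ a.s.) transfers the conclusion to the full-probability event $\bigcup_N\{\sigma_N=\infty\}$. Once this localization bookkeeping is in place, the remaining manipulations are routine.
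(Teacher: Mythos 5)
The paper does not actually prove this lemma; it is quoted verbatim from Robbins and Siegmund (1971), so there is no in-paper argument to compare against. Your overall strategy --- normalize by $c_k=\prod_{i<k}(1+b_i)$ to kill the multiplicative drift, reduce to a nonnegative supermartingale, invoke Doob, and localize to handle the missing integrability of $X_1$ --- is exactly the classical route. However, there is one genuine error in the middle step.

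The object $V_k=\tilde X_k+\sum_{i=k}^\infty\tilde\tau_i$ is \emph{not} $\cF_k$-measurable: the $\tau_i$ (hence $\tilde\tau_i=\tau_i/c_{i+1}$) are only assumed $\cF_i$-measurable, so the tail sum anticipates the future. Consequently $(V_k)$ is not an adapted process, and the claimed inequality $\expec[V_{k+1}\mid\cF_k]\leq V_k-\tilde\zeta_k$ fails: writing it out, it would require $\expec\bigl[\sum_{i=k+1}^\infty\tilde\tau_i\mid\cF_k\bigr]\leq\sum_{i=k+1}^\infty\tilde\tau_i$ pointwise, which is false in general (and the left-hand side may even be $+\infty$, since only almost-sure finiteness of $\sum_i\tau_i$ is assumed, not integrability). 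Your construction would be fine if the $\tau_k$ were deterministic, but not here. The standard repair --- and what Robbins--Siegmund actually do --- is to compensate with the \emph{past} partial sum instead: set $Y_k=\tilde X_k-\sum_{i=1}^{k-1}\tilde\tau_i$, which is adapted and satisfies $\expec[Y_{k+1}\mid\cF_k]\leq Y_k-\tilde\zeta_k$, but is only bounded below by the random quantity $-\sum_i\tilde\tau_i$. One then stops at $\sigma_a=\inf\{k:\sum_{i\leq k}\tilde\tau_i>a\}$ (together with your truncation for $X_1$), so the stopped process is a supermartingale bounded below by $-a$; Doob applies, and letting $a\to\infty$ over the full-measure event $\{\sum_i\tilde\tau_i<\infty\}$ recovers both the almost-sure convergence of $\tilde X_k$ and, via the same telescoping you describe, $\sum_k\tilde\zeta_k<\infty$ a.s. With that substitution the rest of your argument (the $c_k$ bookkeeping and the conversion back from $\tilde X_k,\tilde\zeta_k$ to $X_k,\zeta_k$) goes through.
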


Now, we present a proof of \cref{thm:as} with the norm of $\Mnorm{\cdot}$.

\begin{proposition} \label{lem:as}
    Let $\opT\colon\cH\to\cH$ be $\theta$-averaged with respect to $\Mnorm{\cdot}$ with $\theta\in(0,1]$, and let $x^0, x^1, x^2, \dots$ be the iterates of \eqref{RC-FPI} and let $z^0, z^1, z^2, \dots$ be the iterates of \eqref{FPI_T}. Assume that the distribution of $\ve$ satisfies the uniform expected step-size condition \eqref{condition:P} with $\alpha\in (0,1]$ and \eqref{eq:beta-def} holds with some $\beta$ such that $\beta<\alpha/\theta$.
    Then ${x^k}/{k}$ is strongly convergent to $-\alpha \bv$ in probability $1$, $i.e.$
    \[
    \frac{ x^k} {k} \stackrel{a.s.}{\rightarrow} -\alpha \bv
    \]
    as $k\to\infty$, where $\bv$ is the infimal displacement vector of $\opT$.
\end{proposition}

\begin{proof}
    To use the Robbins-Siegmund quasi-martingale theorem \cref{lem:quasimartigale}, we cannot take full expectation to bound the extra terms in \cref{lem:L2_bound}.
    Here, we provide alternate way to bound the last two terms in \cref{lem:L2_bound}. 

    \begin{equation*}
        \begin{split}
            &-\alpha\theta \left( 1-\alpha \theta \right) \Mnorm{\opS x - \opS z}^2 
            + \theta^2 \left( \beta - \alpha^2\right) \Mnorm{\opS x}^2 \\
            &\hspace{0.5cm} = -  \theta \left(\alpha  -  \beta \theta \right)\Mnorm{\opS x}^2
            +2\alpha\theta \left( 1-\alpha \theta \right) \left< \opS x , \opS z \right>_M
            -\alpha\theta \left( 1-\alpha \theta \right) \Mnorm{ \opS z}^2 \\
            &\hspace{0.5cm} = - \theta \left(\alpha -  \beta \theta \right)\Mnorm{\opS x - \frac{\alpha-\alpha^2 \theta}{\alpha -  \beta\theta}\opS z}^2
            +  \theta \left(\alpha -  \beta \theta \right)\Mnorm{\frac{\alpha-\alpha^2 \theta}{\alpha -  \beta\theta} \opS z}^2
            -\alpha\theta \left( 1-\alpha \theta \right) \Mnorm{ \opS z}^2 \\ 
            &\hspace{0.5cm} = - \theta \left(\alpha -  \beta \theta \right)\Mnorm{\opS x - \frac{\alpha-\alpha^2 \theta}{\alpha -  \beta\theta}\opS z}^2
            + \underbrace{\frac{\alpha \theta^2 (1 - \alpha\theta) (\beta - \alpha^2)}{\alpha - \beta \theta}}_{=: B \geq 0} \Mnorm{ \opS z}^2 \\ 
            &\hspace{0.5cm} \leq B \Mnorm{ \opS z}^2 .
        \end{split}
    \end{equation*}
Note that this inequality only holds when $\beta < \alpha / \theta$. Also, $\beta - \alpha^2\geq 0$ comes from $\Mnorm{\expec\left[ u_\ve \right]}^2\leq \expec\left[ \Mnorm{u_\ve}^2 \right]$.

From \cref{lem:L2_bound} and an inequality $\frac{1}{k} < \frac{1}{k-1}$,
\begin{equation*}
    \begin{split}
        &\expec_{\ve^k  } \left[ \Mnorm{ \frac{ x^k} {k} - \frac{ z^k} {k} }^2 \mid \cF_ {k-1} \right] 
        \leq \Mnorm{ \frac{x^ {k-1}} {k-1} - \frac{z^ {k-1}} {k-1} }^2 + \frac{B}{k^2} \Mnorm{\opS z^ {k-1}}^2,
    \end{split}
\end{equation*}
where $x^0, x^1, x^2, \dots$ is a random sequence generated by \eqref{RC-FPI} with $\opT$, $z^0, z^1, z^2, \dots$ is a sequence generated by \eqref{FPI_T} and starting point $z^0 = x^0$, and $\cF_ {k}$ is a filtration consisting of information up to $n$th iteration.

    With $\Mnorm{\opS z^ {k-1}} \leq \Mnorm{\opS z^0}$ from the \cref{lem:bound_FPI},
    \begin{equation*}
    \begin{split}
        &\expec_{\ve^k  } \left[ \Mnorm{ \frac{ x^k} {k} - \frac{ z^k} {k} }^2 \mid \cF_ {k-1} \right] 
        \leq \Mnorm{ \frac{x^ {k-1}} {k-1} - \frac{z^ {k-1}} {k-1} }^2 + 
        \frac{B}{k^2} \Mnorm{\opS z^0}^2 .
    \end{split}
\end{equation*}

Now we may apply the Robbins-Siegmund quasi-martingale theorem, \cref{lem:quasimartigale}, and conclude that the random sequence $\Mnorm{ \frac{ x^k} {k} - \frac{ z^k} {k} }^2 $ converges almost surely to some random variable since $\sum_{n=1}^{\infty} n^{-2}B \Mnorm{\opS z^0}^2 < \infty$. Then,
\[
\expec \left[ \lim_{k\to\infty} \Mnorm{ \frac{ x^k} {k} - \frac{ z^k} {k} }^2 \right] \leq 
\lim_{k\to\infty} \expec \left[  \Mnorm{ \frac{ x^k} {k} - \frac{ z^k} {k} }^2 \right] = 0
\]
holds, where the inequality comes from the Fatou's lemma and the equality comes from $L^2$ convergence by taking $k\to \infty$ in \cref{lem:L2}. Thus, 
\[
\lim_{k\to\infty} \Mnorm{ \frac{ x^k} {k} - \frac{ z^k} {k} }^2  =0, \qquad a.s.
\]
happens, which, with the strong convergent ${ z^k}/ {k}$ to $-\alpha \bv$ by \cref{thm:pazy}, gives the almost sure convergence of \cref{lem:as}.
\end{proof}

\begin{proof} [Proof of \cref{thm:as}]
 In the case of $M=\opI$, with $\theta \in \left( 0,1\right)$, we have $\beta = \alpha < \alpha / \theta$ from \cref{lem:expec_norm}. Thus, from \cref{lem:as}, we can conclude
    \[
    \frac{ x^k} {k} \stackrel{a.s.}{\rightarrow} -\alpha \bv
    \]
    as $k\to\infty$.
\end{proof}

\subsection{Infeasibility detection.}
Since $\bv \ne 0$ implies the problem is inconsistent and ${x^k}/{k} \rightarrow - \alpha \bv$, we propose
\begin{equation}
\frac{1}{k}\|x^k\|>\varepsilon 
\label{eq:infeas-detection}
\end{equation}
as a test of inconsistency with sufficiently large $k\in \mathbb{N}$ and sufficiently small $\varepsilon>0$.
The remaining question of how to choose the iteration count $k$ and threshold $\varepsilon$ will be considered in \cref{sec:infeas_detect}.
(This test is not able to detect inconsistency in the pathological case where the problem is inconsistent despite $\bv=0$.)

\section{Bias and variance of normalized iterates}
\label{sec:variance}

Previously in \cref{sec:linearrate}, we showed that the normalized iterate $x^k/k$ of \eqref{RC-FPI} converges to the scaled infimal displacement vector $-\alpha\bv$. However, to use $x^k/k$ as an estimator of $-\alpha \bv$ and to use $x^k/k\ne 0 $ as a test for inconsistency, we need to characterize the error $\|x^k/k+\alpha \bv\|^2$. In this section, we provide an asymptotic upper bound of the bias and variance of $x^k/k$ as an estimator of $-\alpha\bv$.



\begin{theorem} \label{thm:tv}
    Let $\opT\colon\cH\to\cH$ be $\theta$-averaged with respect to $\Mnorm{\cdot}$ with $\theta\in (0,1]$.
    Let $\bv$ be the infimal displacement vector of $\opT$.
    Assume $\ve^0, \ve^1, \ldots$ is sampled IID from a distribution satisfying the uniform expected step-size condition \eqref{condition:P} with $\alpha\in (0,1]$, and assume \eqref{eq:beta-def} holds with some $\beta>0$ such that $\beta<\alpha/\theta$.
    Let $x^0,x^1,x^2,\ldots$ be the iterates of \eqref{RC-FPI}.
    \begin{itemize}
        \item[(a)]
        If $\bv\in\range(\opI-\opT)$, then as $k\to\infty$,
    \[
        \expec \left[ \Mnorm{ \frac{x^k}{k} + \alpha \bv }^2 \right] \lesssim \frac{(\beta-\alpha^2) \Mnorm{\bv}^2}{k}.
    \]
    \item[(b)]
    In general, regardless of whether $\bv$ is in $\range(\opI-\opT)$ or not,
    \[
\mvar \left(\frac{ x^k} {k} \right) 
	\lesssim\frac{(\beta-\alpha^2) \Mnorm{\bv}^2}{k}
    \]
    as $k\to\infty$.
    \end{itemize}
\end{theorem}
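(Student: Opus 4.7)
The plan is to compare the randomized iterate $x^k$ against the deterministic iterate $z^k$ of \eqref{FPI_T}, as in Sections~\ref{subsec:L2}--\ref{subsec:as}, but to exploit the sharper completed-square recasting of \cref{lem:L2_bound} that already appears inside the proof of \cref{lem:as}. Regrouping the last two terms of \cref{lem:L2_bound} gives
\begin{align*}
\expec\left[\Mnorm{x^{k+1}-z^{k+1}}^2 \mid \cF_k\right]
&\leq \Mnorm{x^k-z^k}^2 + B\,\Mnorm{\opS z^k}^2,
\end{align*}
where $B=\alpha\theta^2(1-\alpha\theta)(\beta-\alpha^2)/(\alpha-\beta\theta)$ is finite because $\beta<\alpha/\theta$ and non-negative because $\beta\geq\alpha^2$ by Jensen's inequality. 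Taking full expectation and telescoping,
\begin{align*}
\expec\left[\Mnorm{x^k-z^k}^2\right]
&\leq \Mnorm{x^0-z^0}^2 + B\sum_{j=0}^{k-1}\Mnorm{\opS z^j}^2.
\end{align*}
Both parts (a) and (b) then reduce to evaluating, or asymptotically bounding, the sum above for an appropriate choice of $z^0$.

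For (a), I would choose $z^0=x^\star$ with $\opS x^\star=\bv/\theta$, which exists by the hypothesis $\bv\in\range(\opI-\opT)$. Then $z^{k+1}=\Bar{\opT}z^k=z^k-\alpha\bv$, so $z^k=x^\star-k\alpha\bv$ and $z^k/k+\alpha\bv=x^\star/k$ has $M$-norm of order $1/k$. To evaluate the sum, I would sandwich $\Mnorm{\opS z^k}$ from above by $\Mnorm{\opS z^0}=\Mnorm{\bv}/\theta$ (using \cref{lem:bound_FPI} applied to the $\alpha\theta$-averaged $\Bar{\opT}$, where $\alpha\theta<1$ follows from $\alpha^2\leq\beta<\alpha/\theta$) and from below by $\Mnorm{\bv}/\theta$ (using \eqref{eq:inf-char} with $\theta\opS z^k\in\range(\opI-\opT)$). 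These coincide, so $\Mnorm{\opS z^k}=\Mnorm{\bv}/\theta$ for every $k$ and the sum equals $k\Mnorm{\bv}^2/\theta^2$. Dividing the telescoped bound by $k^2$ and applying the triangle inequality to $x^k/k+\alpha\bv=(x^k/k-z^k/k)+(z^k/k+\alpha\bv)$ produces
\begin{align*}
\expec\left[\Mnorm{\tfrac{x^k}{k}+\alpha\bv}^2\right]
&\leq \frac{2B\Mnorm{\bv}^2}{k\theta^2}+\bigO(1/k^2),
\end{align*}
which is the claimed $(\beta-\alpha^2)\Mnorm{\bv}^2/k$ rate since $B/\theta^2$ is a constant multiple of $\beta-\alpha^2$.

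For (b), I would take $z^0=x^0$, so $\Mnorm{x^0-z^0}=0$, and invoke $\mvar(x^k/k)\leq\expec[\Mnorm{x^k/k-c}^2]$ with the deterministic $c=z^k/k$. To bound $\sum_{j=0}^{k-1}\Mnorm{\opS z^j}^2$ tightly, I would appeal to the classical Baillon--Bruck asymptotic regularity for averaged operators: since $\Bar{\opT}$ is $\alpha\theta$-averaged with infimal displacement $\alpha\bv$, the monotone sequence $\Mnorm{\Bar{\opT}z^k-z^k}=\alpha\theta\Mnorm{\opS z^k}$ decreases down to $\alpha\Mnorm{\bv}$, so $\Mnorm{\opS z^k}\to\Mnorm{\bv}/\theta$. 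Cesàro averaging then gives $k^{-1}\sum_{j=0}^{k-1}\Mnorm{\opS z^j}^2\to\Mnorm{\bv}^2/\theta^2$, and dividing the telescoped bound by $k^2$ yields $k\cdot\expec[\Mnorm{x^k/k-z^k/k}^2]\to B\Mnorm{\bv}^2/\theta^2$, hence $\mvar(x^k/k)\lesssim(\beta-\alpha^2)\Mnorm{\bv}^2/k$ as $k\to\infty$.

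The main obstacle in both parts is controlling the residual $\Mnorm{\opS z^k}$ that the completed-square decomposition introduces on the right-hand side of the one-step recursion. In part (a) this is clean, because the pinch between \cref{lem:bound_FPI} and \eqref{eq:inf-char} forces $\Mnorm{\opS z^k}$ to be exactly constant, so the telescoped sum has a closed form and the bound is achieved for every finite $k$. In part (b) this exact constancy is lost; we must fall back on the standard but non-trivial asymptotic regularity of averaged operators together with Cesàro summation, and it is precisely this step that forces the conclusion of part (b) to be stated only asymptotically.
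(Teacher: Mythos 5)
Your recursion and telescoping are valid, and the route through the completed--square bound $U^k\leq B\Mnorm{\opS z^k}^2$ from the proof of \cref{lem:as} does give an $\bigO(1/k)$ rate. The gap is in the constant. The paper states immediately after the theorem that the precise meaning of (a) is
\[
\limsup_{k\to\infty} k\,\expec \left[ \Mnorm{ \tfrac{x^k}{k} + \alpha \bv }^2 \right]\leq (\beta-\alpha^2) \Mnorm{\bv}^2 ,
\]
so the constant $(\beta-\alpha^2)\Mnorm{\bv}^2$ is part of the claim (and it is what feeds into the iteration count of \cref{thm:detection}). With $\Mnorm{\opS z^k}=\Mnorm{\bv}/\theta$ your telescoped bound gives the constant
\[
\frac{B}{\theta^2}\Mnorm{\bv}^2=\frac{\alpha(1-\alpha\theta)}{\alpha-\beta\theta}\,(\beta-\alpha^2)\Mnorm{\bv}^2 ,
\]
and since $\beta\geq\alpha^2$ the prefactor $\frac{\alpha-\alpha^2\theta}{\alpha-\beta\theta}$ is $\geq 1$, strictly greater whenever $\beta>\alpha^2$, and it blows up as $\beta\theta\to\alpha$. (The extra factor $2$ from your triangle-inequality step is avoidable, since the cross term is $\bigO(k^{-3/2})$, but the prefactor above is not.) The same loss occurs in your part (b).

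The reason the completed square is too lossy here is that dropping the term $-\theta(\alpha-\beta\theta)\Mnorm{\opS x^k-\tfrac{\alpha-\alpha^2\theta}{\alpha-\beta\theta}\opS z}^2$ treats the worst case, namely $\opS x^k$ aligned with $\bv$ and of norm larger than $\Mnorm{\bv}/\theta$. The paper's proof rules this configuration out asymptotically: by \cref{lem:e.1} and \cref{lem:lim_orthogonal}, together with \cref{eq:inf-char}, one has $0\leq\left<\bv,\theta\opS x^k-\bv\right>_M\leq\sin\delta\,\Mnorm{\bv}\Mnorm{\theta\opS x^k-\bv}$ for large $k$, i.e.\ $\theta\opS x^k-\bv$ is almost orthogonal to $\bv$. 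This gives $\Mnorm{\theta\opS x^k}^2\lesssim\Mnorm{\bv}^2+\Mnorm{\theta\opS x^k-\bv}^2$, so the positive part of $U^k$ beyond $(\beta-\alpha^2)\Mnorm{\bv}^2$ is proportional to $\Mnorm{\opS x^k-\opS z}^2$ and is absorbed by the negative term using only $\alpha-\beta\theta>0$, yielding exactly $\limsup_k U^k\leq(\beta-\alpha^2)\Mnorm{\bv}^2$ (after sending $\delta\to0$ and $\Mnorm{\opS z-\bv/\theta}\to0$, with Fatou and a Ces\`aro average handling the random, sequence-dependent threshold $N_\delta$). That orthogonality argument is the essential ingredient your proposal is missing; without it you prove the right rate but a strictly weaker inequality than the one the theorem asserts. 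Your observation in (a) that $\Mnorm{\opS z^k}=\Mnorm{\bv}/\theta$ is pinched between \cref{lem:bound_FPI} and \cref{eq:inf-char} is correct and matches the paper's choice $z^0=x_\star$, and your use of Baillon--Bruck asymptotic regularity in (b) is legitimate but not needed once the sharper per-step bound is in hand.
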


To clarify, the precise meaning of the first asymptotic statement of (a) is
\[
\limsup_{k\to\infty} k \,\expec \left[ \Mnorm{ \frac{x^k}{k} + \alpha \bv }^2 \right]\leq  
 (\beta-\alpha^2) \Mnorm{\bv}^2.
\]
The precise meaning of the asymptotic statement of (b) is defined similarly. 
{
When the coordinates represent an orthonormal basis, we have $\Mnorm{\cdot}=\norm{\cdot}$. In this case, we can apply \cref{lem:expec_norm} to choose $\beta$ as $\alpha$. Then, with the uniform expected step-size condition \eqref{condition:P} and the assumption of $\theta \in (0,1)$, same asymptotic bounds of bias and variance can be obtained as ${(\alpha-\alpha^2) \norm{\bv}^2}/{k}$ with by plugging in $\beta\leftarrow \alpha$.
}

Here, we outline the proof for the special case, $\bv \in \range\left( \opI - \opT\right)$, while deferring the full proof 
to \cref{appendix:tv}. Let $z^0, z^1, z^2, \dots$ be the iterates of \eqref{FPI_T} with $z^0$ satisfying $\theta \opS z^0 = \bv$. Then, $\theta \opS z^k = \bv$ for all $k\in \mathbb{N}$.
Apply \cref{lem:L2_bound} on $x^k$ and $z^k$ and take full expectation to get
\begin{equation*}
    \begin{split}
        \expec \left[ k\Mnorm{\frac{ x^k} {k} - \frac{ z^k} {k} }^2\right]
        \leq
        \frac{1}{k} \Mnorm{x^0-z^0}^2 + \expec \left[ \frac{1} {k} \sum_{j=0}^ {k-1}U^j \right]
    \end{split}
\end{equation*}
where $U^0, U^1, U^2, \dots$ is a sequence of random variables : 
\begin{equation*}
    \begin{split}
        U^k = &-\alpha  \left( \theta^{-1} -\alpha \right) \Mnorm{\theta\opS x^k - \bv}^2+ \theta^2 \left( \beta - \alpha^2\right) \Mnorm{\opS x^k}^2.
    \end{split}
\end{equation*}

The key idea is to bound the $U^k$ terms. This can be done by showing that $\bv$ and $\theta\opS x^k - \bv$ are almost orthogonal. Such property is exhibited in the next two lemmas. 
{
Proofs of Lemmas~\ref{lem:e.1} and \ref{lem:lim_orthogonal} are presented in the \ref{appendix:variance.1}.
}

\begin{lemma} \label{lem:e.1}
   Suppose {$\theta$-averaged} operator $\opT\colon\cH\to \cH$ has an infimal displacement vector $\bv$.
   Consider a closed cone $C_\delta$ in $\cH$ with $\delta \in \left( 0,  {\pi}/{2} \right)$,
   which is a set of vectors whose angle between them and $\bv$ being less than ${\pi}/{2}-\delta$.
\[
C_\delta = \left\{ x: \left< \bv, x \right>_M\geq \sin\delta \Mnorm{\bv} \Mnorm{x} \right\}.
\]
When the points $y,z \in \cH$ satisfy that $\opS y \in \opS z + C_\delta$ and $\opS y \neq \opS z$, then the following inequality holds.
\[
\left< -\bv, y-z \right>_M\leq \cos \delta \Mnorm{\bv} \Mnorm{y-z}.
\]
\end{lemma}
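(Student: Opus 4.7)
The plan is to decompose $\bv$ into components parallel and orthogonal to $w := \opS y - \opS z$, then exploit the $\frac{1}{2}$-cocoercivity of $\opS$ to annihilate the parallel part. If $\bv = 0$ both sides of the claimed inequality vanish, so I will assume $\bv \neq 0$. The hypotheses $w \in C_\delta$ and $w \neq 0$ give $\langle \bv, w \rangle_M \geq \sin\delta \,\Mnorm{\bv}\Mnorm{w} > 0$, so in the decomposition $\bv = a w + \bv_\perp$ with $\bv_\perp \perp_M w$, the scalar $a = \langle \bv, w \rangle_M / \Mnorm{w}^2$ is strictly positive.

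The next step is to derive the norm bound $\Mnorm{\bv_\perp} \leq \cos\delta \,\Mnorm{\bv}$. By Pythagoras, $\Mnorm{\bv_\perp}^2 = \Mnorm{\bv}^2 - a^2 \Mnorm{w}^2$, and the lower bound on $\langle \bv, w\rangle_M$ rewrites as $a^2\Mnorm{w}^2 = \langle \bv, w\rangle_M^2/\Mnorm{w}^2 \geq \sin^2\delta \,\Mnorm{\bv}^2$; subtracting yields the bound.

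To finish, I would expand
\begin{equation*}
\langle -\bv, y - z \rangle_M = -a \langle w, y - z \rangle_M - \langle \bv_\perp, y - z \rangle_M.
\end{equation*}
The $\frac{1}{2}$-cocoercivity of $\opS$ gives $\langle w, y - z \rangle_M = \langle \opS y - \opS z, y - z \rangle_M \geq \frac{1}{2}\Mnorm{w}^2 \geq 0$, so together with $a > 0$ the first summand is non-positive and can be dropped. Cauchy--Schwarz applied to the second summand, combined with the norm bound $\Mnorm{\bv_\perp} \leq \cos\delta \,\Mnorm{\bv}$, delivers the desired inequality.

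The only conceptually subtle point — and the step I would flag as the crux — is recognizing that $\frac{1}{2}$-cocoercivity is exactly the tool that kills the $w$-component of $\bv$ in this pairing, so that only the orthogonal component of $\bv$, whose norm is controlled by $\cos\delta \,\Mnorm{\bv}$, ends up contributing. Everything else is elementary Euclidean geometry transported to the $M$-inner product.
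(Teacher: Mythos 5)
Your proof is correct, and it takes a genuinely different decomposition from the paper's. You split $\bv$ into its components parallel and orthogonal to $w=\opS y-\opS z$, use the cone condition to bound the orthogonal part by $\Mnorm{\bv_\perp}\leq\cos\delta\,\Mnorm{\bv}$, and let cocoercivity kill the parallel part in the pairing with $y-z$. The paper instead splits both $x=y-z$ and $u=\opS y-\opS z$ into components parallel and orthogonal to $\bv$, writes $\left<\bv,u\right>_M=\sin\phi\,\Mnorm{\bv}\Mnorm{u}$ for some $\phi\in[\delta,\pi/2]$, and then massages the cocoercivity inequality $\left<x,u\right>_M\geq 0$ into $\left<x,-\bv\right>_M\leq\cos\phi\,\Mnorm{\bv}\Mnorm{x}$ via a case split on the sign of $\left<x,-\bv\right>_M$ followed by a squaring step (the case split is exactly what makes the squaring legitimate). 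Both arguments use cocoercivity only through the nonnegativity $\left<\opS y-\opS z,\,y-z\right>_M\geq 0$; your stronger bound $\geq\tfrac12\Mnorm{w}^2$ is not actually needed. Your route is shorter and avoids both the case analysis and the sign-sensitive squaring; the paper's version records the marginally sharper constant $\cos\phi\leq\cos\delta$ in terms of the actual angle $\phi$, but that refinement is not exploited downstream, so nothing is lost by your approach.
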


\begin{lemma}
\label{lem:lim_orthogonal}
    Let $\opT\colon\cH\to\cH$ be a {$\theta$-averaged} operator with respect to $\Mnorm{\cdot}$. Let $\bv$ be the infimal displacement vector of $\opT$. Let $\opS = {\theta^{-1}}\left(\opI - \opT \right)$. 
    Consider a sequence $y^0,y^1,y^2,\dots$ in $\cH$ such that its normalized iterate converges strongly to $-\gamma \bv$,
    \[
\lim_{k\to \infty} \frac{y^k} {k} = -\gamma \bv,
    \]
    for some $\gamma>0$.
    Then, for any $\delta\in(0,\pi / 2)$ and $z\in\cH$, there exists $N_{\delta,z} \in \mathbb{N}$ such that, for all $k>N_{\delta, z}$,
    \[
\left< \bv, \opS y^k - \opS z \right>_M  \leq \Mnorm{\bv} \Mnorm{\opS y^k - \opS z} \sin \delta.
    \]
\end{lemma}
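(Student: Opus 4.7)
The plan is to use the $\tfrac{1}{2}$-cocoercivity of $\opS$ together with Cauchy--Schwarz to produce an upper bound on $\left< \bv,\, \opS y^k - \opS z^0 \right>_M$ whose controlling factor shrinks to $0$ under the hypothesis $y^k/k \to -\gamma \bv$.

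The case $\bv = 0$ is trivial since both sides of the claim vanish, so assume $\bv \neq 0$. Starting from the $\tfrac{1}{2}$-cocoercivity of $\opS$ applied to the pair $(y^k, z^0)$,
\[
0 \leq \left< \opS y^k - \opS z^0,\, y^k - z^0 \right>_M,
\]
I would substitute the decomposition $y^k - z^0 = -\gamma k \bv + (y^k + \gamma k \bv - z^0)$ and rearrange to obtain
\[
\gamma k \left< \opS y^k - \opS z^0,\, \bv \right>_M
\leq \left< \opS y^k - \opS z^0,\, y^k + \gamma k \bv - z^0 \right>_M.
\]
Applying Cauchy--Schwarz on the right and dividing by $\gamma k$ then yields
\[
\left< \opS y^k - \opS z^0,\, \bv \right>_M
\leq \Mnorm{\opS y^k - \opS z^0} \cdot \frac{1}{\gamma}\Mnorm{\frac{y^k}{k} + \gamma \bv - \frac{z^0}{k}}.
\]

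To close the argument, the coefficient $\tfrac{1}{\gamma}\Mnorm{y^k/k + \gamma \bv - z^0/k}$ tends to $0$ as $k \to \infty$ since $y^k/k \to -\gamma \bv$ by hypothesis and $z^0/k \to 0$ trivially. Choosing $N_{\delta, z^0}$ large enough that this coefficient is at most $\Mnorm{\bv}\sin\delta$ for all $k > N_{\delta, z^0}$ immediately gives the desired inequality. The dependence of $N$ on $z^0$ enters through the rate at which $z^0/k \to 0$, while the dependence on $\delta$ enters through the target threshold $\Mnorm{\bv}\sin\delta$; division by $\gamma$ is legitimate since $\gamma > 0$ is assumed. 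No serious obstacle is anticipated, the argument being essentially a one-step manipulation of the cocoercivity inequality combined with the normalized-iterate hypothesis.
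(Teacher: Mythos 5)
Your proof is correct, and it takes a genuinely different route from the paper's. The paper argues by contradiction: it introduces the closed cone $C_\delta$ of directions making angle at most $\tfrac{\pi}{2}-\delta$ with $\bv$, proves an auxiliary lemma (via an orthogonal decomposition of $y-z$ and $\opS y-\opS z$ along and against $\bv$) showing that $\opS y\in\opS z+C_\delta$ forces $\left<-\bv,\,y-z\right>_M\leq\cos\delta\,\Mnorm{\bv}\Mnorm{y-z}$, and then derives a contradiction with $\frac{y^{k_l}}{k_l}\to-\gamma\bv$ along a hypothetical bad subsequence. You instead use only the monotonicity consequence of $\tfrac12$-cocoercivity, $\left<\opS y^k-\opS z^0,\,y^k-z^0\right>_M\geq 0$, split $y^k-z^0=-\gamma k\bv+(y^k+\gamma k\bv-z^0)$, and apply Cauchy--Schwarz to the error term; the hypothesis $\frac{y^k}{k}\to-\gamma\bv$ then drives the coefficient to zero. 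Your treatment of the $\bv=0$ case is also correct (both sides vanish), and it matters, since your threshold $\Mnorm{\bv}\sin\delta$ is zero there. What your approach buys: it is shorter, it makes the auxiliary cone lemma unnecessary for this purpose, and it is quantitative --- $N_{\delta,z^0}$ is determined explicitly by the decay of $\frac{1}{\gamma}\Mnorm{\frac{y^k}{k}+\gamma\bv-\frac{z^0}{k}}$ below $\Mnorm{\bv}\sin\delta$, whereas the contradiction argument yields no such rate. What the paper's route buys is the geometric picture (the cone $D_{z,\delta}$) that the authors reuse later when discussing how the geometry of $\range(\opI-\opT)$ affects the asymptotic variance; your bound carries the same information but less visually.
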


Returning to the proof outline of \cref{thm:tv}, by the Inequality \ref{eq:inf-char} and \cref{lem:lim_orthogonal},
\[
0\leq\left< \bv, \theta\opS x^k - \bv \right>_M \leq  \Mnorm{\bv} \Mnorm{\theta \opS x^k - \bv} \sin \delta
\approx 0
\]
for small $\delta$.
Therefore, for $k$ large enough,
\begin{equation*}
    \begin{split}
\Mnorm{\theta \opS x^k}^2 \lesssim & 
\Mnorm{\bv}^2 +\Mnorm{\theta \opS x^k - \bv}^2 .
    \end{split}
\end{equation*}
Since $\theta \opS x^k \rightarrow  \bv$ as $k\rightarrow\infty$, we have
\begin{equation*}
    \begin{split}
        &U^k 
        \lesssim \left( \beta - \alpha^2\right) \Mnorm{\bv}^2.
    \end{split}
\end{equation*}
Finally, we conclude
 \begin{equation*}
    \begin{split}
&\limsup_{k\to\infty}\, \expec \left[ k\Mnorm{\frac{ x^k} {k} - \frac{ z^k} {k} }^2\right] 
\lesssim  \left( \beta - \alpha^2\right) \Mnorm{\bv}^2.
    \end{split}
\end{equation*}

Note that for (b), $k \mvar \left( {x^k}/{k}\right)$ can be bounded by \[
k \mvar \left( \frac{x^k}{k}\right) \leq \expec\left[ k\Mnorm{\frac{ x^k} {k} - \frac{ z^k} {k} }^2\right],
\]
and so is (a) with an extra $\bigO \left( 1/{\sqrt{k}}\right)$ term.

\subsection{Proof of \cref{thm:tv}} \label{appendix:tv}
\begin{proof}
When $x^0, x^1, x^2, \dots$ is a random sequence generated by {RC-FPI} of $\opT$ and $z^0, z^1, z^2, \dots$ is a sequence generated by {FPI} of $\bar{\opT}$ with $z^0 = z$, from \cref{lem:L2_bound}, it is already known that for all $k$,
\begin{equation*}
\begin{split}
&\expec_{\ve^{k}} \left[ \Mnorm{ \opT_{\ve^ {k}} x^ {k} - \bar{\opT}  z^k } ^2\right] 
\leq  \Mnorm{ x^k- z^k}^2 - \alpha \theta \left( 1 - \alpha \theta \right)  \Mnorm{ \opS  x^k -  \opS  z^k }^2 
+\theta^2 \left( \beta - \alpha^2 \right) \Mnorm{\opS  x^k}^2 .\\
\end{split}
\end{equation*}
For convenience, define 
\[
U^k =  - \alpha \theta \left(1- \alpha \theta \right)  \Mnorm{ \opS x^k -  \opS z^k }^2+\theta^2 \left( \beta - \alpha^2 \right) \Mnorm{\opS x^k}^2.
\]
Consequently, by taking a full expectation,
\begin{equation*}
\begin{split}
&\expec \left[ k \Mnorm{ \frac{x^ {k}}{ k} - \frac{  z^k }{ k} } ^2\right] 
\leq  \frac{1} {k}\Mnorm{x^0-z}^2 
+\expec \left[  \frac{1} {k}\sum_{j=0}^ {k-1} U^j \right] .\\
\end{split}
\end{equation*}

The key of this proof is to bound the term $\Mnorm{\opS  x^k}^2$ in $U^k$ using \cref{lem:lim_orthogonal}, since ${ x^k}/ {k}$ is strongly convergent, i.e., $\lim_{k\to \infty} { x^k} /{k} = -\alpha \bv$ almost surely.
Suppose the case where $\lim_{k\to \infty} { x^k}/ {k} = -\alpha \bv$ holds, which actually does hold almost surely. For such $ x^k$, for an arbitrary $\delta \in \left(0, {\pi}/{2}\right)$, there exists a $N_{\delta,z}$ such that for all $k>N_{\delta, z}$,
\[
\left< \bv, \opS x^k - \opS z \right>_M\leq \sin\delta \Mnorm{\bv} \Mnorm{\opS x^k - \opS z}.
\]
From the inequality above, for all sufficiently large $k>N_{\delta, z}$,
\begin{equation*}
\begin{split}
\Mnorm{\opS x^k}^2 
&= \Mnorm{\opS x^k - \opS z}^2 +2\left<\opS x^k - \opS z, \opS z \right>_M +\Mnorm{\opS z}^2 \\
&= \Mnorm{\opS x^k - \opS z}^2 +2\left<\opS x^k - \opS z, \opS z - \frac{1}{\theta}\bv \right>_M  + 2\left<\opS x^k - \opS z,  \frac{1}{\theta}\bv \right>_M+\Mnorm{\opS z}^2 \\
&\leq \Mnorm{\opS x^k - \opS z}^2  +2 \left\{ \Mnorm{ \opS z -\frac{1}{\theta}\bv} + \sin \delta \Mnorm{\frac{1}{\theta}\bv} \right\}\Mnorm{\opS x^k - \opS z} +\Mnorm{\opS z}^2 .\\
\end{split}
\end{equation*}
 By the inequality above, the term $U^k$ can be bounded as :
\begin{equation*}
\begin{split}
U^k & \leq -\left(\alpha \theta - \alpha^2 \theta^2 \right)\Mnorm{ \opS x^k -\opS z^k }^2 \\
& \hspace{0.5cm} + \theta^2\left( \beta - \alpha^2 \right) \left[ 
\Mnorm{\opS x^k - \opS z}^2  +2 \left\{ \Mnorm{ \opS z -\frac{1}{\theta}\bv} + \sin \delta \Mnorm{\frac{1}{\theta}\bv} \right\}\Mnorm{\opS x^k - \opS z} +\Mnorm{\opS z}^2 \right]\\
&  = \theta^2\left( \beta - \alpha^2 \right) \Mnorm{\opS z}^2
-\theta \left( \alpha -  \beta \theta \right)  \Mnorm{ \opS x^k -  \opS z }^2 \\
&\hspace{0.5cm}+2\theta^2 \left( \beta - \alpha^2 \right) \left\{ \Mnorm{ \opS z -\frac{1}{\theta}\bv} + \sin \delta \Mnorm{\frac{1}{\theta}\bv} \right\}\Mnorm{\opS x^k - \opS z}\\
&\hspace{0.5cm} +\left(\alpha \theta - \alpha^2 \theta^2 \right) \left\{   \Mnorm{ \opS x^k -  \opS z }^2  - \Mnorm{ \opS x^k -  \opS z^k }^2 \right\}. \\
\end{split}
\end{equation*}
Here, the term $\Mnorm{ \opS x^k -  \opS z }^2  - \Mnorm{ \opS x^k -  \opS z^k}^2$ is bounded above by
\begin{equation*}
    \begin{split}
        &\Mnorm{ \opS x^k -  \opS z }^2 -\Mnorm{ \opS x^k -   \opS z^k }^2 \\
&\hspace{0.5cm}\le \Mnorm{ \opS x^k -  \opS z }^2 -\left( \Mnorm{ \opS x^k -   \opS z }^2 + \Mnorm{ \opS z -   \opS z^k }^2 + 2\left<\opS x^k -   \opS z  , \opS z -   \opS z^k   \right>_M\right) \\
& \hspace{0.5cm}\le -\Mnorm{ \opS z-   \opS z^k }^2 + 2\Mnorm{ \opS x^k -  \opS z}\Mnorm{ \opS z-   \opS z^k},
    \end{split}
\end{equation*}
from the triangular inequality. 

Thus, we can bound $U^k$ as 
\begin{equation*}
\begin{split}
U^k &  \leq \theta^2\left( \beta - \alpha^2 \right) \Mnorm{\opS z}^2
-\theta\left( \alpha -  \beta\theta \right)  \Mnorm{ \opS x^k -  \opS z }^2 \\
&\hspace{0.5cm}+2\theta^2 \left( \beta - \alpha^2 \right) \left\{ \Mnorm{ \opS z -\frac{1}{\theta}\bv} + \sin \delta \Mnorm{\frac{1}{\theta}\bv} \right\}\Mnorm{\opS x^k - \opS z}\\
&\hspace{0.5cm} +\left(\alpha \theta - \alpha^2 \theta^2 \right) 
\left\{  
-\Mnorm{ \opS z-\opS z^k }^2 +  2\Mnorm{ \opS x^k -  \opS z}\Mnorm{ \opS z-   \opS z^k}
\right\} \\
& =  \theta^2\left( \beta - \alpha^2 \right) \Mnorm{\opS z}^2 -\left(\alpha \theta - \alpha^2 \theta^2 \right)    \Mnorm{ \opS z-  \opS  z^k }^2 \\
&\hspace{0.5cm}-\theta\left( \alpha -  \beta \theta \right)  \Mnorm{ \opS x^k -  \opS z }^2 
+2\theta \tau_{\delta, z, k} \Mnorm{\opS x^k - \opS z}\\
& \le  \theta^2\left( \beta - \alpha^2 \right) \Mnorm{\opS z}^2 \\
&\hspace{0.5cm}-\theta\left( \alpha -  \beta \theta \right)  \Mnorm{ \opS x^k -  \opS z }^2 
+2\theta \tau_{\delta, z, k} \Mnorm{\opS x^k - \opS z},\\
\end{split}
\end{equation*}
where $\tau_{\delta, z, k}$ is defined as
\[
\tau_{\delta, z, k} =  \left( \beta - \alpha^2 \right) \left( \Mnorm{\theta \opS z - \bv} + \sin \delta \Mnorm{\bv} \right) 
+ \left(\alpha-\alpha^2 \theta \right)  \Mnorm{ \opS z-  \opS  z^k }.
\]

To make an upper bound of $\tau_{\delta, z, k}$ regardless of $k$, an upper bound of $\Mnorm{ \opS z-  \opS z^k }$ independent from $k$ is required. Since $\bv$ is the infimal displacement vector, 
\[
\left< \opS z^k-\frac{1}{\theta}\bv, \bv  \right>_M\geq 0, 
\quad \Mnorm{  \opS  z }^2 -\Mnorm{  \frac{1}{\theta}\bv}^2 \geq 0,
\]
hold. Recall that $z^0=z$. With $\Mnorm{\opS z^k} \leq \Mnorm{\opS z^0}= \Mnorm{\opS z}$ from \cref{lem:bound_FPI}, such uniform upper bound can be built as 
\begin{equation*}
\begin{split}
\Mnorm{ \opS z-  \opS z^k} 
&\leq  \Mnorm{ \opS z- \frac{1}{\theta}\bv } + \Mnorm{  \opS z^k -\frac{1}{\theta}\bv} \\
&=  \Mnorm{ \opS z- \frac{1}{\theta}\bv } + 
\sqrt{
\Mnorm{\opS  z^k}^2
-2\left< \opS  z^k, \frac{1}{\theta}\bv \right>_M
+\Mnorm{  \frac{1}{\theta}\bv}^2 } \\
&=  \Mnorm{ \opS z- \frac{1}{\theta}\bv } + 
\sqrt{
\Mnorm{\opS  z^k}^2
-2\left< \opS  z^k - \frac{1}{\theta}\bv, \frac{1}{\theta}\bv \right>_M
-\Mnorm{  \frac{1}{\theta}\bv}^2 } \\
&\leq  \Mnorm{ \opS z- \frac{1}{\theta}\bv } + 
\sqrt{\Mnorm{  \opS  z }^2 -\Mnorm{  \frac{1}{\theta}\bv}^2}.
\end{split}
\end{equation*}
Now define $\tilde{\tau}_{\delta, z}$ as
\[
\tilde{\tau}_{\delta, z}
= \left( \beta - \alpha^2 \right) \left( \Mnorm{\theta \opS z - \bv} + \sin \delta \Mnorm{\bv} \right) 
+ \left(\alpha-\alpha^2 \theta \right)  \left\{ 
\Mnorm{ \opS z- \frac{1}{\theta}\bv } + 
\sqrt{\Mnorm{  \opS  z }^2 -\Mnorm{  \frac{1}{\theta}\bv}^2}
\right\},
\]
then we have $\tau_{\delta, z, k} \leq \tilde{\tau}_{\delta, z}$ for any $k\in \mathbb{N}$.

Thus, we can bound  $U^k$ as 
\begin{equation*}
    \begin{split}
    U^k 
        &\leq \theta^2\left( \beta - \alpha^2 \right) \Mnorm{\opS z}^2
        -\theta\left( \alpha -  \beta \theta \right)  \Mnorm{ \opS x^k -  \opS z }^2 
        +2\theta \tau_{\delta, z, k} \Mnorm{\opS x^k - \opS z} \\
        &\leq \theta^2\left( \beta - \alpha^2 \right) \Mnorm{\opS z}^2
        -\theta\left( \alpha -  \beta \theta \right)  \Mnorm{ \opS x^k -  \opS z }^2 
        +2\theta \tilde{\tau}_{\delta, z} \Mnorm{\opS x^k - \opS z}. \\
    \end{split}
\end{equation*}

From the fact that $-at^2+2bt \leq {b^2}/{a}$ for any $a,b>0$ and $t\in\reals$, $U^k$ has an upper bound which is completely independent from $x^k$ and $k$.
\[
U^k \leq \theta^2\left( \beta - \alpha^2 \right) \Mnorm{\opS z}^2  + \frac{ \theta}{\alpha-\beta\theta}
 \tilde{\tau}_{\delta, z}^2
\]

However, this upper bound holds only at $k>N_{\delta, z}$. Since $N_{\delta, z}$ depends on the choice of the sequence $x^0, x^1, x^2, \dots$, such upper bound only works when the sequence $x^0, x^1, x^2, \dots$ is fixed. To avoid this problem, take a limit supremum of $U^k$ over $k$,
\begin{equation*}
\begin{split}
\limsup_{k\to\infty} U^k 
& \leq  \theta^2\left( \beta - \alpha^2 \right) \Mnorm{\opS z}^2  
+ \frac{ \theta}{\alpha-\beta\theta}
 \tilde{\tau}_{\delta, z}^2.
\end{split}
\end{equation*}
Furthermore, due to Ces\`{a}ro mean,
\begin{equation*}
\begin{split}
\limsup_{k\to\infty} \left\{  \frac{1} {k}\Mnorm{x^0-z}^2 + \frac{1} {k}\sum_{j=0}^ {k-1} U^j \right\} 
\leq \theta^2\left( \beta - \alpha^2 \right) \Mnorm{\opS z}^2  
+ \frac{ \theta}{\alpha-\beta\theta}
 \tilde{\tau}_{\delta, z}^2.\\
\end{split}
\end{equation*}

Now, this inequality always holds with only one condition on the choice of sequence $x^0, x^1, x^2 \dots,$
\[
\lim_{k\to \infty} \frac{ x^k} {k} = -\alpha \bv,
\]
 regardless of $z, \delta$. Since $\lim_{k\to \infty} { x^k}/ {k} = -\alpha \bv$ holds almost surely, 
\[
\expec \left[ 
\limsup_{k\to\infty} \left\{  \frac{1} {k}\Mnorm{x^0-z}^2 + \frac{1} {k}\sum_{j=0}^ {k-1} U^j \right\} 
 \right]
 \leq \theta^2\left( \beta - \alpha^2 \right) \Mnorm{\opS z}^2  
+ \frac{ \theta}{\alpha-\beta\theta}
 \tilde{\tau}_{\delta, z}^2
\]
holds almost surely for any $z, \delta$.
 
By Fatou's lemma, we also have
\[
 \limsup_{k\to\infty} \expec \left[ 
   \frac{1} {k}\Mnorm{x^0-z}^2 + \frac{1} {k}\sum_{j=0}^ {k-1} U^j  
 \right] 
 \leq \expec \left[ 
\limsup_{k\to\infty} \left\{  \frac{1} {k}\Mnorm{x^0-z}^2 + \frac{1} {k}\sum_{j=0}^ {k-1} U^j \right\} 
 \right].
\]

Thus, $\limsup_{k\to\infty}\expec \left[ k \Mnorm{ \frac{x^ {k}} {k} - \frac{  z^k } {k} } ^2\right]  $ has a upper bound of
 \begin{equation} \label{eq:limsup_bound}
 \begin{split}
\limsup_{k\to\infty}\expec \left[ k \Mnorm{ \frac{x^ {k}} {k} - \frac{  z^k } {k} } ^2\right] 
&\leq \limsup_{k\to\infty} \expec \left[ 
   \frac{1} {k}\Mnorm{x^0-z}^2 + \frac{1} {k}\sum_{j=0}^ {k-1} U^j  
 \right] \\
&\leq \theta^2\left( \beta - \alpha^2 \right) \Mnorm{\opS z}^2  
+ \frac{ \theta}{\alpha-\beta\theta}
 \tilde{\tau}_{\delta, z}^2.
\end{split}
\end{equation}

\textbf{Proof of statement (b).}
First, let's prove the statement (b) of \cref{thm:tv}. Since 
\[
\limsup_{k\to\infty} k\mvar\left(\frac{ x^k } {k}\right) 
\leq \limsup_{k\to\infty}\expec \left[ k \Mnorm{ \frac{x^ {k}} {k} - \frac{  z^k } {k} } ^2\right],
\]
from \eqref{eq:limsup_bound} we have
\[
\limsup_{k\to\infty} k\mvar\left(\frac{ x^k } {k}\right) 
\leq \theta^2\left( \beta - \alpha^2 \right) \Mnorm{\opS z}^2  
+ \frac{ \theta}{\alpha-\beta\theta}
 \tilde{\tau}_{\delta, z}^2.
\]

At start, we chose $z$ and $\delta$ arbitrarily. Since $\bv$ is the infimal displacement vector, there exists a sequence of $z$'s that allows us to take a limit $\Mnorm{\opS z - {\theta^{-1}}\bv} \to 0$. When we take a limit $\Mnorm{\opS z - {\theta^{-1}}\bv} \to 0$ and $\delta \to 0$,
\[
\lim_{\Mnorm{\opS z - \frac{1}{\theta}\bv} \to 0, \delta \to 0} \tilde{\tau}_{\delta, z} =0.
\]

Since $\limsup_{k\to\infty} k\mvar\left({ x^k}/ {k}\right)$ is independent from $\delta$ and $z$, by $\Mnorm{\opS z - {\theta^{-1}}\bv} \to 0$ and $\delta \to 0$ we have 
\begin{equation*}
\limsup_{k\to\infty} k\mvar\left(\frac{ x^k} {k}\right)\leq\left( \beta - \alpha^2 \right) \Mnorm{\bv}^2.
\end{equation*}

\textbf{Proof of statement (a).}
Next, to prove the statement (a) of \cref{thm:tv}, let's start again from inequality \eqref{eq:limsup_bound},  
 \begin{equation*} 
 \begin{split}
\limsup_{k\to\infty}\expec \left[ k \Mnorm{ \frac{x^ {k}} {k} - \frac{  z^k } {k} } ^2\right] 
&\leq \theta^2\left( \beta - \alpha^2 \right) \Mnorm{\opS z}^2  
+ \frac{ \theta}{\alpha-\beta\theta}
 \tilde{\tau}_{\delta, z}^2.
\end{split}
\end{equation*}
Expand the term $\Mnorm{ \frac{x^k}{k} + \alpha \bv }^2$ as 
\[
\expec \left[ \Mnorm{ \frac{x^k}{k} + \alpha \bv }^2 \right]
=\expec \left[ \Mnorm{ \frac{x^k}{k} -\frac{z^k}{k} }^2 \right]
+2 \left<\expec \left[\frac{x^k}{k} -\frac{z^k}{k}\right], \frac{z^k}{k} +\alpha \bv \right>_M
+\Mnorm{ \frac{z^k}{k} +\alpha \bv}^2 .
\]
From \cref{lem:L2} we have
\begin{equation*}
    \begin{split}
        \Mnorm{\expec \left[ \frac{x^k}{k} -\frac{z^k}{k}\right]}^2
&\leq \expec \left[ \Mnorm{\frac{x^k}{k} -\frac{z^k}{k}}^2\right]  \\
&\leq\frac{1} {k} \left( 1-\alpha \theta \right) \left[ 2\sqrt{\alpha\theta}  \Mnorm{\opS x^0}\Mnorm{\opS z^0}
- \frac{\alpha}{\theta}  \Mnorm{\bv}^2 \right]  +\frac{1}{k^2}\Mnorm{x^0 - z^0}^2.
    \end{split}
\end{equation*}

When $x_\star$ is a point such that $x_\star-\opT x_\star = v$, set $z^0 = x_\star$. Then,
\[
z^k = -k \alpha \bv + x_\star,
\]
since $\Mnorm{\theta \opS z^k} \leq\Mnorm{\theta \opS z} = \Mnorm{\bv}$ makes $\theta \opS z^k = \bv$ for all $k \in \mathbb{N}$. Thus, 
\begin{equation*}
    \begin{split}
        &\expec \left[ \Mnorm{ \frac{x^k}{k} + \alpha \bv }^2 \right]^2
\leq \expec \left[ \Mnorm{ \frac{x^k}{k} -\frac{z^k}{k} }^2 \right] \\
&\hspace{0.5cm}+2 \frac{1}{\sqrt{k}}\sqrt{\left( 1-\alpha \theta \right) \left[ 2\sqrt{\alpha\theta}  \Mnorm{\opS x^0}\Mnorm{\opS x_\star}
- \frac{\alpha}{\theta}  \Mnorm{\bv}^2 \right]  +\frac{1}{k}\Mnorm{x^0 - z^0}^2} \Mnorm{\frac{x_\star}{k}}
+\Mnorm{\frac{x_\star}{k}}^2 .
    \end{split}
\end{equation*}
Note that the last two terms are $\bigO\left(k^{-3/2}\right)$. By taking $\limsup$ as $k\to \infty$,
\[
\limsup_{k\to\infty} \expec \left[ k \Mnorm{ \frac{x^k}{k} + \alpha \bv }^2 \right]
\leq
\limsup_{k\to\infty}\expec \left[ k \Mnorm{ \frac{x^ {k}} {k} - \frac{  z^k } {k} } ^2\right].
\]

Thus,
 \begin{equation*} 
\limsup_{k\to\infty}\expec \left[ k \Mnorm{ \frac{x^ {k}} {k} +\alpha \bv } ^2\right] 
\leq \theta^2\left( \beta - \alpha^2 \right) \Mnorm{\opS x_\star}^2  
+ \frac{ \theta}{\alpha-\beta\theta}
 \tilde{\tau}_{\delta, x_\star}^2.
\end{equation*}
Since $\theta \opS x_\star = \bv$, by $\delta \to 0$, we have $\tilde{\tau}_{\delta, x_\star} \to 0$ and
 \begin{equation*} 
\limsup_{k\to\infty}\expec \left[ k \Mnorm{ \frac{x^ {k}} {k} +\alpha \bv } ^2\right] 
\leq \left( \beta - \alpha^2 \right) \Mnorm{\bv}^2.
\end{equation*}

\end{proof}

\subsection{Tightness of variance bounds} \label{sec:experiments}

In this section, we provide examples for which the variance bound of \cref{thm:tv} holds with equality and with a strict inequality. We then discuss how the geometry of $\range(\opI-\opT)$ influences the tightness of the inequality.
Throughout this section, we consider the setting where the norm and inner product is $\| \cdot\|$-norm and $\langle \cdot, \cdot\rangle$, with $\cH = \RR^m$, $\cH_i = \RR$, and $\ve$ follows uniform distribution on the set of standard unit vectors of $\cH$.
In this case, the smallest $\beta$ we can choose is $\alpha = 1/m$.

\subsubsection{Example: \cref{thm:tv}(b) holds with equality.}
Consider the translation operator $\opT(x)=x-\bv$.
When $x^0, x^1, x^2, \dots $ are the iterates of \eqref{RC-FPI} with $\opT$, then
\[
k \mvar \left( \frac{x^k} {k} \right) = \alpha \left(1-\alpha \right) \norm{\bv}^2
\]
for $k=1,2,\dots$, and the variance bound of \cref{thm:tv} holds with equality. 

{
Furthermore, for any choice of the distribution of $\mathcal{I}$ satisfying the uniform expected step-size condition \eqref{condition:P}, we can always find a translation operator whose $k \mathrm{Var}_M  \left(\frac{x_k}{k}\right)$ exactly obtaining the upper bound of \cref{thm:tv}.
Fixing the distribution of $\mathcal{I}$, and choosing the smallest possible $\beta$, there exists $u\neq 0$ such that
\[
\expec[\Mnorm{u_\ve}^2] = \beta \Mnorm{u}^2,
\]
and \eqref{RC-FPI} with the translation operator $\opT: x\mapsto x-u$ leads to the variance
\[
k\mvar\left( \frac{x_k}{k}\right) = \mvar(u_\ve) = \left(\beta - \alpha^2\right) \Mnorm{u}^2 = \left(\beta - \alpha^2\right) \Mnorm{\bv}^2.
\]
}

\subsubsection{Example: \cref{thm:tv}(b) holds with strict inequality.}
Define $\opT\colon\RR^2 \to \RR^2$ as   
\[
    \opT \colon \left( x, y \right) \mapsto \left( x-\frac{1+x-y}{2},\, y-\frac{1+y-x}{2}\right),
\]
which is {${1}/{2}$-averaged} and has the infimal displacement vector $\left( {1}/{2}, {1}/{2}\right)$. 

When $(x^0, y^0), (x^1,y^1), (x^2,y^2), \dots $ are the iterates of \eqref{RC-FPI} with $\opT$, then
\begin{equation} \label{eq:5.1}
    \limsup_{k\to\infty}\, k \mvar \left(\frac{\left( x^k, y^k\right)} {k} \right) = \frac{1}{24}.
\end{equation}

On the other hand, the right hand side of the inequality in \cref{thm:tv} (b) is
\[
\alpha \left(1-\alpha \right) \norm{\bv}^2 =\frac{1}{2}\left(1-\frac{1}{2}\right) \norm{\bv}^2=\frac{1}{8}.
\]
{
The detailed computation of the \cref{eq:5.1} is presented in the \ref{appendix:variance.2}.
}

\subsection{Relationship between the variance and the range set.}

Consider the three convex sets $A$, $B$, and $C$ in \cref{fig:convex_sets} as a subset of $\cH = \RR^2$.
The explicit definitions are
\begin{equation*}
    \begin{split}
        & A = \left\{ (x,y) \mid x \leq -10,\, y \leq -5 \right\} \\
        & B = \left\{ (x,y) \mid \dist\left( (x,y), 3A \right) \leq 2\sqrt{5^2 + 10^2} \right\}\\
        & C = \left\{ (x,y) \mid -2x -y \geq 25 \right\},
    \end{split}
\end{equation*}
where $\dist\left( (x,y), 3A \right)$ denotes the (Euclidean) distance of $(x,y)$ to the set $3A=\{(3x,3y)\,|\,(x,y)\in A\}$. The minimum norm elements in each set are all identically equal to $(-10, -5)$.

Let $\opT = \opI-\theta\Proj$, where $\Proj$ denotes the projections onto $A$, $B$, and $C$.
Then $\opT$ is $\theta$-averaged and $\range(\theta^{-1}(\opI - \opT))$ is equal to $A$, $B$, and $C$, respectively.
These sets are designed for $\opT$ to have the same infimal displacement vector.
\cref{fig:variance} (left), shows that the normalized iterates of the three instances have different asymptotic variances despite identical $\bv$. 
In the experiment, $\theta$ was set as $0.2$, and as a consequence, $\bv = (-2,-1)$ is the infimal displacement vector for each experiments.
\eqref{RC-FPI} is performed with $x^0 = (0,0)$, $m=2$ and $\cH_1 = \cH_2 = \RR$.

\begin{figure}[htbp]
  \centering
  \vspace{-5pt}
  \subfigure[$A$]{\includegraphics[width=0.25\textwidth]{./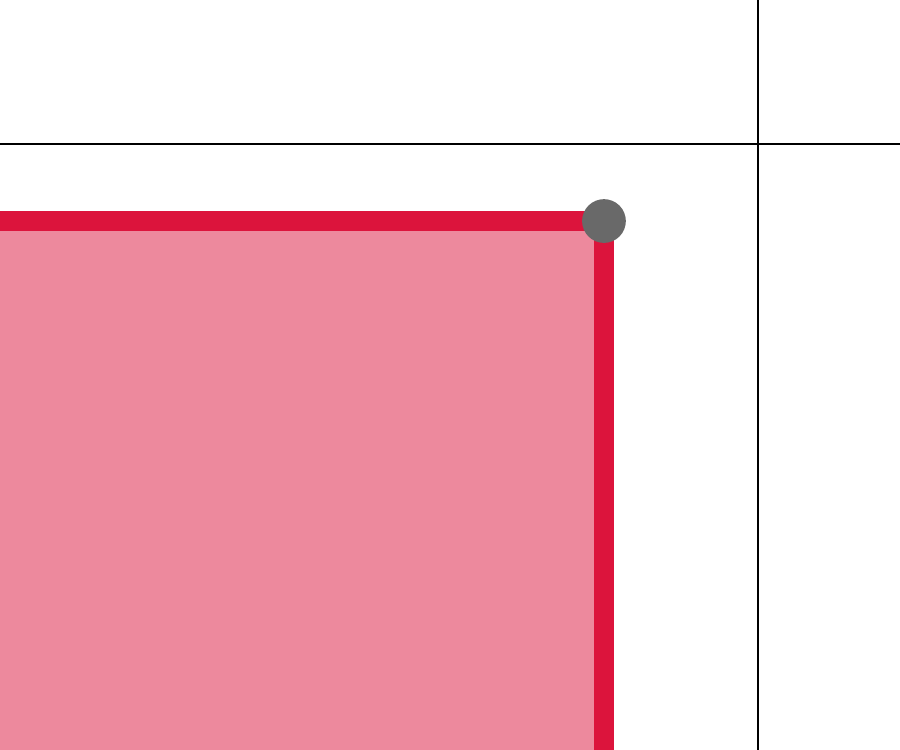}}
  \hspace{.02\textwidth}
  \subfigure[$B$]{\includegraphics[width=0.25\textwidth]{./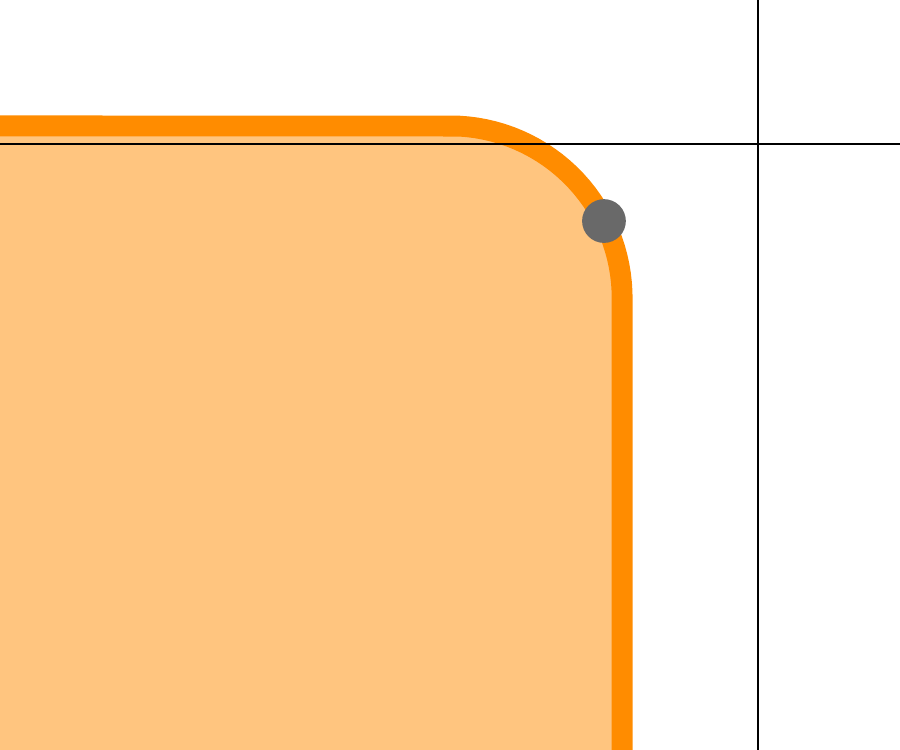}}
  \hspace{.02\textwidth}
  \subfigure[$C$]{\includegraphics[width=0.25\textwidth]{./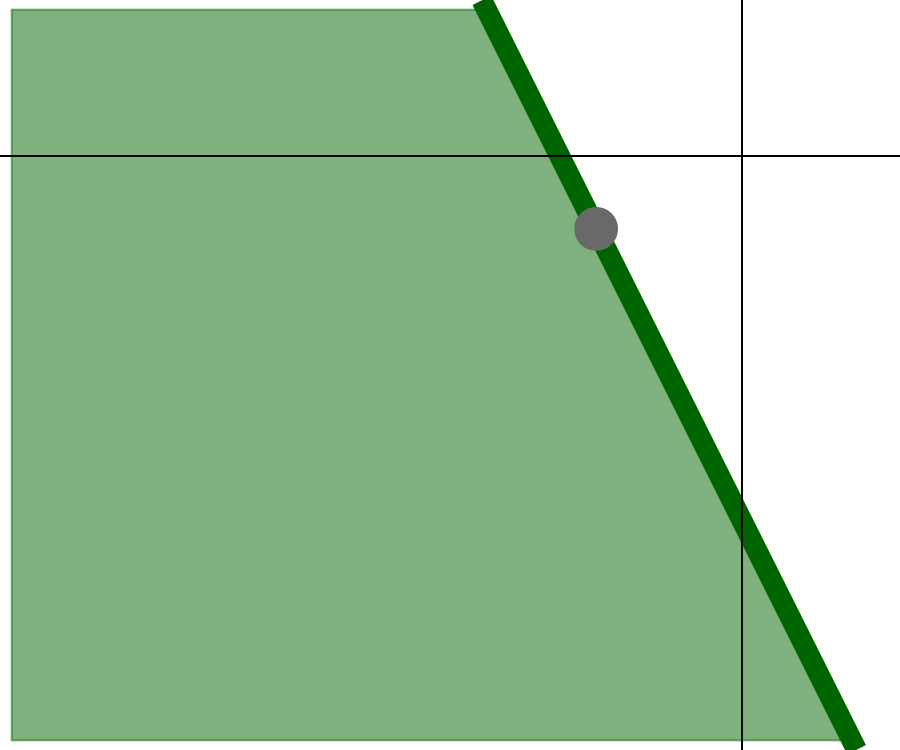}}
  \vspace{-5pt}
  \caption{Visualization  $A$, $B$, and $C$ as defined in  \cref{sec:experiments}. The grey dot is $\theta^{-1}\bv$, where $\bv$ is the infimal displacement vector of $\opT = \opI-\theta\Proj$.
  }
  \label{fig:convex_sets}
\end{figure}



\begin{figure*}[htbp]
\centering
\raisebox{-0.5\height}{\includegraphics[width=0.45\textwidth]{./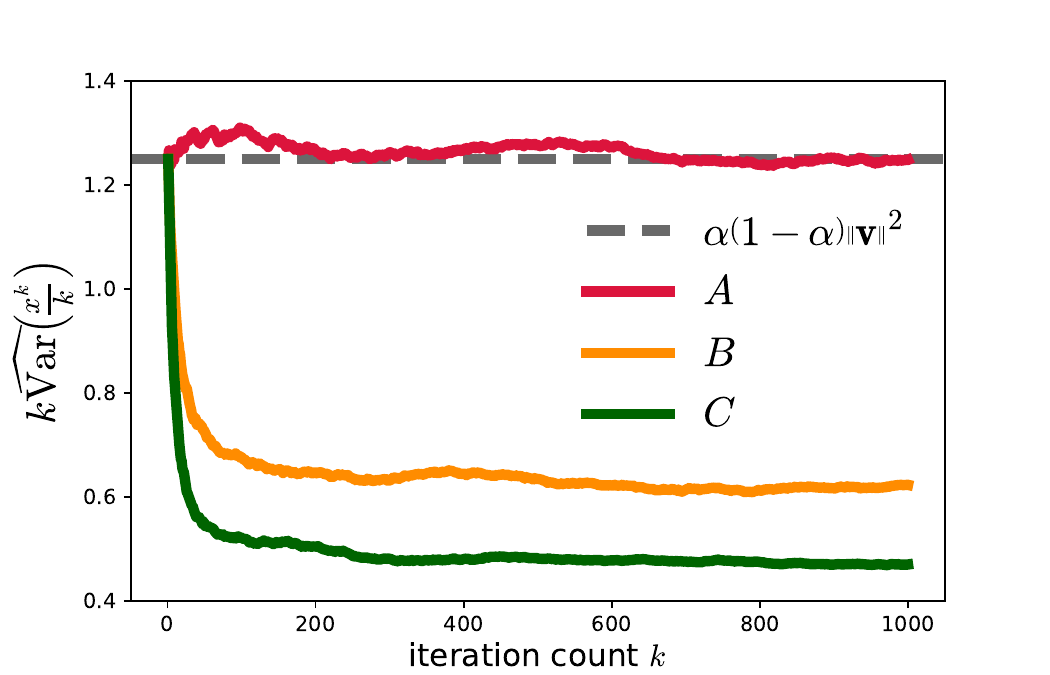}}
  \hspace{.05\textwidth}
 \raisebox{-0.5\height}{\includegraphics[width=0.35\textwidth]{./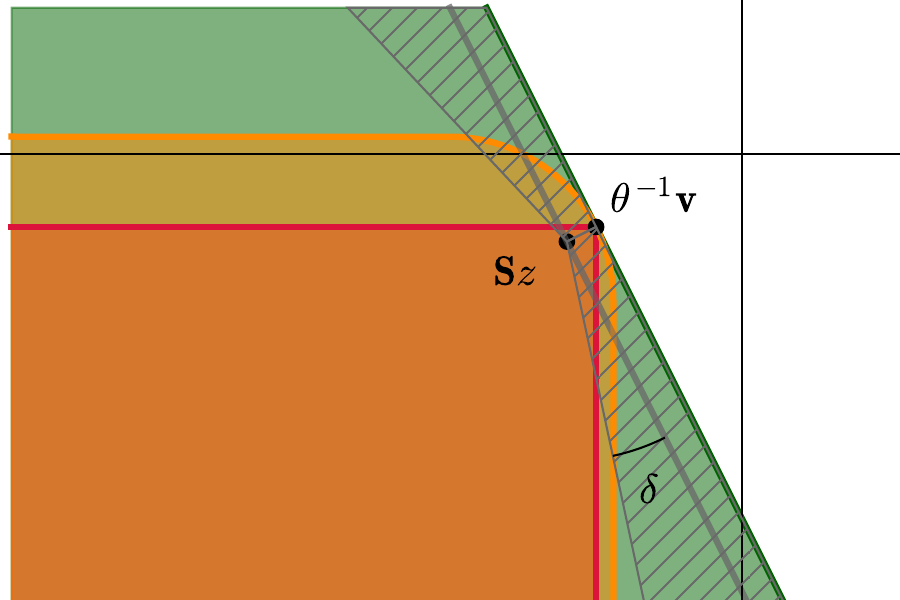}}
  \caption{
      (Left) Graph of $k \widehat{\mathrm{Var}}\left( {x^k}/{k}\right)$ by $k$, where $\widehat{\mathrm{Var}}\left( {x^k}/{k}\right)$ is the variance estimate with 10,000 samples. 
      (Right) Visualization of  $A$, $B$, and $C$ as red, yellow, green regions and $D_{z,\delta}$ as the hatched area, where the sets are as defined in  \cref{sec:experiments}.
      We conjecture that the broader intersection with $D_{z,\delta}$ leads to smaller asymptotic variance. (For interpretation of the colors in the figures, the
reader is referred to the web version of this article.)
      }
  \label{fig:variance}
\end{figure*}





We conjecture that the asymptotic variance is intimately related to the geometry of the set $\range(\theta^{-1}(\opI - \opT))$.
For $z\in \mathbb{R}^n$ and $\delta>0$, let
\[
D_{z,\delta}=     \left\{ u\in \RR^2 \middle| \left< \bv, u- \opS z \right>\leq \norm{\bv} \norm{u- \opS z}\sin \delta \right\}.
\]
\cref{lem:lim_orthogonal} states that eventually, $\opS x^k\in D_{z,\delta}$ for sufficiently large $k$.
Since $\opS x^k \in \range (\theta^{-1}(\opI - \opT))$ for all $k$, the shaded region in the \cref{fig:variance} (right) depicting $D_{z,\delta}\cap  \range(\theta^{-1}(\opI - \opT))$ actually shows the region where $\opS x^k$ lies for large $k$.
In the proof of  \cref{thm:tv}, loosely speaking, we establish the upper bound using \[
-\theta\left( \alpha -  \beta \theta \right)  \Mnorm{ \opS x^k -  \theta^{-1} \bv }^2 \leq 0.
\]
Therefore, the variance can be strictly smaller than the upper bound when $\Mnorm{ \opS x^k -  \theta^{-1} \bv }^2$ is large, which can happen when the area of intersection $D_{z,\delta}\cap  \range(\theta^{-1}(\opI - \opT))$ is large near $\theta^{-1} \bv$.
This can be observed in \cref{fig:variance}, which shows that the range set having large intersection with $D_{z,\delta}$ have smaller asymptotic variance.


\section{Infeasibility detection}
\label{sec:infeas_detect}
In this section, we present the infeasibility detection method for \eqref{RC-FPI} using the hypothesis testing.

\begin{theorem}
\label{thm:detection}
Let $\opT\colon\cH\to\cH$ be $\theta$-averaged with respect to $\Mnorm{\cdot}$ with $\theta\in (0,1]$.
    Let $\bv$ be the infimal displacement vector of $\opT$.
    Assume $\ve^0, \ve^1, \ldots$ is sampled IID from a distribution satisfying the uniform expected step-size condition \eqref{condition:P} with $\alpha\in (0,1]$, and assume \eqref{eq:beta-def} holds with some $\beta>0$ such that $\beta<\alpha/\theta$.
    Let $x^0,x^1,x^2,\ldots$ be the iterates of \eqref{RC-FPI}.
    Then under the null hypothesis $\Mnorm{\bv}\leq \delta$ with nonzero $\delta$, for $\epsilon$ that satisfies $\alpha\delta < \epsilon$,
    \[
    \mathbb{P}\left(  \Mnorm{\frac{x^k}{k} } 
\geq \varepsilon \right) \lesssim \frac{\left( \beta - \alpha^2\right)\delta^2}{k(\varepsilon-\alpha\delta)^2}
    \]
    as $k\to\infty$, where $\bv$ is the infimal displacement vector of $\opT$.
\end{theorem}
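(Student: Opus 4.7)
The plan is to chain a reverse triangle inequality with a vector-valued Chebyshev inequality and then invoke the variance bound of Theorem~\ref{thm:tv}(b). The parameter $\delta$ must be read as the null-hypothesis bound $\Mnorm{\bv} \leq \delta$ that the test is designed to reject (as motivated in the surrounding text), and the stated bound is only meaningful when $\varepsilon > \alpha\delta$.

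Let $\mu_k = \expec[x^k/k]$. By the reverse triangle inequality, the event $\{\Mnorm{x^k/k} \geq \varepsilon\}$ is contained in $\{\Mnorm{x^k/k - \mu_k} \geq \varepsilon - \Mnorm{\mu_k}\}$ whenever $\Mnorm{\mu_k} < \varepsilon$. Markov's inequality applied to $\Mnorm{x^k/k - \mu_k}^2$, together with the definition $\mvar(x^k/k) = \expec[\Mnorm{x^k/k - \mu_k}^2]$, then yields
\[
\mathbb{P}\!\left( \Mnorm{x^k/k} \geq \varepsilon \right) \leq \frac{\mvar(x^k/k)}{(\varepsilon - \Mnorm{\mu_k})^2}.
\]
The $L^2$ convergence $x^k/k \to -\alpha\bv$ in the $M$-norm --- which follows from Lemma~\ref{lem:L2} with $z^0 = x^0$ combined with Theorem~\ref{thm:pazy} applied to the iterates of \eqref{FPI_T} --- implies $\mu_k \to -\alpha\bv$, so $\Mnorm{\mu_k} \to \alpha\Mnorm{\bv} \leq \alpha\delta < \varepsilon$. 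In particular, the denominator $(\varepsilon - \Mnorm{\mu_k})^2$ is positive for $k$ large and converges to $(\varepsilon - \alpha\Mnorm{\bv})^2 \geq (\varepsilon - \alpha\delta)^2$.

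Multiplying by $k$ and taking $\limsup$, Theorem~\ref{thm:tv}(b) gives $\limsup_k k\cdot \mvar(x^k/k) \leq (\beta - \alpha^2)\Mnorm{\bv}^2$, and since the denominator has a positive limit, the elementary rule $\limsup(a_k/b_k) \leq \limsup a_k / \lim b_k$ for positive sequences delivers
\[
\limsup_{k\to\infty} k\cdot \mathbb{P}\!\left( \Mnorm{x^k/k} \geq \varepsilon \right) \leq \frac{(\beta - \alpha^2)\Mnorm{\bv}^2}{(\varepsilon - \alpha\Mnorm{\bv})^2} \leq \frac{(\beta - \alpha^2)\delta^2}{(\varepsilon - \alpha\delta)^2},
\]
which is the stated asymptotic bound.

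The main obstacle is conceptual rather than technical: one might be tempted to apply Markov directly to $\expec[\Mnorm{x^k/k + \alpha\bv}^2]$, but this quantity decomposes into the squared bias $\Mnorm{\mu_k + \alpha\bv}^2$ and the variance, and Pazy-type convergence only gives $o(1)$ on the bias without an explicit rate, which would spoil the $O(1/k)$ scaling. Recentering at the empirical mean $\mu_k$ rather than the true limit $-\alpha\bv$ isolates the $O(1/k)$ variance contribution from Theorem~\ref{thm:tv}(b) and pushes the (rateless) convergence of the mean into the denominator, where only convergence to a positive limit is needed.
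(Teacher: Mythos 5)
Your proposal is correct and follows essentially the same route as the paper's proof: recenter at the mean $\mu_k=\expec[x^k/k]$, apply the triangle and Markov inequalities to get $\mathbb{P}(\Mnorm{x^k/k}\geq\varepsilon)\leq(\varepsilon-\Mnorm{\mu_k})^{-2}\mvar(x^k/k)$, and then invoke \cref{thm:tv}(b) for the $O(1/k)$ variance bound --- including the key observation that one must use the variance (part (b)) rather than the mean-squared error (part (a)), since the latter requires $\bv\in\range(\opI-\opT)$. The one place you diverge is in controlling $\Mnorm{\mu_k}$: the paper bounds it quantitatively via \cref{lem:L2} together with a cited sublinear-rate result for \eqref{FPI_T} (an $\omega$-dependent constant that is then sent to zero), whereas you observe that the $L^2$ convergence $x^k/k\to-\alpha\bv$ already gives $\mu_k\to-\alpha\bv$ by Jensen, so the denominator converges to $(\varepsilon-\alpha\Mnorm{\bv})^2>0$ and no rate is needed there. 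Since only a positive limit of the denominator is required for the $\limsup$ argument, your version is a legitimate and slightly cleaner shortcut that avoids the external citation; the final step, passing from $\Mnorm{\bv}$ to $\delta$ under the null hypothesis via the monotonicity of $t\mapsto t^2/(\varepsilon-\alpha t)^2$, matches the paper.
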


Therefore, for any statistical significance level $p\in (0,1)$, the test
\[
\Mnorm{\frac{x^k}{k} } 
\geq \varepsilon 
\]
with 
\[
k\gtrsim \frac{\left( \beta - \alpha^2\right) \delta^2}{p\left(\varepsilon - \alpha \delta \right)^2}
\]
can reject the null hypothesis and conclude that ${\Mnorm{ \bv} >\delta}$, which implies that the problem is inconsistent.

{
We further discuss this iteration requirement.
Let's rewrite the bound as
\[
    k \gtrsim 
    \frac{(\beta-\alpha^2) \delta^2}{p(\varepsilon-\alpha\delta)^2}
    =
    \frac{(\beta-\alpha^2)}{p(\varepsilon/\delta - \alpha)^2}.
\]
$\alpha$ and $\beta$ are parameters satisfying $\beta\ge\alpha^2$, which comes from the distribution of $\mathcal{I}$.
$\varepsilon$ and $\delta$ are chosen as certain small values such as the machine precision of the computation device, as our objective is to determine whether $v$ is nonzero.
When the distribution of $\mathcal{I}$ is fixed, the lower bound to $k$ can be determined by balancing the accuracy of the hypothesis testing and the number of required iteration, regardless of the choice of nonexpansive operator $\opT$ or the optimization problem (RC-FPI) is solving.
}

For the proof of the \cref{thm:detection}, we begin with the simpler case where $\bv \in \range \left( \opI - \opT \right)$.
Let ${\Mnorm{ \bv} \leq \delta}$ be the null hypothesis with $\delta$ satisfying $\alpha \delta < \epsilon$.
By the triangle inequality, Markov inequality, and \cref{thm:tv}, under the null hypothesis,
\begin{align*}
\mathbb{P}\left(  \Mnorm{\frac{x^k}{k} } 
\geq \varepsilon \right) 
&\le
\mathbb{P}\left(  \Mnorm{\frac{x^k}{k} + \alpha \bv} 
\geq 
\varepsilon-\alpha\delta
\right)\\ 
&\leq \frac{1}{(\varepsilon-\alpha\delta)^2}\expec \left[ \Mnorm{\frac{x^k}{k} + \alpha \bv}^2 \right]\\
&\lesssim \frac{\left( \beta - \alpha^2\right)\delta^2}{k(\varepsilon-\alpha\delta)^2}
\end{align*}
as $k\rightarrow\infty$.

When $\bv \notin \range (\opI - \opT)$, we can still obtain the same (asymptotic) statistical significance with the same test and the same iteration count $k\gtrsim \frac{\left( \beta - \alpha^2\right) \delta^2}{p\left(\varepsilon - \alpha \delta \right)^2} $. Below, we present the full proof of this general case.

\begin{proof}
    First by the triangle inequality and Markov inequality,
\begin{align*}
\mathbb{P}\left(  \Mnorm{\frac{x^k}{k} } 
\geq \varepsilon \right) 
&\le
\mathbb{P}\left(  \Mnorm{\frac{x^k}{k} - \expec \left[ \frac{x^k}{k} \right]} 
\geq 
\varepsilon- \Mnorm{\expec \left[ \frac{x^k}{k} \right]}
\right)\\ 
&\leq \left(\varepsilon- \Mnorm{\expec \left[ \frac{x^k}{k} \right]} \right)^{-2} \mvar\left(  \frac{x^k}{k} \right).
\end{align*}

Next, let's bound the term $\Mnorm{\expec \left[ {x^k}/{k} \right]}$. Due to triangle inequality, Jensen's inequality and \cref{lem:L2} with $z^0 = x^0$, we have 
\[
\Mnorm{\expec \left[ \frac{x^k}{k} \right]} \le \Mnorm{\expec \left[ \frac{x^k}{k} \right] - \frac{z^k}{k} }  +\Mnorm{ \frac{z^k}{k} } \le \bigO\left( \frac{1}{\sqrt{k}}\right)+\Mnorm{ \frac{z^k}{k} }.
\]
By \citep[Theorem~3]{Applegate2021}, for any $\omega > 0$, there exist $\omega$-dependent constant $C_\omega$ such that
\[
\Mnorm{ \frac{z^k}{k} } \le \alpha \Mnorm{\bv} + \frac{1}{k}C_\omega + \omega.
\]
Thus,
\[
\Mnorm{\expec \left[ \frac{x^k}{k} \right]} 
\le \alpha \Mnorm{\bv} + \bigO\left( \frac{1}{\sqrt{k}}\right)+ \frac{1}{k}C_\omega + \omega.
\]

Substitute this inequality at $\Mnorm{\expec \left[ {x^k}/{k} \right]} $ and obtain
\begin{align*}
k\mathbb{P}\left(  \Mnorm{\frac{x^k}{k} } 
\geq \varepsilon \right) 
&\leq \left(\varepsilon- \Mnorm{\expec \left[ \frac{x^k}{k} \right]} \right)^{-2} k\mvar\left(  \frac{x^k}{k} \right)\\
&\leq \left(\varepsilon- \alpha \Mnorm{\bv} - \bigO\left( \frac{1}{\sqrt{k}}\right)- \frac{1}{k}C_\omega - \omega \right)^{-2} k\mvar\left(  \frac{x^k}{k} \right),
\end{align*}
when choice of $\omega$ is sufficiently small and that of $k$ is sufficiently large to keep 
\[
\varepsilon- \alpha \Mnorm{\bv} - \bigO\left( \frac{1}{\sqrt{k}}\right)- \frac{1}{k}C_\omega + \omega > 0.
\]

Take a limit supremum by $k\to \infty$. Then by \cref{thm:tv},
\begin{align*}
&\limsup_{k\to \infty} k\mathbb{P}\left(  \Mnorm{\frac{x^k}{k} } \geq \epsilon\right) \\
& \hspace{0.5cm} \leq \limsup_{k\to \infty}\left(\varepsilon- \alpha \Mnorm{\bv} - \bigO\left( \frac{1}{\sqrt{k}}\right)- \frac{1}{k}C_\omega - \omega \right)^{-2} k\mvar\left(  \frac{x^k}{k} \right)\\
& \hspace{0.5cm} \leq  \frac{ (\beta-\alpha^2) \Mnorm{\bv}^2}{\left(\varepsilon- \alpha \Mnorm{\bv} - \omega \right)^2} \\
\end{align*}
holds for all sufficiently small $\omega>0$. Thus, by $\omega \to 0$,
\[
\limsup_{k\to \infty} k\mathbb{P}\left(  \Mnorm{\frac{x^k}{k} } \geq \epsilon\right)
\leq  \frac{ (\beta-\alpha^2) \Mnorm{\bv}^2}{\left(\varepsilon- \alpha \Mnorm{\bv} \right)^2}
\]

Now consider a null hypothesis of ${\Mnorm{ \bv} \leq \delta}$. Under the null hypothesis,
\[
\limsup_{k\to \infty} k\mathbb{P}\left(  \Mnorm{\frac{x^k}{k} } \geq \epsilon\right)
\leq  \frac{ (\beta-\alpha^2) \delta^2}{\left(\varepsilon- \alpha \delta \right)^2},
\]
or in other word,
\[
\mathbb{P}\left(  \Mnorm{\frac{x^k}{k} } \geq \epsilon\right)
\lesssim  \frac{ (\beta-\alpha^2) \delta^2}{k\left(\varepsilon- \alpha \delta \right)^2},
\]
as $k\rightarrow\infty$.






\end{proof}

\section{Extension to non-orthogonal basis and applications to decentralized optimization} 
\label{sec:decent_opt}


Operator splitting methods such as ADMM/DRS \citep{Glowinski1975admm,gabay1976dual,lions1979splitting} or PDHG \citep{chambolle2011first} are fixed-point iterations with operators that are non-expansive with respect to $M$-norms where $M\neq \opI$, and in such cases, the coordinates form a non-orthogonal basis. Our analyses of Sections~\ref{sec:linearrate} and \ref{sec:variance} were mostly general, accommodating any $M$-norm, with the sole exception of \cref{lem:expec_norm}, which only applies to the case where $M=\opI$. In this section, we use the notion of the Friedrichs angle to extend our analysis to general $M$-norms. We then apply our framework to decentralized optimization and present a numerical experiment.

\subsection{Convergence condition in non-orthogonal basis}
Let's modify the underlying space $\cH$ with extra $\cH_0$ block, making $\cH$ as 
\[
\cH=\cH_0 \oplus \cH_1 \oplus \cH_2 \oplus \dots \cH_m,
\]
where each $\cH_i$ is a Hilbert space.
Consider two subspaces $U_1$ and $U_2$ of $\cH$ as
\[
U_1 = \left\{(x_0,0,0,\dots,0) \,\middle|\, x_0 \in \cH_0 \right\},
\quad
U_2 = \left\{(0,x_1,x_2,\dots,x_m) \,\middle|\, x_i \in \cH_i,\, 1\leq i\leq m\right\},
\]
so that $U_1 \cap U_2 = \{0\}$. We further assume that with $M$-inner product of $\cH$, block components of $U_2$ are orthogonal to each other :
\[
\langle (0,0,\dots , x_i, \dots, 0), (0,0,\dots , x_j, \dots, 0) \rangle_M=0,
\quad x_i \in \cH_i,\, x_j \in \cH_j,
\quad 1\leq i <j\leq m.
\]

Note that every vector in $\cH$ can be uniquely expressed as a linear combination of vectors in $U_1$ and $ U_2$.
Given $\opT\colon\cH\to\cH$ and $\opS=\theta^{-1}{(\opI-\opT)}$,
define $\opG$ and $\opH$ as 
\[
\opS x = \opG x + \opH x, \quad \opG x \in U_1, \quad \opH x \in U_2
\]
for all $x\in \cH$. 
We decompose $U_2$ into $m$ block coordinates, which is also the set of orthogonal subspaces.
(To clarify, the $m$ blocks of $U_2$ are orthogonal with respect to the $M$-norm.)
With a selection vector $\ve \in \left[ 0,1 \right]^m$, define a randomized coordinate operator as 
\begin{equation} \label{eq:nonorth_RCFPI}
\begin{split}
    \opS_\ve
    &=
    \alpha \opG + \sum_{i=1}^m \ve_i \opH_i \\
    \opT_\ve
    &=
    \opI - \theta\opS_\ve,
\end{split}
\end{equation}
where $\opH_i$ is defined similarly to how $\opS_i$ was defined in \cref{sec:rcupdate}.


The cosine of the Friedrichs angle $c_F$ between $U_1$ and $U_2$ \citep{Friedrichs1937OnCI} is defined as a smallest value among $c\leq 1$ such that satisfies
\[
    \left |\left<u_1, u_2 \right>_M\right| \leq c\Mnorm{u_1}\Mnorm{u_2}
    \quad\forall\,
    u_1 \in U_1,\, u_2 \in U_2.
\]
The RC-FPI by \eqref{eq:nonorth_RCFPI} converges, almost surely and in $L^2$, if the cosine of the Friedrichs angle is sufficiently small.


\begin{theorem} \label{thm:RCandFPI}
Let $\opT\colon\cH\to\cH$ be $\theta$-averaged with respect to $\Mnorm{\cdot}$ with $\theta\in (0,1]$. {Assume that block components of $U_2$ are orthogonal to each other.}
    Let $\bv$ be the infimal displacement vector of $\opT$.
 Assume $\ve^0, \ve^1, \ldots$ is sampled IID from a distribution satisfying the uniform expected step-size condition \eqref{condition:P} with $\alpha\in (0,1]$.
    Let $x^0,x^1,x^2,\ldots$ be the iterates of \eqref{RC-FPI}
    $x^{k+1} = \opT_{\ve^k} x^k$,
    where $\opT_{\ve^k}$ is as defined in \eqref{eq:nonorth_RCFPI}.
    Let $c_F$ be the cosine of the Friedrichs angle between $U_1, U_2$.
    \begin{itemize}
        \item[(a)]
        If $    c_F \leq \sqrt{\frac{1-\theta}{1-\alpha \theta}}$, then ${ x^k} /{k} \stackrel{L^2}{\rightarrow} -\alpha \bv$ as $k\to\infty$.
    \item[(b)]If $c_F < \sqrt{\frac{1-\theta}{1-\alpha \theta}}$, then ${ x^k} /{k} \stackrel{a.s.}{\rightarrow} -\alpha \bv$ as $k\to\infty$. (${ x^k}/ {k}$ converges strongly to $-\alpha \bv$ in probability $1$.)
    Furthermore, {the bias and variance results} of  \cref{thm:tv} hold.
    \end{itemize}
\end{theorem}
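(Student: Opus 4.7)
The plan is to reduce \cref{thm:RCandFPI} to \cref{thm:L2}, \cref{thm:as}, and \cref{thm:tv}, all of which were already proved in the full generality of an $M$-norm. Inspection of those arguments shows that the random operator $\opS_\ve$ enters only through (i) the unbiased identity $\expec_\ve[\opS_\ve x]=\alpha\opS x$, which is immediate from \eqref{eq:nonorth_RCFPI}, and (ii) a moment bound of the form $\expec_\ve[\Mnorm{\opS_\ve x}^2]\le\beta\Mnorm{\opS x}^2$ together with its pairwise-difference version used in \cref{lem:reduce_expec}. Hence the task reduces to computing an admissible $\beta$ as a function of $c_F$ and verifying that the stated threshold on $c_F$ forces $\beta\le\alpha/\theta$ for part (a) and $\beta<\alpha/\theta$ for part (b).

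The first step is a non-orthogonal analog of \cref{lem:expec_norm}. For any $u_G\in U_1$ and any mutually $M$-orthogonal $w_1,\ldots,w_m\in U_2$, expanding the square, using independence of the $\ve_i$'s with $\expec[\ve_i]=\alpha$ and $\expec[\ve_i^2]\le\alpha$ (since $\ve_i\in[0,1]$), and exploiting the $M$-orthogonality of the $w_i$'s, one computes
\[
\expec_\ve\!\left[\Mnorm{\alpha u_G + \sum_{i=1}^m \ve_i w_i}^2\right]
\le \alpha^2\Mnorm{u_G}^2 + 2\alpha^2\langle u_G, w\rangle_M + \alpha\Mnorm{w}^2,
\]
where $w=\sum_i w_i$. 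Parametrizing by $s=\Mnorm{u_G}$, $t=\Mnorm{w}$, and $\rho=\langle u_G,w\rangle_M/(st)\in[-c_F,c_F]$, the desired inequality with $\Mnorm{u_G+w}^2$ on the right reduces to
\[
(\beta-\alpha^2)s^2 + 2(\beta-\alpha^2)\rho st + (\beta-\alpha)t^2 \ge 0.
\]
The worst case is $\rho=-c_F$; requiring a non-positive discriminant of the resulting quadratic in $s$ yields the sharp value $\beta=\alpha(1-\alpha c_F^2)/(1-c_F^2)$. Substituting into $\beta\le\alpha/\theta$ and simplifying gives $c_F^2(1-\alpha\theta)\le 1-\theta$, matching the theorem's threshold, with strict inequality corresponding to strict inequality. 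Plugging $u_G=\opG x$, $w_i=\opH_i x$ delivers the scalar moment bound at $x$, and plugging $u_G=\opG x-\opG y$, $w_i=\opH_i x-\opH_i y$ delivers the pairwise version needed in \cref{lem:reduce_expec}. Once these are in hand, \cref{lem:L2_bound}, \cref{lem:L2}, and the $L^2$ half of \cref{thm:L2} go through verbatim in the $M$-norm, giving part (a); in the strict regime the Robbins--Siegmund argument of \cref{lem:as} and the variance estimate of \cref{thm:tv} likewise go through, giving part (b).

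The main obstacle is the sharp derivation of $\beta$ and its exact match with the stated threshold on $c_F$. The subtlety is that the $\ve_i$'s randomize only the $U_2$-blocks while the $U_1$-component carries a deterministic factor $\alpha$, so the cross term $\langle u_G,w\rangle_M$ cannot be made small in absolute value and only its sign is adversarial. The Friedrichs angle supplies precisely the right bound on that cross term, and the worst-case optimization over $\rho$ is what produces $\sqrt{(1-\theta)/(1-\alpha\theta)}$ as the break-even point between the non-orthogonal coupling and the averaging parameter $\theta$.
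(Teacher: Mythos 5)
Your proposal is correct and follows essentially the same route as the paper: the paper's Lemma \ref{lem:f.1} likewise reduces everything to computing an admissible $\beta$ for the non-orthogonal $u_\ve$, obtaining $\beta=\alpha^2+\frac{\alpha-\alpha^2}{1-c_F^2}$, which is algebraically identical to your $\alpha(1-\alpha c_F^2)/(1-c_F^2)$, and then invokes the general $M$-norm lemmas (\cref{lem:L2}, \cref{lem:as}, \cref{thm:tv}). The only cosmetic difference is that you extract $\beta$ via a discriminant/worst-case-$\rho$ argument while the paper bounds $\Mnorm{h}^2\le\Mnorm{u}^2/(1-c_F^2)$ directly; also note that independence of the $\ve_i$'s is neither assumed nor needed, since the $M$-orthogonality of the $U_2$-blocks already kills the cross terms.
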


\subsection{Proof of \cref{thm:RCandFPI}} \label{appendix:mnorm}

\begin{proposition} \label{lem:f.1}
    Suppose the subspaces $U_1, U_2$ of $\cH$ with
    $ U_1 \cap U_2 = \{ 0 \}$
    satisfy the condition
    \[
    \left |\left<u_1, u_2 \right>_M\right| \leq c_F\Mnorm{u_1}\Mnorm{u_2}, \quad c_F\leq \sqrt{\frac{1-\theta}{1-\alpha \theta}}
    \]
    for any $u_1 \in U_1, u_2 \in U_2$. {Assume that block components of $U_2$ are orthogonal to each other.}
    Then, there exists $\beta\ge0$ such that $\beta\theta \leq \alpha$ and
    \[
    \expec_{\ve  } \left[  u_\ve \right] = \alpha u, 
    \quad \expec_{\ve  } \left[  \Mnorm{u_\ve}^2 \right] \leq \beta \Mnorm{u}^2
    \]
    where $u_\ve$ and $u$ are defined as
    \[
    u_\ve = \alpha g + \sum_{i=1}^m \ve_i (0,\dots , 0, h_i , 0, \dots , 0),
    \quad u = g + h, \quad g \in U_1, \quad h \in U_2. 
    \]
    where $h_i\in \cH_i$ for $i=1,\dots,m$.
\end{proposition}
\begin{proof}
    Let $\tilde{h_i} \in U_2$ as $\tilde{h_i}=(0,\dots , 0, h_i , 0, \dots , 0)$ for $i=1,\dots , m$. First equation comes from, \[
    \expec_{\ve  } \left[  u_\ve \right] = \alpha g + \expec_{\ve  } \left[ \sum_{i=1}^m \ve_i \tilde{h_i} \right] = \alpha g +  \alpha h = \alpha u.
    \]

    The expectation in the second equation is
\begin{equation*}
    \begin{split}
        \expec_{\ve  } \left[  \Mnorm{u_\ve}^2 \right] 
        &= \alpha^2 \Mnorm{g}^2 +2\expec \left[ \left< \sum_{i=1}^m \ve_i \tilde{h_i}, \alpha g\right>_M\right] +\expec \left[ \Mnorm{\sum_{i=1}^m \ve_i \tilde{h_i} }^2\right] \\
        &=\alpha^2 \Mnorm{g}^2
        +2 \left< \alpha g, \alpha h\right>_M
        +\sum_{i=1}^m  \expec \left[ \ve_i^2\right]  \Mnorm{ \tilde{h_i}}^2
         \\
         &\leq\alpha^2 \Mnorm{g}^2
        +2 \left< \alpha g, \alpha h\right>_M
        +\sum_{i=1}^m  \expec \left[ \ve_i\right]  \Mnorm{ \tilde{h_i}}^2
         \\
        &=\alpha^2 \Mnorm{g}^2
        +2 \alpha^2 \left<  g,  h\right>_M
        + \alpha \Mnorm{h}^2 \\
        &= \alpha^2 \Mnorm{u}^2 + (\alpha - \alpha^2) \Mnorm{h}^2.
    \end{split}
\end{equation*}
{In the second equality, the Pythagorean theorem is applied on $\sum_{i=1}^m \ve_i \tilde{h_i}$, since the block components of $U_2$ are orthogonal to each other.} Note that 
\[
\Mnorm{u}^2
= \Mnorm{h}^2 +2\left< h, g\right>_M+ \Mnorm{g}^2
\geq \Mnorm{h}^2 -2c_F\Mnorm{h}\Mnorm{g}+ \Mnorm{g}^2
\geq \left( 1-c_F^2\right) \Mnorm{h}^2.
\]
Thus, set $\beta$ as
\[
\beta = \alpha^2 + \frac{\alpha - \alpha^2}{1-c_F^2}.
\]
Then,
\[
\expec_{\ve  } \left[  \Mnorm{u_\ve}^2 \right] \leq  \alpha^2 \Mnorm{u}^2 + (\alpha - \alpha^2) \Mnorm{h}^2
\leq \left( \alpha^2 + \frac{\alpha - \alpha^2}{1-c_F^2}\right) \Mnorm{u}^2 =\beta  \Mnorm{u}^2,
\]
with
\[
\theta \beta \leq \theta \left( \alpha + \frac{1-\alpha\theta}{ \theta}\right) \alpha  = \alpha.
\]
\end{proof}
Additionally, if $c_F< \sqrt{\frac{1-\theta}{1-\alpha \theta}} $ in \cref{lem:f.1}, we have $\theta\beta < \alpha$.

\begin{proof} [Proof of \cref{thm:RCandFPI}]
With \cref{lem:f.1}, we know that $\beta$ is dependent on the value of the cosine of Friedrichs angle $c_F$ as :
\[
\expec_{\ve  } \left[  u_\ve \right] = \alpha u, 
\quad \expec_{\ve  } \left[  \Mnorm{u_\ve}^2 \right] 
\leq \beta \Mnorm{u}^2,
\quad \beta = \alpha^2 + \frac{\alpha - \alpha^2}{1-c_F^2}.
\]
Hence, when $c_F \leq \sqrt{\frac{1-\theta}{1-\alpha \theta}}$, we have $\beta \leq \alpha / \theta$, and when $c_F < \sqrt{\frac{1-\theta}{1-\alpha \theta}}$, we have $\beta < \alpha / \theta$.

\textbf{Proof of statement (a).} Since $c_F \leq \sqrt{\frac{1-\theta}{1-\alpha \theta}}$, we have $\beta \leq \alpha / \theta$. Therefore, we may use the result of  \cref{lem:L2} with $z^0 = x^0$.
\[
\expec\left[ \Mnorm{\frac{ x^k} {k} - \frac{ z^k} {k} }^2 \right]
\leq 
\frac{1} {k} \left( 
2\sqrt{\alpha \theta}  \left( 1-\alpha \theta \right) \Mnorm{ \opS x^{0}  } \Mnorm{\opS z^0}
- \frac{\alpha}{\theta} \left(1 - \alpha \theta \right)\Mnorm{\bv}^2
\right) .
    \]

When the limit $k\to \infty$ is taken, 
    \[
\lim_{k\to \infty} \expec\left[ \Mnorm{\frac{ x^k} {k} - \frac{ z^k} {k} }^2 \right]=0, \quad \lim_{k\to\infty} \Mnorm{\frac{ z^k} {k} +\alpha \bv}=0,
    \]
    where the second equation is from \cref{thm:pazy}. These two limits provide $L^2$ convergence of normalized iterate, namely
    \[
 \frac{ x^k} {k}  \stackrel{L^2}{\rightarrow} -\alpha \bv,
    \]
    as $k\to \infty$.

\textbf{Proof of statement (b).}  Since $c_F < \sqrt{\frac{1-\theta}{1-\alpha \theta}}$, we have $\beta < \alpha / \theta$. Thus, from \cref{lem:as}, we can conclude the strong convergence in probability $1$,
    \[
    \frac{ x^k} {k} \stackrel{a.s.}{\rightarrow} -\alpha \bv
    \]
    as $k\to\infty$.
    Furthermore, since $\beta < \alpha / \theta$, we now satisfy every conditions of \cref{thm:tv}. Thus, identical results of \cref{thm:tv} are obtained in this case.
\end{proof}

\subsection{Application of \cref{thm:RCandFPI} in {PG-EXTRA}} \label{subsec:PGEXTRA}
Consider the convex optimization problem
\begin{equation}
\label{eq:problem}
\begin{array}{ll}
\underset{x\in \mathbb{R}^d}{\mbox{minimize}}
&
\displaystyle{
\sum_{i=1}^m f_i(x)},
\end{array}
\end{equation}
where $f_i\colon\RR^d \to \RR$ is closed, convex, and proper function for $i=1,\dots,m$.
Consider the decentralized algorithm {PG-EXTRA} \citep{shi2015proximal}
\begin{equation*}
    \label{iter:PG-EXTRA} \tag{PG-EXTRA}
    \begin{split}
        x_i^{k+1} 
    &= \prox_{\tau  f_i}\left(
        {\scriptstyle \sum_{j=1}^m} W_{ij} x_i^{k} - w_i^{k}
    \right) \\
    w_i^{k+1} &= w_i^{k}
    + \frac{1}{2} \left( x_i^k - {\scriptstyle \sum_{j=1}^m} W_{ij} x_j^k \right)
    \end{split}
\end{equation*}
for $i=1,2,\dots , m$. In decentralized optimization, we use network of agents to compute the algorithm. If a pair of agents could communicate, we say that they are connected. 
For each agents $i=1,2\dots ,m$, $N_i$ is a set of agents connected to agent $i$.
A matrix $W$ is called a mixing matrix, and it is a symmetric $m$ by $m$ matrix with $W_{ij} = 0$ if $i\neq j$ and $j \notin N_i$.
{Furthermore, $1\geq |\lambda_i|$ for each eigenvalue of $W$ and $\ker(I-W) = \text{span}(\mathbf{1})$, which ensures consensus among agents at the solution. }

A randomized coordinate-update version of {PG-EXTRA} randomly chooses $i$ among $1, 2, \dots ,m$ to update $x_i^k$, while every $w_1, w_2, \dots , w_m$ gets updated at each iterations.

\begin{algorithm}[htbp]
   \caption{RC-PG-EXTRA}
   \label{alg:PG-EXTRA}
\begin{algorithmic}
   \For{$i\in \{1,2,\dots, m\}$}
   \State {\bfseries Initialize:} $w_i=0$, $x_i=0$, $[Wx]_i=0$
   \EndFor

   \For{$j\in \{1,2,\dots, m\}$}
   \State {\bfseries Update:} $w_j =  w_j + \frac{\alpha}{2} \left(x_i- [Wx]_i \right)$
   \EndFor
   
   \While{Not converged}
   \State {\bfseries Sample:} $\ve $
   \For{$i$ such that $\ve_i \neq 0$}
   
   \State $\Delta x_i = \prox_{\tau  f_i} \left([Wx]_i - w_i \right) -x_i$
   \State {\bfseries Update:} $x_i = x_i + \ve_i \Delta x_i$

   \For{$j\in N_i \cup \{i\}$}
   \State {\bfseries Send:} {$\Delta x_i$ From $i$th agent to $j$th agent.}
   \State $[Wx]_j = [Wx]_j + W_{ij} \Delta x_i$
   \State $w_j=  w_j + \frac{\alpha}{2} \left(x_i- [Wx]_i \right)$
   \EndFor

   \EndFor
   
   \EndWhile
\end{algorithmic}
\end{algorithm}

Note that $\Delta x_i$ is the only quantity communicated across agents.
$|N_i|$ communications happen each iteration, while values $x_i, w_i, [Wx]_i$ are stored in $i$th agent.

\eqref{iter:PG-EXTRA} is a fixed-point iteration with an averaged operator with respect to $M$-norm where $M\neq \opI$.
Under the conditions of \cref{thm:PG-EXTRA}, the condition regarding the Friedrichs angle of \cref{thm:RCandFPI} holds and \cref{alg:PG-EXTRA} converges.



\begin{corollary} \label{thm:PG-EXTRA}
Suppose $\ve^0, \ve^1, \ldots$ is sampled IID from a distribution satisfying the uniform expected step-size condition \eqref{condition:P} with $\alpha\in (0,1]$. 
Consider \cref{alg:PG-EXTRA} with $\ve = \ve^0, \ve^1, \ldots$.
If the minimum eigenvalue of the symmetric mixing matrix $W\in\mathbb{R}^m$ satisfies
    \[
    \lambda_\text{min}(W)> -\frac{\alpha}{2-\alpha},
    \]
the normalized iterate of \cref{alg:PG-EXTRA} converges to $-\alpha \bv$, where $\bv$ is the infimal displacement vector of \eqref{iter:PG-EXTRA}, both in $L^2$ and almost surely.
\end{corollary}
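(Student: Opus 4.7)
The plan is to realize Algorithm~\ref{alg:PG-EXTRA} as a randomized coordinate-update fixed-point iteration of the form \eqref{eq:nonorth_RCFPI} and then invoke \cref{thm:RCandFPI}. First, I would write \eqref{iter:PG-EXTRA} as a fixed-point iteration $z^{k+1} = \opT z^{k}$ on the joint variable $z = (w, x_1, \ldots, x_m)$. Following the standard analysis of \citet{shi2015proximal}, the operator $\opT$ is $\theta$-averaged (with $\theta = 1/2$) with respect to an $M$-norm whose $x$-block equals the identity, so that taking $\cH_0$ to be the $w$-space and $\cH_i$ ($i \ge 1$) to be the $i$th agent's $x_i$-space makes the $\cH_i$ blocks automatically $M$-orthogonal, as the non-orthogonal-basis framework of \cref{sec:decent_opt} requires.

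Second, I would verify that \cref{alg:PG-EXTRA} implements exactly the randomized operator \eqref{eq:nonorth_RCFPI} for this $\opT$. The deterministic factor $\alpha/2$ in the $w_j$-update (performed for every $j$) matches the coefficient $\alpha$ on $\opG$, the $\cH_0$-component of $\opS$, while the random factor $\ve_i$ on $\Delta x_i$ matches $\sum_i \ve_i \opH_i$. Maintaining $[Wx]_j$ incrementally on the fly makes the per-iteration state consistent with $\opT_{\ve} z^{k}$ without altering the mathematical update rule, and sampling $\ve^0, \ve^1, \dots$ IID under the uniform expected step-size condition \eqref{condition:P} satisfies the remaining assumption.

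Third, and this is the main step, I would compute the cosine of the Friedrichs angle $c_F$ between $U_1 = \cH_0$ and $U_2 = \cH_1 \oplus \cdots \oplus \cH_m$ under the $M$-norm and translate $c_F < \sqrt{(1-\theta)/(1-\alpha\theta)}$ into a condition on the spectrum of $W$. Because the cross coupling of $w$ and $x$ in the $M$-norm involves only the matrix $W$, diagonalizing $W$ reduces the supremum defining $c_F$ to a one-dimensional maximization whose optimizer is attained along the eigenvector of $W$ realizing $\lambda_{\min}(W)$. A direct calculation should yield $c_F^2$ as an explicit function of $\lambda_{\min}(W)$; with $\theta = 1/2$, the target inequality $c_F^2 < 1/(2-\alpha)$ then unpacks to $\lambda_{\min}(W) > -\alpha/(2-\alpha)$.

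Once the Friedrichs angle condition is verified, \cref{thm:RCandFPI}(a) and (b) directly deliver the $L^2$ and almost sure convergence of $x^k/k$ to $-\alpha \bv$. The principal obstacle is carrying out the spectral computation that ties $c_F$ to $\lambda_{\min}(W)$ under the correct primal--dual $M$-norm; everything else is setup and bookkeeping.
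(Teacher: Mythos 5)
Your proposal follows essentially the same route as the paper: identify \eqref{iter:PG-EXTRA} as a $\tfrac12$-averaged fixed-point iteration in the Condat--Vu $M$-norm, match \cref{alg:PG-EXTRA} to the operator \eqref{eq:nonorth_RCFPI} with $\cH_0$ the $w$-block and $\cH_1,\dots,\cH_m$ the $M$-orthogonal $x_i$-blocks, and reduce the Friedrichs-angle condition to a spectral bound on $W$ (the paper does this via $c_F=\lambda_{\max}(U)=\sqrt{(1-\lambda_{\min}(W))/2}$ with $U^2=\tfrac12(I-W)$, which is exactly the one-dimensional maximization you describe). The calculation you defer does unpack to $\lambda_{\min}(W)>-\alpha/(2-\alpha)$, so the argument is correct and matches the paper's proof.
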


\begin{proof}
    
In the proofs, we use a stack notation for convenience. With stack notation, $\vx\in\RR^{m\times d}$ refers
\[
\vx = 
\begin{bmatrix}
    \text{---} & x_1^\intercal & \text{---} \\
    \text{---} & x_2^\intercal & \text{---} \\
    & \vdots & \\
    \text{---} & x_m^\intercal & \text{---}
\end{bmatrix}, \quad
\left[ W \vx \right]_i = \sum_{j=1}^m W_{ij}x_j .
\]

\eqref{iter:PG-EXTRA} originates from {Condat-V\~u} \citep{condat2013primal,vu2013splitting} with $\vw^k = \tau U \vu^k$, where {Condat-V\~u} is a method defined as 
\begin{equation*}
\begin{split}
    &\vx^{k+1} = \prox_{\tau  f} (W\vx^k - \tau  U \vu^k ) \\
    &\vu^{k+1} = \vu^k + \frac{1}{\tau } U \vx^k,
\end{split}
\end{equation*}
which is a fixed-point iteration with an operator that's {$({1}/{2})$-averaged} in $M$-norm. Thus, $\theta$ value in \eqref{iter:PG-EXTRA} is $\theta = {1}/{2}$.

The matrix $M$ in \eqref{iter:PG-EXTRA} is
\begin{equation*}
    \begin{split}
        M
        = 
        \begin{bmatrix}
        \frac{1}{\tau } I & U \\
        U & \tau  I
        \end{bmatrix}, 
    \end{split}
\end{equation*}
where $U$ is a positive semidefinite matrix such that $U^2 = \frac{1}{2}\left( I-W\right)$. Note that the inner product in this case is
\[
\left< 
\begin{bmatrix}
        \vx \\
        \vu
        \end{bmatrix},
\begin{bmatrix}
        \vy \\
        \vv
        \end{bmatrix}\right>_M
        = \tr \left( 
        \begin{bmatrix}
        \vx \\
        \vu
        \end{bmatrix}^T 
        M \begin{bmatrix}
        \vy \\
        \vv
        \end{bmatrix} \right).
\]
Due to the given inner product, two subspaces $V_1 = \left( \RR^m \times \{0\}^m\right)^d$ and $V_2 = \left(  \{0\}^m\times \RR^m  \right)^d$ are no longer orthogonal to each other. On the other hand, $m$ subspaces of $V_1$, 
\[
\left(  \{0\}^{i-1} \times \RR \times \{0\}^{m-i} \times \{0\}^m\right)^d, \quad i=1,2,\dots , m,
\]
are orthogonal to each other. Inner product between $V_1$ and $V_2$ is constrained as
\[
\left|
\left< 
\begin{bmatrix}
        \vx \\
        \mathbf{0}
        \end{bmatrix},
\begin{bmatrix}
        \mathbf{0} \\
        \vu
        \end{bmatrix}\right>_M \right|
        = \left| \vx^T U \vu \right| \leq \lambda_{\max}^U \norm{\begin{bmatrix}
        \vx \\
        \mathbf{0}
        \end{bmatrix}} \norm{\begin{bmatrix}
        \mathbf{0} \\
        \vu
        \end{bmatrix}}
        = \lambda_{\max}^U \Mnorm{\begin{bmatrix}
        \vx \\
        \mathbf{0}
        \end{bmatrix}} \Mnorm{\begin{bmatrix}
        \mathbf{0} \\
        \vu
        \end{bmatrix}}.
\]
Since $\lambda_{\min}^W> -{\alpha}/({2-\alpha})$, 
\[
\lambda_{\max}^U = \sqrt{\frac{1-\lambda_{\min}^W}{2}} < \sqrt{\frac{1}{2-\alpha}} = \sqrt{\frac{1-\frac{1}{2}}{1-\frac{\alpha}{2}}},
\]
and we may apply \cref{thm:RCandFPI} with
\[
\vu \in V_2 = U_1, \quad \vx \in V_1 = U_2, \quad \cH_0 = \RR^{m \times d}, \quad \cH_1=\cH_2=\dots = \cH_m = \RR^d
\]
and block coordinate update with each orthogonal blocks as 
\[
\left(  \{0\}^{i-1} \times \RR \times \{0\}^{m-i} \times \{0\}^m\right)^d, \quad i=1,2,\dots , m,
\]
to conclude \cref{thm:PG-EXTRA}.

\end{proof}

Additionally, here is the infimal displacement vector of \eqref{iter:PG-EXTRA}.
\begin{lemma} \label{lem:v_pgextra}
    The infimal displacement vector $\bv=(\bv_1,\dots,\bv_m)$ of \eqref{iter:PG-EXTRA} is
    \[
\bv_i = 
\begin{bmatrix}
    \frac{\tau }{m} \sum_{j=1}^m g_j \\
    -\frac{1}{2} \left( y_i - \sum_{j=1}^m W_{ij}y_j \right)
\end{bmatrix}
\]
for $i=1,\dots,m$, where $\left( y_1, y_2, \dots , y_m\right)$ and $\left( g_1, g_2, \dots , g_m\right)$ are
\[
 \underset{
 \begin{subarray}{c}
 y_1, y_2, \dots y_m \in \RR^{d} \\
  g_j \in \partial f_j(y_j), 1\leq j \leq m
  \end{subarray}
  }{\argmin} ~    \frac{\tau^2 }{m}\norm{ \sum_{j=1}^m g_j}^2 + \frac{1}{4} \sum_{i,j=1}^m W_{ij} \norm{y_i - y_j}^2.
\]
\end{lemma}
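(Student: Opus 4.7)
The plan is to identify $\bv$ as the $M$-norm projection of $0$ onto $\overline{\range(\opI-\opT)}$, where $\opT$ is the \eqref{iter:PG-EXTRA} operator viewed in Condat-Vu coordinates $(\vx,\vu)$ via the substitution $\vw=\tau U\vu$ and $M$ is the matrix from the proof of \cref{thm:PG-EXTRA}. The projection will be reduced to a convex program in $\vy$, whose optimality conditions recover $(y_1,\ldots,y_m)$. Concretely, setting $\vy=\prox_{\tau f}(W\vx-\tau U\vu)$ and $q=(W\vx-\tau U\vu-\vy)/\tau\in\partial f(\vy)$, one has
\[
(\opI-\opT)(\vx,\vu)=\bigl(\vx-\vy,\,-\tfrac{1}{\tau}U\vx\bigr),
\]
subject to the admissibility constraint $W\vx-\vy-\tau q\in\range(U)$; since $W$ is doubly stochastic and $\range(U)=\{\vz:\mathbf{1}^T\vz=0\}$, this becomes the scalar relation $\mathbf{1}^T(\vx-\vy)=\tau\,\mathbf{1}^T q$.

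\textbf{Reducing the $M$-norm and minimizing over $\vx$.} Expanding $\|(\vx-\vy,\,-\tfrac{1}{\tau}U\vx)\|_M^2$, using $I-U^2=\tfrac{1}{2}(I+W)$, and completing the square in $\vx-\vy$ yields a decomposition whose $\vx$-dependence lives in the quadratic form $\tfrac{1}{2}(\vx-\vy)^T(I+W)(\vx-\vy)$ while the $\vy$-dependence sits in $\|U\vy\|^2$. The consensus direction $\mathbf{1}$ is a simultaneous eigenvector of $I+W$ (eigenvalue $2$) and $U$ (eigenvalue $0$), which decouples the two pieces. For fixed $(\vy,q)$, Lagrange multipliers combined with $(I+W)\mathbf{1}=2\mathbf{1}$ force $\vx-\vy$ along $\mathbf{1}$, giving $\vx-\vy=\tfrac{\tau}{m}\mathbf{1}\bigl(\sum_i q_i\bigr)^T$. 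Hence $d_x$ is a consensus vector whose every block equals $\tfrac{\tau}{m}\sum_j q_j$, matching the first coordinate of $\bv_i$ once $q_j=\nabla f_j(y_j)$.

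\textbf{Outer minimization and second component.} Substituting the optimal $\vx$ leaves a residual cost that depends only on $(\vy,q)$ subject to $q_j\in\partial f_j(y_j)$; its minimizer is the tuple $(y_1,\ldots,y_m)$ displayed in the lemma. For the $\vw$-component of $\bv_i$, recover $\vu$ from $\tau U\vu=W\vx-\vy-\tau q$ and set $\vw=\tau U\vu$; combining $U^2=\tfrac{1}{2}(I-W)$ with the fact that $U$ annihilates the consensus part of $\vx-\vy$ yields $\bv_{w,i}=-\tfrac{1}{2}\bigl(y_i-\sum_j W_{ij}y_j\bigr)$, completing the identification.

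\textbf{Main obstacle.} Two subtleties require care. First, the interplay between $\vu$ and $\vw=\tau U\vu$ on the consensus subspace, where $U$ is singular, must be handled so that the proposed $(\vy,q,\vu)$-parametrization sweeps exactly $\overline{\range(\opI-\opT)}$ rather than a strict subset; this is where closure needs to be invoked. Second, and more critically, the reduced $(\vy,q)$-program produced after the inner minimization must be shown to have the same argmin set as the optimization displayed in the lemma. This coefficient-matching step is the crux of the identification and is what forces the particular normalization by $\tfrac{\tau}{m}$ and by the graph Laplacian quadratic $\tfrac{1}{2}\sum_{i,j}W_{ij}\|y_i-y_j\|^2$ rather than any of the other natural quadratic forms in $U\vy$.
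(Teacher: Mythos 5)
Your proposal is correct and shares the paper's overall architecture --- pass to Condat--Vu coordinates $(\vx,\vu)$ with $\vw=\tau U\vu$, use $\Delta\vu=-\frac{1}{\tau}U\vx$ and $U^2=\frac{1}{2}(I-W)$ to expand $\Mnorm{(\opI-\opT)(\vx,\vu)}^2$, and reduce the projection of $0$ onto $\overline{\range(\opI-\opT)}$ to a minimization over $\vy$ --- but the middle step is executed by a genuinely different decomposition. The paper splits $\Delta\vx$ into a consensus part $\mathbf{1}\tilde{x}^T$ plus an orthogonal part $\vx_\perp$, shows by a perturbation argument (replacing $\vx$ by $\vx-\vx_\perp$ and absorbing $W\vx_\perp$ into $\tau U\vu$) that dropping $\vx_\perp$ never increases the $M$-norm, and then extracts $x^T\in\frac{\tau}{m}\mathbf{1}^T\nabla f(\vy)$ by projecting the proximal inclusion onto the $\mathbf{1}$ direction. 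You instead complete the square in $d_x=\vx-\vy$, obtaining the clean separation $\frac{1}{2}d_x^T(I+W)d_x+\norm{U\vy}^2$, and minimize the first term subject to the single admissibility constraint $\mathbf{1}^Td_x=\tau\mathbf{1}^Tq$ that encodes $W\vx-\vy-\tau q\in\range(U)$; the eigenrelation $(I+W)\mathbf{1}=2\mathbf{1}$ then yields $d_x=\frac{\tau}{m}\mathbf{1}\left(\sum_j q_j\right)^T$ directly, and the second block follows from $U$ annihilating consensus vectors, as in the paper. Your route is algebraically tidier and makes the consensus structure of the first block transparent, at the mild cost of needing $I+W\succ0$ and $\range(U)=\{\vz:\mathbf{1}^T\vz=0\}$ (connectivity plus the usual spectral assumptions on the mixing matrix, which the paper also uses implicitly). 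Both obstacles you flag are real: the paper is equally casual about $\range$ versus its closure, and your caution about coefficient matching is warranted --- the derivation produces the reduced objective $\tau^2 m^{-1}\norm{\mathbf{1}^T\nabla f(\vy)}^2+\frac{1}{2}\tr\left(\vy^T(I-W)\vy\right)$, whose relative weighting of the two terms does not transparently agree with that of the $\argmin$ displayed in the lemma, so you should carry your inner-minimization constants through explicitly rather than matching against the displayed normalization.
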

{
The proof of \cref{lem:v_pgextra} is presented in the \ref{appendix:pgextra}.
}

\subsection{Experiment of the infeasible case in PG-EXTRA} \label{sec:exp}

{
In this subsection, we show a problem instance for which \eqref{RC-FPI} is efficient.
}
We performed an experiment on an instance of 
\eqref{eq:problem} using \cref{alg:PG-EXTRA}. 
{We considered a setup where a single abnormal agent makes the whole decentralized optimization problem infeasible.}
\cref{fig:experiment} shows that (RC-PG-EXTRA), \cref{alg:PG-EXTRA}, converges to the  infimal displacement vector faster in terms of communication count.


\begin{figure*}[ht]
\centering
\includegraphics[width=0.45\textwidth]{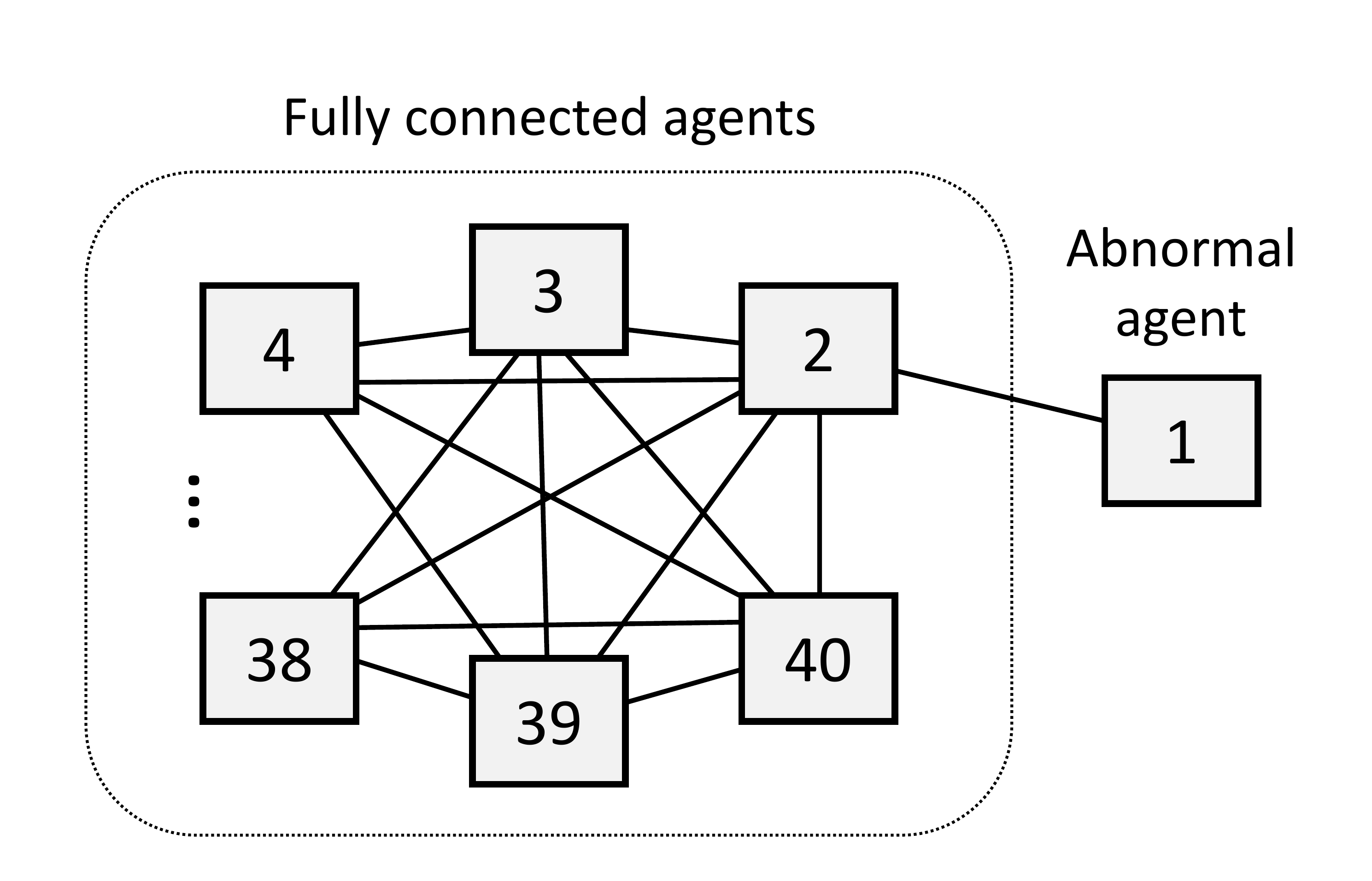}
\hspace{0.05\textwidth}
\includegraphics[width=0.45\textwidth]{./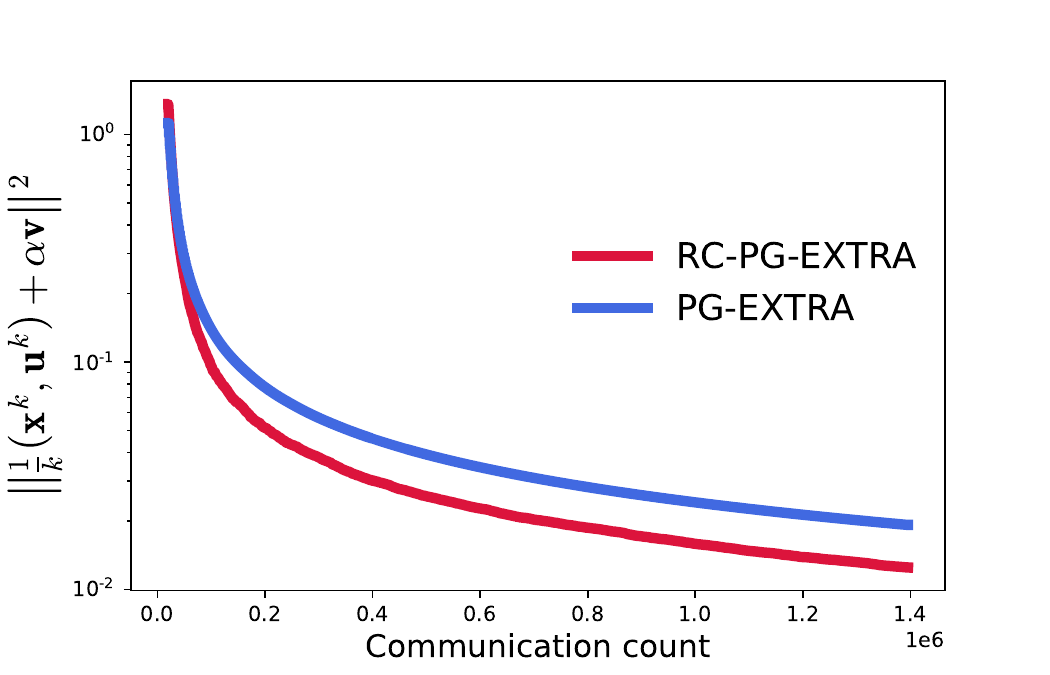}
  \caption{(Left) Network used in our experiment, consisting of $m=40$ agents, with agents $2,\dots,40$ densely connected.
  (Right) Graph of $\norm{\left( \mathbf{x}^k,  \mathbf{u}^k \right) /k + \alpha \bv}^2$ against the communication count for \eqref{iter:PG-EXTRA} and (RC-PG-EXTRA), \cref{alg:PG-EXTRA}.}
  \label{fig:experiment}
\end{figure*}




Specifically, define $f_i \colon \RR^2 \to \RR$ for $i=1,\cdots,m$ as
\[
f_i(x)=
\left\{
\begin{array}{ll}
0&\text{if }x \in C_i\\
\infty&\text{otherwise.}
\end{array}
\right.
\]
with $C_1 = \left\{ (x,y) \mid x \leq -10 \right\}$ and  $C_2 = C_3 = \dots=C_m  = \left\{ (x,y) \mid x > 0, \, xy \leq -1 \right\}$.
The network is depicted in \cref{fig:experiment}.
We use Metropolis constant edge weight matrix  \citep{boyd2004fastest,xiao2007distributed} for our mixing matrix $W$. Metropolis mixing matrix is a symmetric matrix of the form
\begin{equation*}
    W_{ij} = \begin{cases}
  \frac{1}{\max \left( |N_i|, |N_j| \right) + \epsilon }  & \text{if $j\in N_i$} \\
  1-\sum_{l\in N_i} W_{il} &  \text{if $j = i$} \\
  0 & \text{otherwise}
\end{cases}
\end{equation*}
with $\epsilon>0$.
We choose $\epsilon=0.05$ in our experiment.

In this setting, the infimal displacement vector has the analytical form:
\begin{equation*}
\bv_i = \frac{b_i}{2(m-1+\epsilon)} \begin{bmatrix} \mathbf{0} \\u_1 - u_2 \end{bmatrix}, \quad 
b_i = \begin{cases}
  1& \text{if $i=1$} \\
  -1 &  \text{if $i = 2$} \\
  0 & \text{if $i>2$,}
\end{cases}
\end{equation*}
where $u_1, u_2\in \RR^d$ is a vector defined as {
\[
(u_1, u_2) = \argmin_{u_1\in\overline{C_1}, u_2\in\overline{C_2}} \|u_1 - u_2\|.
\]
}{
The detailed computation of infimal displacement vector is presented in \ref{appendix:infdistvec}.
}

The distribution of $\ve$ used for the experiment is
\begin{equation*}
P\left( \ve \right) = \begin{cases}
 0.3 & \text{if $\ve = \frac{0.7}{0.3 \times (m-1)} e_1$} \\
  \frac{0.7}{m-1} &  \text{if $\ve = e_i$ for some $i\geq 2$} \\
  0 & \text{otherwise,}
\end{cases}
\end{equation*}
where $e_i\in\mathbb{R}^m$ is the $i$th standard unit vector.



\section{Conclusion} \label{sec:conclusion}
This work analyzes the asymptotic behavior of the \eqref{RC-FPI} and establishes convergence of the normalized iterates to the infimal displacement vector, and this allows us to use the normalized iterates to test for infeasibility. We also extend our analyses to the setup with non-orthogonal basis, thereby making our results applicable to the decentralized optimization algorithm \eqref{iter:PG-EXTRA}.

One possible direction of future work would be to use variance reduction techniques in the style of, say, SVRG \citep{NIPS2013_ac1dd209} or \citep{schmidt2017} to improve the convergence rate. Such techniques allow stochastic-gradient-type methods to exhibit a rate faster than $\mathcal{O}(1/k)$, and may be applicable in to the coordinate-update setup accelerate the infeasibility detection.

{
Another possible direction is to generalize our (RC-FPI) framework to cover the coordinate-update primal-dual methods of Chambolle--Ehrhardt--Richt\'arik--Sch\"onlieb \cite{ChambolleEhrhardtRichtarikSchonlieb2018_stochastic} and Fercoq--Bianchi \cite{FercoqBianchi2019_coordinatedescent}, which show faster convergence speeds for feasible optimization problems. We believe that the practical usability of these methods can be enhanced with additional infeasibility detection mechanisms similar to those presented in this paper.
}


\section*{Acknowledgments}
 We thank Kibeom Myoung for providing careful reviews and valuable feedback. 
 This work is supported by the National Research Foundation of Korea (NRF) grants funded by the Korean government (No.RS-2024-00421203) and (RS-2024-00406127).










\appendix
\section{Omitted proofs of  \cref{sec:linearrate}} \label{appendix:4}
\subsection{Proofs of the lemmas in \cref{subsec:props}} \label{appendix:4.1}
\begin{proof} [Proof of  \cref{lem:reduce_expec}]
    Substitute $\opT_{\ve} = \opI - \theta \opS_{\ve}$ at $\mathop{\expec}_{\ve,X,Y} \left[ \Mnorm{  \opT_{\ve} X - \opT_{\ve} Y }^2\right]$ and apply \eqref{eq:beta-def} with $u$ as $\opS X - \opS Y$ to get
\begin{equation*}
\begin{split}
&\mathop{\expec}_{\ve,X,Y} \left[ \Mnorm{  \opT_{\ve} X - \opT_{\ve} Y }^2\right] = \mathop{\expec}_{\ve,X,Y} \left[ \Mnorm{  X-Y - \theta \left( \opS_{\ve} X - \opS_{\ve} Y\right) }^2\right] \\
& \hspace{0.5cm}  = \mathop{\expec}_{X,Y} \left[ \Mnorm{  X-Y }^2\right] +  \theta^2 \mathop{\expec}_{\ve,X,Y} \left[ \Mnorm{    \opS_{\ve} X - \opS_{\ve} Y }^2\right] 
- 2\theta \mathop{\expec}_{\ve,X,Y} \left[ \left<  X-Y ,  \opS_{\ve} X - \opS_{\ve} Y \right>_M\right] \\
& \hspace{0.5cm}  \leq \mathop{\expec}_{X,Y} \left[ \Mnorm{  X-Y }^2\right] +  \beta \theta^2 \mathop{\expec}_{X,Y} \left[ \Mnorm{   \left( \opS X - \opS Y\right) }^2\right] 
- 2\alpha\theta \mathop{\expec}_{X,Y} \left[ \left<  X-Y ,   \opS X - \opS Y \right>_M\right] .
\end{split}
\end{equation*}
Since $\beta \theta \leq \alpha$ and $\opS$ is {$({1}/{2})$-{cocoercive}},
\begin{equation*}
\begin{split}
&\beta \theta^2 \mathop{\expec}_{X,Y} \left[ \Mnorm{ \opS X - \opS Y }^2\right] \leq \alpha \theta \mathop{\expec}_{X,Y} \left[ \Mnorm{    \opS X - \opS Y }^2\right]
 \leq 2\alpha \theta \mathop{\expec}_{X,Y} \left[ \left<X-Y,  \opS X - \opS Y \right>_M\right].
\end{split}
\end{equation*}
Thus, we can reach the conclusion
\[
\mathop{\expec}_{\ve,X,Y} \left[ \Mnorm{  \opT_{\ve} X - \opT_{\ve} Y }^2\right] 
  \leq  \mathop{\expec}_{X,Y} \left[ \Mnorm{  X-Y }^2\right].
\]
\end{proof}
\begin{proof} [Proof of \cref{lem:L2_bound}]
    First, substitute $\opT_\ve = \opI - \theta \opS_\ve$ and $\Bar{\opT} = \opI - \alpha \theta \opS$ at the expectation 
    $\expec_{\ve }\left[ \Mnorm{\opT_{\ve} x - \Bar{\opT}z }^2\right]$. 

    \begin{equation*}
        \begin{split}
            &\expec \left[ \Mnorm{\opT_{\ve} x - \Bar{\opT}z }^2\right] 
            =\expec \left[ \Mnorm{x-z - \theta \left( \opS_{\ve} x - \alpha \opS z \right) }^2\right] \\
            & \hspace{0.5cm} = \Mnorm{x-z}^2 
            + \theta^2 \expec \left[ \Mnorm{  \opS_{\ve} x - \alpha \opS z  }^2\right]
            -2\theta  \expec \left[ \left< x-z,  \opS_{\ve} x - \alpha \opS z  \right>_M\right] \\
            & \hspace{0.5cm} = \Mnorm{x-z}^2 
            + \theta^2 \expec \left[ \Mnorm{  \opS_{\ve} x - \alpha \opS z  }^2\right]
            -2 \alpha \theta   \left< x-z,  \opS x -  \opS z  \right>_M. 
        \end{split}
    \end{equation*}
    Then, use {$({1}/{2})$-{cocoercive}} property of the operator $\opS$. 
    \begin{equation*}
        \begin{split}
            \expec \left[ \Mnorm{\opT_{\ve} x - \Bar{\opT}z }^2\right] 
            &\leq \Mnorm{x-z}^2 
            + \theta^2 \expec \left[ \Mnorm{  \opS_{\ve} x - \alpha \opS z  }^2\right]
            - \alpha \theta   \Mnorm{\opS x - \opS z}^2. 
        \end{split}
    \end{equation*}
    Finally, apply an inequality 
    \begin{equation*}
        \begin{split}
            &\expec \left[ \Mnorm{  \opS_{\ve} x - \alpha \opS z  }^2\right] 
            = \expec \left[ \Mnorm{  \left( \opS_{\ve} x -\alpha \opS x\right) + \alpha \left( \opS x - \opS z \right)  }^2\right] \\
            &\hspace{0.5cm}= \expec \left[ \Mnorm{\opS_{\ve} x -\alpha \opS x}^2 \right]
            + 2\alpha \left< \expec\left[ \opS_{\ve} x -\alpha \opS x \right], \opS x - \opS z  \right>_M
            + \alpha^2 \Mnorm{\opS x - \opS z }^2\\
            &\hspace{0.5cm}= \expec \left[ \Mnorm{\opS_{\ve} x }^2 \right] - \Mnorm{\alpha \opS x}^2
            + \alpha^2 \Mnorm{\opS x - \opS z }^2\\
            &\hspace{0.5cm}\leq \left( \beta - \alpha^2 \right) \Mnorm{\opS x}^2 + \alpha^2 \Mnorm{\opS x - \opS z}^2,
        \end{split}
    \end{equation*}
    we get the desired inequality 
    \[
\expec_{\ve }\left[ \Mnorm{\opT_{\ve} x - \Bar{\opT}z }^2\right] 
\leq \Mnorm{x-z}^2-\alpha\theta \left( 1-\alpha \theta \right) \Mnorm{\opS x - \opS z}^2 
 + \theta^2 \left( \beta - \alpha^2\right) \Mnorm{\opS x}^2.
    \]
\end{proof}

\subsection{Proofs of the lemmas in \cref{appendix:L2}} \label{appendix:4.2}
\begin{proof} [Proof of \cref{lem:bound_expec}]
Apply \cref{lem:reduce_expec} repeatedly, we get
\[
\expec \left[ \Mnorm{\opT_{\ve^k}  \dots \opT_{\ve^1} X - \opT_{\ve^k} \dots \opT_{\ve^1} Y}^2 \right]
\leq \expec \left[ \Mnorm{ X - Y}^2 \right],
\]
for arbitrary random variable $X, Y$. From Jensen's inequality, 
\[
 \Mnorm{ \expec \left[\opT_{\ve^k}  \dots \opT_{\ve^1} X - \opT_{\ve^k} \dots \opT_{\ve^1} Y \right]}^2
\leq \expec \left[ \Mnorm{ X - Y}^2 \right].
\]
Now set up $X, Y$ as 
\[
X = \opT_{\ve^0} x^0, \quad Y = x^0.
\]
Then as a result, we have an inequality
\[
\Mnorm{ \expec \left[\opT_{\ve^k}  \dots \opT_{\ve^1} \opT_{\ve^0} x^0 - \opT_{\ve^k} \dots \opT_{\ve^1} x^0 \right]}^2
\leq \expec \left[ \Mnorm{ \theta \opS_{\ve^0} x^0}^2 \right] \leq  \beta \Mnorm{\theta \opS x^0}^2.
\]
Since $\ve_0, \ve_1, \dots , \ve_n$ are independent and identically distributed, the following equivalence holds.
\[
\expec \left[\opT_{\ve^k}  \dots \opT_{\ve^1} x^0 \right]
=\expec \left[ \opT_{\ve^ {k-1}} \dots \opT_{\ve^0} x^0 \right].
\]
This equality gives the conclusion
\begin{equation*}
    \begin{split}
        &\Mnorm{ \alpha \expec \left[\theta \opS \opT_{\ve^ {k-1}} \dots \opT_{\ve^1} \opT_{\ve^0} x^0  \right]}\\
&\hspace{0.5cm}=\Mnorm{  \expec \left[\left( \opI - \Bar{\opT}\right) \opT_{\ve^ {k-1}} \dots \opT_{\ve^1} \opT_{\ve^0} x^0  \right]} \\
&\hspace{0.5cm}=\Mnorm{  \expec \left[\left( \opI - \opT_{\ve^k}\right) \opT_{\ve^ {k-1}} \dots \opT_{\ve^1} \opT_{\ve^0} x^0  \right]} \\
&\hspace{0.5cm}= \Mnorm{ \expec \left[\opT_{\ve^k}  \dots \opT_{\ve^1} \opT_{\ve^0} x^0 - \opT_{\ve^{k-1}} \dots \opT_{\ve^0} x^0 \right]}\\
&\hspace{0.5cm}= \Mnorm{ \expec \left[\opT_{\ve^k}  \dots \opT_{\ve^1} \opT_{\ve^0} x^0 - \opT_{\ve^k} \dots \opT_{\ve^1} x^0 \right]}\leq  \beta^{1/2}  \Mnorm{\theta \opS x^0}.
    \end{split}
\end{equation*}
\end{proof}

\begin{proof}[Proof of \cref{lem:bound_FPI}]
    All we need to prove is,
    \[
\Mnorm{\opS \opT z} \leq \Mnorm{\opS z}, \qquad \forall z \in \cH.
    \] 
    From $\opS$ being {$({1}/{2})$-{cocoercive}} operator,
    \[
2 \left< \opS \opT z - \opS z, \opT z - z \right>_M\geq  \Mnorm{\opS \opT z - \opS z}^2.
    \]
    With $\opT z - z = -\theta \opS z$, we get
    \[
 \theta \left< \opS \opT z - \opS z, - \opS z - \opS \opT z \right>_M\geq (1-\theta) \Mnorm{\opS \opT z - \opS z}^2\geq 0,
    \]
    which is equivalent to 
        \[
\Mnorm{\opS \opT z}^2 \leq \Mnorm{\opS z}^2.
    \] 
\end{proof}

\section{Omitted proofs of \cref{sec:variance}} \label{appendix:variance}
\subsection{Proofs of the lemmas in \cref{sec:variance}}\label{appendix:variance.1}
\begin{proof} [Proof of \cref{lem:e.1}]
Since the inequality holds if  $\bv=0$, let's assume that $\bv\neq 0$.

Define $x$ as $y-z$, $u$ as $\opS y - \opS z$. Since $u \in C_\delta$, there exist $\phi \in \left[ \delta, {\pi}/{2} \right]$ such that 
\[
   \left< \bv,u\right>_M= \sin\phi \Mnorm{\bv} \Mnorm{u}.
\]
Due to cocoersivity of $\opS$, $x$ and $u$ must satisfy 
\[
 \left< x,u \right>_M = \left< y-z, \opS y-\opS z \right>_M \geq 0.
\]
Since $\bv$ is nonzero, decompose $x$ and $u$ as
\[
x = \left<x,\bv\right>_M\frac{\bv}{\Mnorm{\bv}^2} + \bv_x^\perp, \quad u = \left<u,\bv\right>_M\frac{\bv}{\Mnorm{\bv}^2} + \bv_u^\perp.
\]
Both $\bv_x^\perp$ and $\bv_u^\perp$ are orthogonal to $\bv$ and 
\[
\Mnorm{\bv_u^\perp} = \cos \phi \Mnorm{u},
\quad \Mnorm{\bv_x^\perp}^2 =  \Mnorm{x}^2 - \left(\frac{\left< x,\bv\right>_M}{ \Mnorm{\bv}}\right)^2.
\]Compute $\left< x,u \right>$ using the decomposition above,
\begin{equation*}
\begin{split}
\left< x,u \right>_M
&=  \frac{1}{\Mnorm{\bv}^2} \left<x,\bv\right>_M\left<u,\bv\right>_M+ \left< \bv_x^\perp , u \right>_M\\
&=  \frac{1}{\Mnorm{\bv}} \left<x,\bv\right>_M  \sin\phi \Mnorm{u}+ \left< \bv_x^\perp , \bv_u^\perp \right>_M\\
&\leq \frac{1}{\Mnorm{\bv}} \left<x,\bv\right>_M  \sin\phi \Mnorm{u}+ \cos \phi \Mnorm{u} \sqrt{ \Mnorm{x}^2 - \left(\frac{\left< x,\bv\right>}{\Mnorm{\bv}}\right)^2 }.
\end{split}
\end{equation*}

If $\left< x,-\bv\right>_M\leq 0$, then $\left< -\bv,  y- z \right>_M\leq 0 \leq \cos\delta \Mnorm{\bv} \Mnorm{ y- z}$ and the conclusion holds. Thus, consider only the case where $\left< x,-\bv\right>_M> 0$. In such case,  $ \left< x,u \right>_M \geq 0$ with $\Mnorm{u}>0$ gives
\[
0< \frac{1}{\Mnorm{\bv}} \left<x,-\bv\right>_M  \sin\phi  \leq \cos \phi  \sqrt{ \Mnorm{x}^2 - \left(\frac{\left< x,-\bv\right>_M}{\Mnorm{\bv}}\right)^2 },
\]
which leads to a conclusion by squaring each sides :
\[
\left< x,-\bv\right>_M\leq \cos\phi \Mnorm{\bv} \Mnorm{ x}\leq \cos\delta \Mnorm{\bv} \Mnorm{ x}.
\]
\end{proof}

\begin{proof} [Proof of \cref{lem:lim_orthogonal}]
Choose a point $z$ in $\cH$. To prove by contradiction, suppose that for any $l$, there exists $k_l>l$ such that 
\[
\opS y^{k_l} \in \opS z+ C_\delta, \quad \opS y^{k_l} \neq \opS z.
\]
The subsequence $y^{k_1}, y^{k_2}, y^{k_3}, \dots$ satisfies the inequality below for all $l$, due to \cref{lem:e.1}. 
\[
 \left< -\bv,  y^{k_l}- z \right>_M\leq \cos\delta \Mnorm{\bv} \Mnorm{ y^{k_l}- z}.
\]
Divide each side by $k_l$ and take a limit as $l\to \infty$. Since $\lim_{l\to \infty} {y^{k_l}}/{ k_l} = -\gamma \bv$ strongly,
\[
 \gamma \Mnorm{\bv}^2 = \left< -\bv,  -\gamma \bv \right>_M\leq \cos\delta \Mnorm{\bv} \Mnorm{ -\gamma \bv} < \gamma \Mnorm{\bv}^2,
\]
which yields a contradiction. 

Thus, when $z$ is given, for any $\delta \in \left(0, {\pi}/{2}\right)$, there exist a $N_{\delta, z}$ such that for all $k>N_{\delta, z}$, it is either $\opS y^k = \opS z$ or $\opS y^k \notin \opS z + C_\delta$. As a conclusion, for all $k>N_{\delta, z}$,
\[
\left<\bv, \opS y^k - \opS z \right>_M\leq  \sin\delta \Mnorm{\bv} \Mnorm{\opS y^k - \opS z}.
\]
\end{proof}

\subsection{Detailed computation of \cref{eq:5.1}} \label{appendix:variance.2}
\begin{proof} [Proof of {\cref{eq:5.1}}]
    
An $({1}/{2}) \mathrm{-averaged}$ operator $\opT$ in $\reals^2$ is defined as,
\[
\opT\colon (x,y) \mapsto \left( x-\frac{1+x-y}{2} , y-\frac{1+y-x}{2} \right),
\]
with the infimal displacement vector $\bv$ of $\range{\opI-\opT}$ as $\left({1}/{2}, {1}/{2}\right)$. The {RC-FPI} by $\opT$ with the distribution as a uniform distribution on $\left\{ (1,0), (0,1) \right\}$.  The random coordinate operators are respectively,

\[
\opT_{(1,0)}: (x,y) \mapsto \left( x-\frac{1+x-y}{2} , y \right),
\]
\[
\opT_{(0,1)}: (x,y) \mapsto \left( x, y-\frac{1+y-x}{2} \right).
\]

When we set the initial point $\left( x^0, y^0\right)$ as the origin, from the relations 
\begin{equation*}
\begin{split}
&\expec\left[x^ {k+1}\right] =  \expec\left[ x^k\right] -\frac{1}{4} - \frac{1}{4}\expec\left[ x^k-y^k\right] , \\
&\expec\left[y^ {k+1}\right] =  \expec\left[y^k\right] -\frac{1}{4} + \frac{1}{4}\expec\left[ x^k-y^k\right] , \\
&\expec\left[x^ {k+1}-y^{k+1}\right] =  \frac{1}{2}\expec\left[ x^k-y^k\right] ,
\end{split}
\end{equation*}
each expectations have a value of $\expec\left[x^ {k}\right] = \expec\left[y^ {k}\right]=- k/4$. 

Next, an expectation $\expec\left[\left({ x^k - y^ {k}}\right)^2\right]$ has a recurrence relation of
\begin{equation*}
\begin{split}
\expec\left[\left({x^ {k+1} - y^ {k+1}}\right)^2\right] 
&= \expec_{\left(  x^k, y^k\right)}
\expec\left[\left({x^ {k+1} - y^ {k+1}}\right)^2 \mid  \left(  x^k, y^k\right) \right] \\
&= \expec_{\left(  x^k, y^k\right)}
\left[  \frac{1}{4} \left(  x^k - y^k \right)^2 + \frac{1}{4}  \right] ,\\
\end{split}
\end{equation*}
which obtains $\expec \left[ \left({  x^k - y^k }\right)^2 \right] = \left( 1-4^{-k}\right) / 3$ as a solution. 

Finally, an expectation $\expec\left[\left({ x^k}\right)^2 + \left({y^k}\right)^2 \right]$ has a relation
\begin{equation*}
\begin{split}
\expec\left[\left({x^ {k+1}}\right)^2 + \left({y^ {k+1}}\right)^2 \right] 
&=\expec_{\left(  x^k, y^k\right)} 
\left[\frac{1}{2} 
\left( \norm{\opT_{(1,0)} \left(  x^k, y^k\right) }^2 + \norm{\opT_{(0,1)} {\left(  x^k, y^k\right) }}^2 \right) 
\right]\\
&= \expec\left[\left({ x^k}\right)^2 + \left({y^k}\right)^2 \right] + \frac{1}{4} -\frac{1}{2} \expec\left[{ x^k + y^ {k}}\right] -  \frac{1}{4} \expec\left[\left({ x^k - y^ {k}}\right)^2\right],
\end{split}
\end{equation*}
which can be applied inductively, and as a result, 
\[
\expec\left[\norm{ \left(x^k, y^k\right)}^2  \right] = \frac{1}{8}k^2 + \frac{1}{24}k + \frac{1}{9}\left( 1- 4^{-k}\right).
\]

From above computations, a variance of $\left(  x^k, y^k\right)$ can be estimated explicitly as 
\[
\mvar\left(  x^k, y^k\right) =  \frac{1}{24}k + \frac{1}{9}\left( 1- 4^{-k}\right).
\]
Thus, $\limsup_{k\to\infty} k\mvar\left( (x^k, y^k) / k \right) $ is
\[
\limsup_{k\to\infty} k\mvar\left(\frac{\left(  x^k, y^k \right) } {k}\right) = \frac{1}{24},
\]
\end{proof}

\section{Omitted proofs of \cref{sec:decent_opt}}
\subsection{Proofs of the lemmas in \cref{subsec:PGEXTRA}} \label{appendix:pgextra}
\begin{proof} [Proof of \cref{lem:v_pgextra}]

Recall that the {PG-EXTRA} originated from {Condat-V\~u}, an {FPI} with  
\begin{equation*}
\opT \begin{bmatrix}
    \vx \\
    \vu
\end{bmatrix}
=
\begin{bmatrix}
    \prox_{\tau  f} (W\vx - \tau  U \vu ) \\
     \vu + \frac{1}{\tau } U \vx
\end{bmatrix}
\end{equation*}
which is a non-expansive mapping in $M$-norm, where
\begin{equation*}
    \begin{split}
        M
        = 
        \begin{bmatrix}
        \frac{1}{\tau } I & U \\
        U & \tau  I
        \end{bmatrix}.
    \end{split}
\end{equation*}

Finding the infimal displacement vector of $\opT$ is equivalent to 
\[
\argmin_{\vx, \vu} \Mnorm{
\begin{bmatrix}
    \Delta \vx \\
    \Delta \vu
\end{bmatrix}
}^2,\quad
\begin{bmatrix}
    \Delta \vx \\
    \Delta \vu
\end{bmatrix}
= \left( \opI - \opT \right)
\begin{bmatrix}
     \vx \\
     \vu
\end{bmatrix}
=
\begin{bmatrix}
    \vx - \prox_{\tau  f} (W\vx - \tau  U \vu ) \\
    - \frac{1}{\tau } U \vx
\end{bmatrix}.
\]
From $\Delta \vu =  - \frac{1}{\tau } U \vx$,
\begin{equation*}
    \begin{split}
    \Mnorm{
        \begin{bmatrix}
    \Delta \vx \\
    \Delta \vu
\end{bmatrix}
}^2 
&= \frac{1}{\tau }\norm{\Delta \vx}^2 + \tau  \norm{\Delta \vu}^2 + 2 \tr \left( \Delta \vx^T U \Delta \vu \right) \\
&= \frac{1}{\tau }\norm{\Delta \vx}^2 + \frac{1}{\tau } \norm{U \vx}^2 - \frac{2}{\tau } \tr \left( \Delta \vx^T U^2 \vx \right) \\
&= \frac{1}{\tau } \left[ \norm{\Delta \vx}^2 + \frac{1}{2}\tr \left( \vx^T (I-W) \vx \right) - \tr \left( \Delta \vx^T (I-W) \vx \right) \right]. \\
    \end{split}
\end{equation*}
When $\Delta \vx = \vx_\vC + \vx_\perp$, where $\vx_\vC = \mathbf{1}\Tilde{x}^T$ for some $\Tilde{x}\in \RR^d$ and $\mathbf{1}^T\vx_\perp = \mathbf{0}$, we have
\begin{equation*}
    \begin{split}
        & \Delta \vx = \vx - \prox_{\tau  f} (W\vx - \tau  U \vu ) \\
        & \Leftrightarrow   \vx_\vC = \left(\vx -  \vx_\perp\right) - \prox_{\tau  f} (W\left( \vx -  \vx_\perp \right) - \left( \tau  U \vu - W\vx_\perp\right) ).
    \end{split}
\end{equation*}
Since {$\range(U) = \ker(U)^\perp = \text{span}(\mathbf{1})^\perp$ from $\ker(U) = \text{span}(\mathbf{1})$ and $U$ being symmetry,}
\[
\left\{ \tau  U \vu: \vu \in \RR^{m \times d} \right\} = \left\{ \vw \in \RR^{m \times d} : \mathbf{1}^T \vw = \mathbf{0}  \right\}, \quad \mathbf{1}^T W\vx_\perp = \mathbf{1}^T \vx_\perp = \mathbf{0},
\]
we have $\tau  U \vu - W\vx_\perp = \tau  U \Tilde{\vu}$ for some $\Tilde{\vu}$. Thus, 
\begin{align*}
    \begin{bmatrix}
    \Delta \left( \vx -  \vx_\perp \right) \\
    \Delta \Tilde{\vu}
\end{bmatrix}
= \begin{bmatrix}
     \vx_\vC \\
    - \frac{1}{\tau } U \left( \vx -  \vx_\perp \right)
\end{bmatrix},
\end{align*}
and its $M$-norm is
\begin{align*}
\Mnorm{\begin{bmatrix}
    \Delta \left( \vx -  \vx_\perp \right) \\
    \Delta \Tilde{\vu}
\end{bmatrix}}^2 = \frac{1}{\tau } \left[  \norm{\vx_\vC}^2 + 
\frac{1}{2}\tr \left( \left( \vx -  \vx_\perp \right)^T (I-W) \left( \vx -  \vx_\perp \right) \right) \right].
\end{align*}

Due to the inequality $\norm{\vx_\perp}^2 \geq \tr \left( {\vx_\perp}^T (I-W) \vx_\perp \right)$ with equality only when $\vx_\perp = \mathbf{0}$,
\begin{equation*}
    \begin{split}
    \Mnorm{
        \begin{bmatrix}
    \Delta \vx \\
    \Delta \vu
\end{bmatrix}
}^2 
&= \frac{1}{\tau } \left[ \norm{\Delta \vx}^2 + \frac{1}{2}\tr \left( \vx^T (I-W) \vx \right) - \tr \left( \Delta \vx^T (I-W) \vx \right) \right] \\
&= \frac{1}{\tau } \left[ \norm{\vx_\vC}^2 + \norm{\vx_\perp}^2 + \frac{1}{2}\tr \left(  \vx ^T (I-W) \vx \right) - \tr \left(  {\vx_\perp}^T (I-W) \vx \right) \right] \\
& \geq \Mnorm{
\begin{bmatrix}
    \Delta \left( \vx -  \vx_\perp \right) \\
    \Delta \Tilde{\vu}
\end{bmatrix}
}^2 + \frac{1}{2\tau}\norm{\vx_\perp}^2\\
&\geq \Mnorm{
\begin{bmatrix}
    \Delta \left( \vx -  \vx_\perp \right) \\
    \Delta \Tilde{\vu}
\end{bmatrix}
}^2,
    \end{split}
\end{equation*}
with equality only when $\vx_\perp = \mathbf{0}$. Thus, the infimal displacement vector $\Tilde{\bv}$ of {Condat-V\~u} follows a form of 
\[
\Tilde{\bv} = \begin{bmatrix}
    \vv_x \\
    \vv_u
\end{bmatrix},
\quad \vv_x = \mathbf{1} \Tilde{x}^T,
\]
for some $\Tilde{x} \in \RR^d$. Now we may consider only the case where $\Delta \vx = \mathbf{1} x^T$. In this case, 
\[
\Mnorm{
        \begin{bmatrix}
    \Delta \vx \\
    \Delta \vu
\end{bmatrix}
}^2 
=\frac{1}{\tau } \left[ \norm{\Delta \vx}^2 + \frac{1}{2}\tr \left( \vx^T (I-W) \vx \right) \right],
\]
and the relation
\[
\mathbf{1} x^T = \vx - \prox_{\tau  f} (W\vx - \tau  U \vu ) 
\]
must hold. This relation is equivalent to 
\[
0\in \tau  \partial f(\vx - \mathbf{1} x^T) + \vx - \mathbf{1} x^T - W\vx + \tau  U \vu.
\]

By taking direction of $\mathbf{1}$  to consideration,
\[
0\in \tau  \mathbf{1}^T \partial f(\vx - \mathbf{1} x^T) + \mathbf{1}^T \vx - m x^T -\mathbf{1}^T \vx.
\]
When we set the new variable $\vy = \vx-\mathbf{1}x^T$, $x^T$ is expressed as
\[
x^T \in \tau  \frac{1}{m}\mathbf{1}^T \partial f(\vy), 
\]
which makes for some $\vg \in \partial f(\vy)$,
\begin{equation*}
    \begin{split} 
\Mnorm{
        \begin{bmatrix}
    \Delta \vx \\
    \Delta \vu
\end{bmatrix}
}^2 
&=\frac{1}{\tau } \left[ \norm{\Delta \vx}^2 + \frac{1}{2}\tr \left( \vy^T (I-W) \vy \right) \right] \\
&=\frac{1}{\tau } \left[ \tau ^2 \frac{1}{m} \norm{\mathbf{1}^T \vg}^2 + \frac{1}{2}\tr \left( \vy^T (I-W) \vy \right) \right] .
    \end{split}
\end{equation*}

Thus, the infimal displacement vector of {Condat-V\~u} is
\[
\Tilde{\bv} = 
\begin{bmatrix}
     \tau   \frac{1}{m}\mathbf{1} \mathbf{1}^T \vg \\
    -\frac{1}{\tau } U \vy
\end{bmatrix},
\]
where $\vy$ and $\vg$ are
\[
 \underset{
 \begin{subarray}{c}
  \vy \in \RR^{m \times d}\\
  \vg \in \partial f(\vy)
  \end{subarray}
 }{\argmin} ~   \left[ \tau ^2 \frac{1}{m} \norm{\mathbf{1}^T \vg}^2 + \frac{1}{2}\tr \left( \vy^T (I-W) \vy \right) \right].
\]

{Furthermore, from $\sum_{i=1}^m W_{i,j} = \sum_{j=1}^m W_{i,j} = 1$,
\begin{align*}
\tr(\vy^T (I-W) \vy) &= \sum_{i=1}^m \norm{y_i}^2 - \sum_{i,j=1}^m W_{i,j} \left< y_i, y_j\right> \\
&= \sum_{i=1}^m \norm{y_i}^2 + \frac{1}{2}\sum_{i,j=1}^m W_{i,j} \left(\norm{y_i - y_j}^2 - \norm{y_i}^2 - \norm{y_j}^2\right)\\
&= \frac{1}{2}\sum_{i,j=1}^m W_{i,j} ||y_i - y_j||^2.
\end{align*}}

As a conclusion, {with $\vw^k = \tau U \vu^k$, }the infimal displacement vector of \eqref{iter:PG-EXTRA} is,
 \[
\bv_i = 
\begin{bmatrix}
    \frac{\tau }{m} \sum_{j=1}^m g_j \\
    -\frac{1}{2} \left( y_i - \sum_{j=1}^m W_{ij}y_j \right)
\end{bmatrix}
\]
for $i=1,\dots,m$, where $\left( y_1, y_2, \dots , y_m\right)$ and $\left( g_1, g_2, \dots , g_m\right)$ are
\[
 \underset{
 \begin{subarray}{c}
 y_1, y_2, \dots y_m \in \RR^{d} \\
  g_j \in \partial f_j(y_j), 1\leq j \leq m
  \end{subarray}
  }{\argmin} ~    \frac{\tau^2}{m}\norm{ \sum_{j=1}^m g_j}^2 + \frac{1}{4} \sum_{i,j=1}^m W_{ij} \norm{y_i - y_j}^2.
\]
\end{proof}

\subsection{Detailed computation of \cref{sec:exp}} \label{appendix:infdistvec}

\begin{proof} [Calculation of the infimal displacement vector]
    From \cref{lem:v_pgextra}, the infimal displacement vector of \eqref{iter:PG-EXTRA} is
\[
\bv_i = 
\begin{bmatrix}
    \frac{\tau }{m} \sum_{j=1}^m g_j \\
    -\frac{1}{2} \left( y_i - \sum_{j=1}^m W_{ij}y_j \right)
\end{bmatrix},
\]
where $\left( y_1, y_2, \dots , y_m\right)$ and $\left( g_1, g_2, \dots , g_m\right)$ are
\[
 \underset{
 \begin{subarray}{c}
 y_1, y_2, \dots y_m \in \RR^{d} \\
  g_j \in \partial f_j(y_j), 1\leq j \leq m
  \end{subarray}
  }{\argmin} ~   \frac{\tau^2}{m} \norm{ \sum_{j=1}^m g_j}^2 + \frac{1}{4} \sum_{i,j=1}^m W_{ij} \norm{y_i - y_j}^2.
\]
Note that the subgradient of the indicator function is the normal cone operator
\[
\partial \delta_C (x) = \opN_C = \begin{cases}
    \emptyset & \text{if $z\notin C$} \\
    \left\{ y \mid \left< y,z-x\right> \leq 0, \forall z \in C\right\}& \text{if $z\in C$}.
\end{cases}
\]
Thus, with the choice $g_j = 0$, the problem of $\left( y_1, y_2, \dots , y_m\right)$ is equivalent to
\[
 \left( y_1, y_2, \dots , y_m\right) = 
 \underset{y_1 \in C_1, y_2, \dots y_m \in C_2}{\argmin} ~ \sum_{i,j=1}^m W_{ij} \norm{y_i - y_j}^2.
\]

Since
\begin{equation*}
    \begin{split}
        &\sum_{i,j=1}^m W_{ij} \norm{y_i - y_j}^2 \\
        &\hspace{0.5cm}= \frac{1}{m-1 + \epsilon } \norm{y_1 - y_2}^2 
        +\sum_{j>2}^m \frac{1}{m-1 + \epsilon } \norm{y_2 - y_j}^2 
        +\sum_{i,j>2, i\neq j}^m \frac{1}{m-2 + \epsilon } \norm{y_i - y_j}^2,
    \end{split}
\end{equation*}
$\left( y_1, y_2, \dots , y_m\right)$ take value of $y_2 = y_3 =\dots = y_m$ with
\[
\left( y_1, y_2\right) = \underset{y_1 \in C_1, y_2 \in C_2}{\argmin} ~  \norm{y_1 - y_2}^2.
\]
Now chose $g_j=0$ for each $j$,
\[
\bv_i = 
\begin{bmatrix}
    \mathbf{0} \\
    -\frac{1}{2} \left( y_i - \sum_{j=1}^m W_{ij}y_j \right)
\end{bmatrix}
= 
\begin{bmatrix}
    \mathbf{0} \\
    -\frac{1}{2} \sum_{j\neq i}^m W_{ij}\left( y_i - y_j \right)
\end{bmatrix}.
\]
With  $y_2 = y_3 =\dots = y_m$,
\[
\bv_1 = 
\begin{bmatrix}
    \mathbf{0} \\
     \frac{1}{2\left(m-1 + \epsilon\right) } \left( y_1 - y_2 \right)
\end{bmatrix}, \quad
\bv_2 = 
\begin{bmatrix}
    \mathbf{0} \\
    \frac{1}{2\left(m-1 + \epsilon \right) } \left( y_2 - y_1 \right)
\end{bmatrix}, \quad
\bv_i = 
\begin{bmatrix}
    \mathbf{0} \\
    \mathbf{0}
\end{bmatrix}, \quad i> 2.
\]

Thus, the infimal displacement vector is 
\begin{equation*}
\bv_i = \frac{b_i}{2(m-1+\epsilon)} \begin{bmatrix} \mathbf{0} \\u \end{bmatrix}, \quad 
b_i = \begin{cases}
  1& \text{if $i=1$} \\
  -1 &  \text{if $i = 2$} \\
  0 & \text{if $i>2$},
\end{cases}
\end{equation*}
with 
\[
u=  \begin{array}{ll}
\underset{u\in \overline{ \left\{ u_1 - u_2 \middle| u_1 \in C_1, u_2 \in C_2 \right\}}}{\mbox{argmin}}
&\displaystyle{
\norm{u}}.
\end{array}
\]
\end{proof}

\bibliographystyle{./elsarticle-num-names} 
\bibliography{./main}

\end{document}